\theoremstyle{plain} 
\newtheorem{lemma}{Lemma}
\newtheorem{proposition}{Proposition}
\newtheorem{theorem}{Theorem} 
\newtheorem{corollary}{Corollary}
\theoremstyle{definition}
\newtheorem{example}{Example}
\newtheorem{assumption}{Assumption}
\theoremstyle{remark}
\newtheorem{remark}{Remark}
\let\brack\undefined 
\DeclarePairedDelimiter{\brack}{\lbrack}{\rbrack} 
\let\brace\undefined 
\DeclarePairedDelimiter{\brace}{\lbrace}{\rbrace}
\DeclarePairedDelimiter{\paren}{\lparen}{\rparen}
\DeclarePairedDelimiter{\abs}{\lvert}{\rvert}
\newcommand{\defeq}{\vcentcolon=}
\newcommand{\eps}{\varepsilon}
\newcommand{\N}{\mathbb{N}}
\newcommand{\R}{\mathbb{R}}
\DeclareMathOperator*{\argmin}{\arg\!\min}
\DeclarePairedDelimiter{\floor}{\lfloor}{\rfloor}
\DeclarePairedDelimiter{\ceil}{\lceil}{\rceil}
\DeclarePairedDelimiter{\norm}{\lVert}{\rVert}
\DeclarePairedDelimiterX{\inp}[2]{\langle}{\rangle}{#1, #2} 
\DeclareMathOperator{\Tr}{Tr}
\DeclareMathOperator{\Span}{span}
\DeclareMathOperator{\Prob}{P}
\DeclareMathOperator{\Exp}{E}
\newcommand{\iid}{i.i.d.\@\xspace}
\DeclareMathOperator{\St}{St}
\DeclareMathOperator{\Gr}{Gr}
\DeclareMathOperator{\Expo}{Exp}
\DeclareMathOperator{\Log}{Log}
\DeclareMathOperator{\col}{col}
\newcommand{\grad}{\operatorname{grad}}
\newcommand{\Hess}{\operatorname{Hess}}
\newcommand{\opnorm}[1]{\norm{#1}_{\mathrm{op}}}
\newcommand{\lift}{\operatorname{lift}}
\newcommand{\dist}{\operatorname{dist}}
\DeclareMathOperator{\Cov}{Cov}
\newcommand{\Xtilde}{\widetilde{X}}
\title{A Geometric Analysis of PCA}
\author{
  Ayoub El Hanchi \\
  University of Toronto \& \\
  Vector Institute \\
  \texttt{aelhan@cs.toronto.edu}
  \And
  Murat A. Erdogdu \\
  University of Toronto \& \\
  Vector Institute \\
  \texttt{erdogdu@cs.toronto.edu}
  \And 
  Chris J. Maddison \\
  University of Toronto \& \\
  Vector Institute \\
  \texttt{cmaddis@cs.toronto.edu} 
}
\begin{document}

\maketitle

\begin{abstract}
What property of the data distribution determines the excess risk of principal component analysis? In this paper, we provide a precise answer to this question. We establish a central limit theorem for the error of the principal subspace estimated by PCA, and derive the asymptotic distribution of its excess risk under the reconstruction loss. We obtain a non-asymptotic upper bound on the excess risk of PCA that recovers, in the large sample limit, our asymptotic characterization. Underlying our contributions is the following result: we prove that the negative block Rayleigh quotient, defined on the Grassmannian, is generalized self-concordant along geodesics emanating from its minimizer of maximum rotation less than $\pi/4$.
\end{abstract}

\section{Introduction}
\label{sec:introduction}
Principal Component Analysis (PCA) is a core method of machine learning and statistics, prized for its simplicity and consistently strong empirical performance across diverse tasks. In contrast, analyzing its theoretical performance is challenging: an explicit characterization of its asymptotic excess risk is known for the special case of Gaussian data \cite{reiss2020nonasymptotic}, while non-asymptotic results on the excess risk are in general limited to upper bounds \cite{shawe2005eigenspectrum,blanchard2007statistical,nadler2008finite,reiss2020nonasymptotic}. 

Traditionally, two main approaches have been adopted to analyze PCA. In the first, it is treated as a plug-in estimator: the empirical covariance replaces the population one, and its principal components estimate the true ones. Matrix perturbation bounds \cite{stewart1990matrix}, most famously the Davis-Kahan theorem \cite{davis1970rotation,yu2015useful}, are then used to control the error of PCA by the deviation of the empirical covariance from the population one. In the second approach, adopted in \cite{shawe2005eigenspectrum,blanchard2007statistical}, PCA is viewed as an instance of empirical risk minimization, for which variants of the uniform convergence analysis apply. Unfortunately, neither of these approaches leads to provably accurate bounds.

The recent work of \citet{reiss2020nonasymptotic} takes a different approach and analyzes the projector found by PCA directly, building on earlier work of \citet{dauxois1982asymptotic} who established its asymptotic normality. The excess risk bounds obtained therein are powerful enough that under Gaussian data and certain eigenvalue decay assumptions, they recover the leading term in the exact asymptotic expansion of the excess risk. In the general case, however, asymptotically tight bounds do not appear to be available in the existing literature.


In this paper, we take a different approach that allows us, among other things, to obtain such bounds: we view PCA as an M-estimator, and use tools from the theory of asymptotic statistics \cite{van2000asymptotic}, and its accompanying non-asymptotic theory \cite[e.g.][]{ostrovskii2021finite}, to analyze its performance. From this point of view, PCA is similar to linear regression with ordinary least squares. A significant difference, and a major source of difficulty, resides in the nature of their respective search spaces: for linear regression, it is $\R^{d}$, whereas for PCA, it is the manifold of $k$-dimensional subspaces of $\R^{d}$ - the Grassmannian \cite[e.g.][]{edelman1998geometry}. We build extensively on the accessible expositions in \cite{boumal2023introduction,bendokat2024grassmann} to overcome this difficulty.

Our contributions are as follows. In Theorem \ref{thm:asymptotics}, we establish a central limit theorem for the error of the principal subspace obtained by PCA, and use this result to obtain the asymptotic distribution of its excess risk under the reconstruction loss, all under a necessary moment assumption. We then establish, in Theorem \ref{thm:finite}, a non-asymptotic upper bound on the excess risk that, in the large-sample limit, accurately recovers our asymptotic characterization.
At the heart of our analysis is the following key result (Proposition \ref{prop:self-concordance}): we prove  that the reconstruction risk is generalized self-concordant — in a sense analogous to the one introduced by \citet{bach2010self} in his analysis of logistic regression - when restricted to geodesics originating from its global minimizer of maximum rotation less than $\pi/4$.

The rest of the paper is organized as follows. In Section \ref{sec:setup} we formalize our problem and provide an overview of the Grassmann manifold, restricting ourselves to the objects needed to state our theorems. In Section \ref{sec:asymptotics} we characterize the asymptotic performance of PCA. In Section \ref{sec:self-concordance} we establish the self-concordance of the reconstruction risk. In Section \ref{sec:finite} we provide a non-asymptotic bound on the error of PCA, and conclude with a discussion in Section \ref{sec:discussion}. Proofs of our statements are provided in the Appendix.


\section{Problem setup \& Background}
\label{sec:setup}
The goal of linear dimensionality reduction is to project high-dimensional data onto a lower dimensional subspace while preserving as much information about the original data as possible. Specifically, given \iid data points $(X_i)_{i=1}^{n}$ in $\R^{d}$ and a choice of dimension $k \in [d]$, PCA finds an orthogonal projector $UU^T \in \R^{d \times d}$ onto a $k$-dimensional subspace of $\R^d$ such that the following empirical reconstruction error is as small as possible
\begin{equation}
\label{eq:empirical_risk}
    \widetilde{R}_{n}(U) \defeq \frac{1}{2n}\sum_{i=1}^{n} \norm{X_i - UU^TX_i}_2^2.
\end{equation}
Here $U \in \R^{d \times k}$ is a matrix whose columns form an orthonormal basis of the aforementioned subspace. Denote by $\Sigma_{n} \defeq n^{-1} \sum_{i=1}^{n}X_iX_i^T$ the empirical covariance matrix, and fix an orthonormal basis $(u_{n, j})_{j=1}^{d}$ of eigenvectors of $\Sigma_{n}$, ordered non-increasingly according to their eigenvalues, with ties broken arbitrarily. The $d \times k$ matrix $U_{n}$ whose $j$-th column is $u_{n,j}$ is a minimizer of (\ref{eq:empirical_risk}).

Typically, however, we care about the population reconstruction error of this projector. If $X$ is a random vector with the same distribution as that of the data points, this error is given by
\begin{equation}
\label{eq:population_risk}
    \widetilde{R}(U) \defeq \frac{1}{2}\Exp\brack{\norm{X - UU^TX}_2^2}.
\end{equation}
Denote by $\Sigma \defeq \Exp\brack{XX^T}$ the population covariance matrix, and fix an orthonormal basis $(u_j)_{j=1}^{d}$ of eigenvectors of $\Sigma$, ordered non-increasingly according to their eigenvalues $(\lambda_j)_{j=1}^{d}$, with ties broken arbitrarily. The $d \times k$ matrix $U_{*}$ whose $j$-th column is $u_j$ is a minimizer of (\ref{eq:population_risk}).

\paragraph{A redundancy in the parametrization.} The analysis of PCA is complicated by the fact that the map $U \mapsto UU^T$ is a redundant parametrization of the set of orthogonal projectors. Fortunately, this redundancy is well-structured: two such matrices $U, V$ represent the same projector (i.e. $UU^T = VV^T$) if and only if there exists an orthogonal matrix $Q \in \R^{k \times k}$ such that $V = UQ$. This defines an equivalence relation $\sim$ on the set $\St(d, k) \defeq \brace*{U \in \R^{d \times k} \mid U^TU = I_{k}}$. The space of equivalence classes under this relation is known as the Grassmann manifold $\Gr(d, k) \defeq \St(d, k)/\sim$. We denote a generic element in this space by $[U]$. 

To gain some intuition about this abstract set, note that every element of $\Gr(d, k)$ can be  identified with a $k$-dimensional subspace of $\R^{d}$ through the map that sends $[U]$ to the column space of $U$. Therefore for the sake of intuition we can think of $[U]$ as the column space of $U$. In the special case of $\Gr(3, 2)$, which we will use as a running example, we can visualize $[U]$ as a plane passing through the origin embedded in 3 dimensions.

Equipped with this new space, we define our final population and empirical risks by $R([U]) \defeq \widetilde{R}(U)$ and $R_{n}([U]) \defeq \widetilde{R}_{n}(U)$, and note that, with the definitions of $U_{n}$ and $U_{*}$ given above,
\begin{equation}
\label{eq:minimizers}
    [U_{*}] \in \argmin_{[U] \in \Gr(d, k)} R([U]), \quad\quad [U_{n}] \in \argmin_{[U] \in \Gr(d, k)} R_{n}([U]).
\end{equation}
To be precise, we will call PCA the abstract procedure that takes as input $(X_i)_{i=1}^{n}$ and outputs the subspace $[U_{n}]$, even though in practice it returns the representative $U_{n}$. In this paper, our goal is to understand the performance of this procedure in terms of the distribution of $X$, as measured both by how close $[U_{n}]$ is to $[U_{*}]$, and by the excess risk $R([U_{n}]) - R([U_{*}])$.

\subsection{Background on the Grassmann manifold}
\label{subsec:grassmann}
Our analysis takes place on the Grassmannian $\Gr(d, k)$, which admits the structure of a Riemannian manifold. In this section, we give a high-level description of the objects needed to state our results. For a more rigorous yet accessible introduction, see for example \citep{bendokat2024grassmann} and \citep[Chapter 9]{boumal2023introduction}. Throughout, we let $[U], [V] \in \Gr(d, k)$ and let $U, V \in \St(d, k)$ be corresponding representatives.

\textbf{Tangent space.} The tangent space of $\Gr(d, k)$ at $[U]$, denoted by $T_{[U]}\Gr(d, k)$, is a vector space of dimension $k(d-k)$.  Our results are stated in terms of a more concrete, yet equivalent, set
\begin{equation}
\label{eq:horizontal_space}
    H_{U} \defeq \brace*{\Delta \in \R^{d \times k} \mid U^{T}\Delta = 0}.
\end{equation}
For our purposes, $H_{U}$ is easier to work with and there exists a canonical, invertible, linear map\footnote{The inverse of the differential of the quotient map $U \mapsto [U]$ at $U$ restricted to $H_{U}$.} $\lift_{U}: T_{[U]}\Gr(d, k) \to H_{U}$ that lifts tangent vectors $\xi \in T_{[U]}\Gr(d, k)$ into $H_U$. The elements of $H_U$ can be thought of concretely as ``velocities'' in the sense that the basis $U$ with ``velocity'' $\lift_{U}(\xi)$ moves infinitesimally to ``$U + \epsilon \lift_{U}(\xi)$'' while remaining an orthonormal basis. By requiring that $\lift_{U}(\xi)$ has columns in the orthogonal complement of $U$, we are guaranteed that the subspace $[U]$ changes when $U$ moves in the direction of $\lift_{U}(\xi)$.

The tangent space is equipped with an inner product $\inp{\cdot}{\cdot}_{[U]}$ known as the Riemannian metric at $[U]$. To calculate $\inp{\cdot}{\cdot}_{[U]}$, we lift tangent vectors into $H_U$ and apply the Frobenius inner product,
\begin{equation}
\label{eq:lift}
    \inp{\xi_1}{\xi_2}_{[U]} = \inp{\lift_{U}(\xi_1)}{\lift_{U}(\xi_2)}_{F},
\end{equation}
for all $\xi_1, \xi_2 \in T_{[U]}\Gr(d, k)$.

\textbf{Geodesics and the exponential map.} Let $\xi \in T_{[U]}\Gr(d, k)$. The geodesic starting at $[U]$ in the direction $\xi$ is the curve $\gamma: [0, 1] \to \Gr(d, k)$ with $\gamma(0) = [U]$ of constant velocity $\xi$. Intuitively, one may think of $\gamma(t)$ as the straight line "$[U] + t\xi$", in the sense that $\gamma$ is a curve with zero acceleration, properly defined. $\gamma(t)$ can be calculated with the SVD $\lift_{U}(\xi) = PSQ^T$\footnote{$P \in \R^{d \times k}, S \in \R^{k \times k}, Q \in \R^{k \times k}$.} via the identity 
\begin{equation}
\label{eq:geodesic}
 \gamma(t) \defeq [UQ\cos(tS)Q^T + P\sin(tS)Q^T].
\end{equation}
The exponential map at $[U]$ in the direction $\xi$ is then defined by $\Expo_{[U]}(\xi) = \gamma(1)$. Returning to our running example, $\Gr(3,2)$, these geodesics correspond to "constant velocity" rotations of the plane $[U]$ along the pitch or roll axes, as specified by the directions of the columns of $\lift_{U}(\xi)$, and the extent of rotation between $[U]$ and $\Expo_{[U]}(\xi)$ depends on the magnitudes of the columns of $\lift_{U}(\xi)$.

\textbf{Principal angles and Riemannian distance.} The $j$-th principal angle $\theta_{j}([U], [V]) \in [0, \pi/2]$ is defined by $\cos(\theta_j([U], [V])) = s_j$ where $s_j$ is the $j$-th largest singular value of $U^TV$. These angles generalize the notion of angles between lines to angles between subspaces, and they measure the magnitude of the most efficient rotation that aligns $[U]$ with $[V]$. For $\Gr(d,k)$, the principal angles give us an explicit expression for the Riemannian distance between $[U]$ and $[V]$, which, properly defined, is the length of a shortest curve connecting $[U]$ and $[V]$,
\begin{equation}
\label{eq:distance}
    \dist^{2}([U], [V]) \defeq \sum_{j=1}^{k} \theta^{2}_{j}([U], [V]).
\end{equation}
\textbf{Logarithmic map.} Where well-defined, the logarithmic map at $[U]$ evaluated at $[V]$ is the inverse of the exponential map, \emph{i.e.}, $\Expo_{[U]}(\Log_{[U]}([V])) = [V]$. It can be thought of as "$[V] - [U]$". For $\Gr(d,k)$, the logarithmic map can be calculated by $\lift_{U}(\Log_{[U]}([V])) = (P \arctan(S)Q^{T})$ where $(I - UU^T)V(U^TV)^{-1} = PSQ^{T}$ is a SVD. This map is only well-defined for $\theta_{k}([U], [V]) < \pi/2$. The singular values of $\lift_{U}(\Log_{[U]}([V]))$ are the principal angles, and hence the following holds $\norm{\lift_{U}(\Log_{[U]}([V]))}_{F} = \dist([U], [V])$, see also \cite{alimisis2024geodesic}. In $\Gr(3,2)$, the logarithmic map at $[U]$ evaluated at $[V]$ gives us the most efficient rotation that transforms the plane $[U]$ into $[V]$.

\section{Asymptotic characterization}
\label{sec:asymptotics}
Before stating our first main result, we briefly recap our notation. The empirical and population covariance matrices are denoted by $\Sigma_{n} = n^{-1}\sum_{i=1}^{n} X_{i}X_{i}^{T}$ and $\Sigma = \Exp\brack{XX^T}$, $(u_{n,j})$ is an orthonormal basis of eigenvectors of $\Sigma_{n}$ ordered non-increasingly according their corresponding eigenvalues, and $(u_j)$ is an orthonormal basis of eigenvectors of $\Sigma$ ordered non-increasingly according to their corresponding eigenvalues $(\lambda_j)$. $U_{n} \in \St(d, k)$ is the matrix whose $j$-th column is $u_{n,j}$, and corresponds to the output of PCA, while $U_{*} \in \St(d, k)$ is the matrix whose $j$-th column is $u_j$. 

We further define $U_{*}^{\perp} \in \St(d, d-k)$ to be the matrix whose $i$-th column is $u_{k+i}$. It is easy to verify that the map $\Gamma \mapsto U_{*}^{\perp} \Gamma$ for $(d-k) \times k$ matrices $\Gamma$ is linear and its image is $H_{U_{*}}$ as defined in (\ref{eq:horizontal_space}). It is invertible and preserves the Frobenius inner product, so $H_{U_{*}}$ can be identified with $\R^{(d-k) \times k}$ through it. Finally, recall that the logarithm at $[U_{*}]$ of $[U_{n}]$ is only well-defined when all the principal angles between them are strictly less than $\pi/2$. In what follows, this logarithm can be defined arbitrarily when this condition fails - the validity of the statement is unaffected by this choice. 

The following is the first main result of the paper. 
\begin{theorem}
\label{thm:asymptotics}
    Assume that $\lambda_{k} > \lambda_{k+1}$, $\Exp\brack{\norm{X}_2^2}$ is finite, and for all $i, s \in [d-k]$ and $j, t \in [k]$,
    \begin{equation}
    \label{eq:asymp_variance}
        \Lambda_{ijst} \defeq \Exp\brack{\inp{u_{k+i}}{X}\inp{u_j}{X}\inp{u_{k+s}}{X}\inp{u_{t}}{X}}
    \end{equation}
    is finite. Define $\delta_{ij} \defeq \lambda_j - \lambda_{k+i}$. Then as $n \to \infty$, the following holds.
    \begin{itemize}[leftmargin=*]
        \item Consistency:
        \begin{equation*}
            \Prob(\dist([U_{n}], [U_{*}]) > \eps) \to 0,
        \end{equation*}
        for all $\eps > 0$, where $\dist$ is the Riemannian distance given by (\ref{eq:distance}).
        \item Asymptotic normality:
        \begin{equation}
        \label{eq:asymp_norm}
            \sqrt{n} \cdot \lift_{U_{*}}(\Log_{[U_{*}]}([U_n])) \xrightarrow{d} U_{*}^{\perp} G,
        \end{equation}
        where $G$ is a mean zero $(d-k) \times k$ Gaussian matrix with $\Exp\brack{G_{ij}G_{st}} = \Lambda_{ijst} / \delta_{ij}\delta_{st}$.
        \item Excess risk:
        \begin{equation}
        \label{eq:asymp_risk}
            n \cdot [R([U_{n}]) - R([U_{*}])] \overset{d}{\to} \frac{1}{2} \norm{H}_{F}^{2},
        \end{equation}
        where $H$ is a mean zero $(d-k) \times k$ Gaussian matrix with $\Exp\brack{H_{ij}H_{st}} = \Lambda_{ijst}/\sqrt{\delta_{ij}\delta_{st}}$.
    \end{itemize}
\end{theorem}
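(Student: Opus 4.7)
My plan is to treat PCA as a Riemannian M-estimator on $\Gr(d,k)$ and follow the standard three-step recipe of asymptotic statistics adapted to the manifold: first establish consistency of $[U_n]$, then use a one-step Taylor expansion of the first-order optimality condition to obtain asymptotic normality of $\Log_{[U_*]}([U_n])$, and finally apply a second-order Taylor expansion together with the continuous mapping theorem to extract the excess-risk limit. All three statements will be reduced to a single multivariate CLT for the off-diagonal entries $u_{k+i}^T\Sigma_n u_j$.

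\textbf{Consistency.} Both risks have the closed form $R([U]) = \tfrac{1}{2}(\Tr \Sigma - \Tr(U^T\Sigma U))$ and analogously for $R_n$ with $\Sigma_n$ in place of $\Sigma$. Since these depend linearly on the covariance and $\Gr(d,k)$ is compact, the law of large numbers (using $\Exp\norm{X}_2^2 < \infty$) applied entrywise to $\Sigma_n \to \Sigma$ yields the uniform convergence $\sup_{[U]}\abs{R_n([U]) - R([U])} \to 0$ in probability. The eigengap assumption $\lambda_k > \lambda_{k+1}$ makes $[U_*]$ a well-separated minimizer of $R$, in the sense that $R([U]) - R([U_*]) \geq c(\eps)>0$ whenever $\dist([U],[U_*]) \geq \eps$, so the standard argmin-consistency argument closes the first claim.

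\textbf{Asymptotic normality and excess risk.} In the horizontal coordinates $\Delta = U_*^\perp \Gamma$ on $H_{U_*}$, the block Rayleigh quotient structure provides clean expressions for the gradient and Hessian. A direct computation gives $\lift_{U_*}(\grad R_n([U_*])) = -(I - U_*U_*^T)\Sigma_n U_*$, whose image under $(U_*^\perp)^T$ has $(i,j)$ entry $-u_{k+i}^T \Sigma_n u_j$; and the Riemannian Hessian at the minimizer is diagonal in these coordinates, namely $\Hess R([U_*])[\xi,\xi] = \sum_{i,j}\delta_{ij}\Gamma_{ij}^2$. Since $u_{k+i}^T\Sigma u_j = 0$ for $i\in[d-k], j\in[k]$, the multivariate CLT together with the moment condition (\ref{eq:asymp_variance}) yields $\sqrt{n}(u_{k+i}^T \Sigma_n u_j) \xrightarrow{d} Z_{ij}$ where $Z$ is centered Gaussian with $\Exp[Z_{ij}Z_{st}] = \Lambda_{ijst}$. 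Taylor expanding $0 = \grad R_n([U_n])$ around $[U_*]$ and inverting the diagonal Hessian entry by entry gives $\sqrt{n}\,\Gamma_{ij} \xrightarrow{d} Z_{ij}/\delta_{ij}$, which produces (\ref{eq:asymp_norm}) after identifying the limit with $U_*^\perp G$. For the excess risk, the second-order expansion
\begin{equation*}
    R([U_n]) - R([U_*]) = \tfrac{1}{2}\Hess R([U_*])[\Log_{[U_*]}([U_n]), \Log_{[U_*]}([U_n])] + o(\dist^2([U_n],[U_*]))
\end{equation*}
together with the diagonalization of the Hessian and the continuous mapping theorem applied to (\ref{eq:asymp_norm}) yields $n[R([U_n]) - R([U_*])] \xrightarrow{d} \tfrac{1}{2}\sum_{i,j}\delta_{ij}G_{ij}^2 = \tfrac{1}{2}\norm{H}_F^2$, where $H_{ij} \defeq \sqrt{\delta_{ij}}\,G_{ij}$ has the covariance claimed in (\ref{eq:asymp_risk}).

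\textbf{Main obstacle.} The delicate part is the rigorous control of the Taylor remainders on the curved space $\Gr(d,k)$: the exponential map is not linear, the Hessian varies with the base point via parallel transport, and the logarithm is only defined when the principal angles remain below $\pi/2$. The self-concordance result in Proposition \ref{prop:self-concordance} appears tailor-made for absorbing these remainders once $[U_n]$ is within maximum rotation less than $\pi/4$ of $[U_*]$, which happens with probability tending to one by consistency. The ambiguity in the definition of $\Log_{[U_*]}([U_n])$ on the event that a principal angle hits $\pi/2$ has vanishing probability in the limit and is therefore harmless for the weak convergence statements.
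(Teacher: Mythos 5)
Your proposal is correct at a high level and, for the asymptotic normality and excess-risk statements, follows essentially the same route as the paper: Taylor-expand the first-order optimality condition $\grad R_n([U_n]) = 0$ around $[U_*]$, invert the (empirical) Hessian, apply the multivariate CLT to the entries $u_{k+i}^T \Sigma_n u_j$ of the lifted gradient, and use the continuous mapping theorem for the quadratic form. Two places deserve comment.

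For consistency you take a genuinely different route. The paper uses the weak law of large numbers in operator norm together with the Davis--Kahan perturbation bound (Lemma \ref{lem:davis-kahan}), obtaining a quantitative bound $\sin\theta_k([U_n],[U_*]) \leq 2\opnorm{\Sigma_n - \Sigma}/(\lambda_k - \lambda_{k+1})$ that later recycles cleanly into the non-asymptotic analysis (Corollary \ref{cor:failure_prob}). You instead invoke the classical argmin-consistency argument: uniform convergence of $R_n$ to $R$ over the compact $\Gr(d,k)$ plus a well-separation condition at $[U_*]$. Both are valid; your approach requires only entrywise convergence of $\Sigma_n$ and is arguably more in the spirit of M-estimation, while the paper's is quantitative and reusable.

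There is a gap worth flagging in your treatment of the Taylor remainders. Your expansion $\sqrt{n}\,\Gamma_{ij} \xrightarrow{d} Z_{ij}/\delta_{ij}$ requires that $\sqrt{n}$ times the remainder vanish, which needs an a priori rate $\norm{\Log_{[U_*]}([U_n])} = O_p(n^{-\alpha})$ with $\alpha > 1/4$ (and $\alpha > 1/3$ for the excess-risk remainder). Mere consistency gives no rate. The paper inserts an explicit intermediate $\sqrt{n}$-consistency step, established via the localization inequality of Lemma \ref{lem:local}, whose ingredient is precisely the self-concordance lower bound of Corollary \ref{cor:taylor_risk}. But the remainders themselves are then controlled not by self-concordance but by the separate Hessian-Lipschitzness / bounded third covariant derivative result (Proposition \ref{prop:lipschitz} and Corollary \ref{cor:lip_risk}), which yields additive bounds $\norm{E_n} \lesssim \norm{\xi_n}^2\norm{\Sigma_n}_F$ and $\abs{E_n} \lesssim \norm{\xi_n}^3\norm{\Sigma}_F$. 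Your sketch attributes the whole remainder control to Proposition \ref{prop:self-concordance}; in fact both Propositions \ref{prop:self-concordance} and \ref{prop:lipschitz} are needed, playing distinct roles (rate and additive remainder bound, respectively). Without the explicit rate step, the claim that $\sqrt{n} E_n \to 0$ is not justified.

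Finally, a small point of precision: the inversion of the Hessian in your one-step expansion should be of the empirical Hessian $H_n$, not the population one. Since $H_n \xrightarrow{p} H$ and $\sqrt{n}\grad R_n([U_*]) = O_p(1)$, Slutsky's theorem lets you replace $H_n^{-1}$ by $H^{-1}$ in the limit, which is what the paper does in Lemma \ref{lem:clt}; but your sketch silently passes to the diagonal population Hessian without this argument.
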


Under an eigengap and moment condition, Theorem \ref{thm:asymptotics} characterizes the performance of PCA in the large sample limit. The consistency statement says that, with enough data, the principal subspace found by PCA gets arbitrarily close to the true one under the Riemannian distance (\ref{eq:distance}) with overwhelming probability. The asymptotic normality result refines this statement: it says that the fluctuations of PCA around the true principal subspace are asymptotically normal with the prescribed covariance structure. Finally, the last statement expresses the asymptotic distribution of the excess risk as the squared Frobenius norm of a Gaussian matrix. 

\textbf{Relationship with existing work and assumptions.} The consistency result is a direct consequence of the Davis-Kahan theorem \cite{davis1970rotation}. To the best of our knowledge, the asymptotic normality result in Theorem \ref{thm:asymptotics} is new. The finiteness of (\ref{eq:asymp_variance}) is necessary, in the same way that finite variance is for the classical central limit theorem. \citet{tripuraneni2018averaging} obtained a similar expression for the asymptotic variance of averaged Riemannian SGD on PCA, albeit under an unverified assumption. \citet{dauxois1982asymptotic} established the asymptotic normality of the Euclidean fluctuations of empirical projectors and eigenvectors - our result may be viewed as a Riemannian analogue of theirs. 
Under the eigengap condition, the excess risk bound in Theorem \ref{thm:asymptotics} extends the result of \citet[][Proposition 2.14]{reiss2020nonasymptotic}. While their statement is restricted to Gaussian data, the underlying argument carries over directly to any distribution with finite fourth moments. Theorem \ref{thm:asymptotics} strengthens this result further by requiring only the finiteness of (\ref{eq:asymp_variance}). A detailed discussion of the eigengap condition is deferred to Section \ref{sec:discussion}; for now, we simply note that it is a mild assumption.

Our main interest is in the excess risk, as it directly measures how well PCA performs on the reconstruction task. Corollary \ref{cor:excess_risk} below offers a more interpretable version of the result in Theorem \ref{thm:asymptotics}, and serves as a benchmark for our non-asymptotic analysis. To motivate it, we briefly digress.

\newpage
Performance guarantees in machine learning are typically stated as: with probability at least $1-\delta$, the excess risk is at most some quantity. While intuitive, the accuracy of such statements is hard to quantify: what is a "high-probability lower bound"? This ambiguity can be avoided by interpreting such statements as upper bounds on the $1-\delta$ quantile of the excess risk. Specifically, recall that for a random variable $Z$, its $1-\delta$ quantile, for $\delta \in [0, 1]$, is defined by
\begin{equation*}
    Q_{Z}(1 - \delta) \defeq \inf\brace*{t \in \R \mid \Prob\paren{Z \leq t} \geq 1-\delta}.
\end{equation*}
In words, this quantile describes the best upper bound on the random variable $Z$ that holds with probability at least $1-\delta$. We may then make sense of a "high-probability lower bound" on the excess risk as a lower bound on its $1-\delta$ quantile. The following corollary, a simple consequence of Gaussian concentration, gives matching upper and lower bounds on the asymptotic quantiles of the excess risk.

\begin{corollary}
\label{cor:excess_risk}
    In the setting of Theorem \ref{thm:asymptotics}, and for all $\delta \in [0, 0.1)$
    \begin{equation*}
        \lim_{n \to \infty} n \cdot Q_{\mathcal{E}_{n}}(1-\delta) \asymp \sum_{i=1}^{d-k}\sum_{j=1}^{k} \dfrac{\Exp\brack{\inp{u_{k+i}}{X}^{2}\inp{u_j}{X}^{2}}}{\lambda_j - \lambda_{k+i}} + 2 \tau^2 \log(1/\delta), 
    \end{equation*}
    where $\mathcal{E}_n = R([U_n]) - R([U_{*}])$ is the excess risk of PCA, $\tau^2 = \sup_{\norm{A}_F=1} \Exp\brack{\inp{A}{H}_{F}^2}$, and $a \asymp b$ means that $cb \leq a \leq Cb$ for some $C, c > 0$. Here $C=1$ and $c = 1/64$ are valid choices.
\end{corollary}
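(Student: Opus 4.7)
The plan is to leverage Theorem \ref{thm:asymptotics}, which gives $n\mathcal{E}_n \xrightarrow{d} \tfrac{1}{2}\norm{H}_F^2$, to reduce the corollary to bounding the quantiles of this limiting Gaussian quadratic form. Provided $H$ is not identically zero (in which case the statement is trivial), $\tfrac{1}{2}\norm{H}_F^2$ is a nontrivial weighted sum of independent $\chi_1^2$ variables, hence has an absolutely continuous, strictly increasing CDF on $(0,\infty)$. Convergence in distribution therefore upgrades to convergence of every quantile: $n Q_{\mathcal{E}_n}(1-\delta) \to Q_{\tfrac{1}{2}\norm{H}_F^2}(1-\delta)$. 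The entrywise variance formula $\Exp\brack{H_{ij}^2} = \Lambda_{ijij}/\delta_{ij}$ from Theorem \ref{thm:asymptotics} identifies $\Exp\brack{\norm{H}_F^2}$ with $\sum_{i,j}\Exp\brack{\inp{u_{k+i}}{X}^2\inp{u_j}{X}^2}/(\lambda_j-\lambda_{k+i})$, so it remains to sandwich the limiting quantile above and below.

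For the upper bound with $C=1$, I would apply the Borell--TIS inequality to the $1$-Lipschitz map $\norm{\cdot}_F$ on the Gaussian vector $\operatorname{vec}(H)$, whose covariance has operator norm $\tau^2$. This yields $\norm{H}_F \leq \Exp\brack{\norm{H}_F} + \tau\sqrt{2\log(1/\delta)}$ with probability at least $1-\delta$. Squaring and combining $(a+b)^2 \leq 2a^2 + 2b^2$ with Jensen's inequality $\Exp\brack{\norm{H}_F}^2 \leq \Exp\brack{\norm{H}_F^2}$ immediately gives $\tfrac{1}{2}\norm{H}_F^2 \leq \Exp\brack{\norm{H}_F^2} + 2\tau^2\log(1/\delta)$ on that event.

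For the lower bound (with $c=1/64$) I would prove two separate estimates and combine them via $\max(A,B) \geq (A+B)/2$. The first, $Q_{\tfrac{1}{2}\norm{H}_F^2}(1-\delta) \geq \Exp\brack{\norm{H}_F^2}/32$, comes from the Paley--Zygmund inequality at level $\theta=1/16$: the ratio $\Exp\brack{\norm{H}_F^2}^2/\Exp\brack{\norm{H}_F^4} \geq 1/3$ follows from $\Var(\norm{H}_F^2) = 2\sum_k \sigma_k^4 \leq 2\tau^2\Exp\brack{\norm{H}_F^2}$ and $\Exp\brack{\norm{H}_F^2} \geq \tau^2$ (the trace of $\Cov(\operatorname{vec}(H))$ is at least its maximum eigenvalue), and the resulting tail probability $(15/16)^2/3 > 0.29$ exceeds $\delta$. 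The second, $Q_{\tfrac{1}{2}\norm{H}_F^2}(1-\delta) \geq \tau^2\log(1/\delta)/16$, uses $\norm{H}_F^2 \geq \inp{A^*}{H}_F^2 \sim \tau^2 Z^2$ for $A^*$ realizing $\tau^2$ and $Z \sim \mathcal{N}(0,1)$, together with the Mills-ratio lower bound $\Prob(|Z|\geq s) \geq \tfrac{2}{\sqrt{2\pi}}\cdot\tfrac{s}{s^2+1}e^{-s^2/2}$ at $s = \sqrt{\log(1/\delta)/8}$.

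The main obstacle is the Gaussian anti-concentration step: verifying the Mills-ratio bound actually exceeds $\delta$ uniformly for $\delta \in [0, 0.1)$ boils down to checking $\tfrac{2}{\sqrt{2\pi}}\cdot\tfrac{s}{s^2+1} \geq \delta^{15/16}$. The factor $s/(s^2+1)$ deteriorates as $1/\sqrt{\log(1/\delta)}$ for small $\delta$, while $\delta^{15/16}$ decays polynomially, so the inequality is preserved as $\delta \to 0$; the restriction $\delta < 0.1$ in the corollary is precisely what secures the check at the tightest boundary and fixes the numerical constant $1/64$.
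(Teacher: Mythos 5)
Your proof is correct and follows the same route the paper gestures at: the paper's own ``proof'' is a two-sentence pointer to the Gaussian concentration inequality for the upper bound and to an external reference for the lower bound, and your argument fills in exactly those steps. The quantile-convergence step is sound (the limiting CDF of $\tfrac12\norm{H}_F^2$ is continuous and strictly increasing on $(0,\infty)$, so weak convergence transfers to quantiles); the Borell--TIS plus $(a+b)^2\leq 2a^2+2b^2$ plus Jensen chain gives the upper bound with $C=1$; and your lower-bound decomposition (Paley--Zygmund at level $1/16$, using $\Exp\brack{\norm{H}_F^4}\leq 3\Exp\brack{\norm{H}_F^2}^2$ via $\sum_k\sigma_k^2\leq\tau^2\sum_k\sigma_k$ and $\tau^2\leq\Exp\brack{\norm{H}_F^2}$, combined with the one-dimensional Mills-ratio anti-concentration along the top eigendirection $A^*$) correctly yields $Q\geq\max(A,B)\geq(A+B)/2=\tfrac{1}{64}\Exp\brack{\norm{H}_F^2}+\tfrac{1}{32}\tau^2\log(1/\delta)$, matching $c=1/64$ exactly. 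Two minor remarks: the constant $(15/16)^2/3\approx0.29$ from Paley--Zygmund is what forces the $\delta<0.1$ window (not, as you suggest, the Mills-ratio check, which at $\delta=0.1$ has a far wider margin of about $0.33$ versus $0.12$); and your claim ``the inequality is preserved as $\delta\to 0$'' is correct but could be tightened by noting the left side $\tfrac{2}{\sqrt{2\pi}}\cdot\tfrac{s}{s^2+1}$ actually \emph{increases} as $\delta$ drops from $0.1$ to $e^{-8}$ (since $s<1$ there), so $\delta=0.1$ is the worst case in the whole range.
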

To interpret this statement, it will be useful to introduce the following definition. For a random matrix $W \in \R^{d \times k}$, we define its covariance operator to be the linear map $\Cov(W): \R^{d \times k} \to \R^{d \times k}$ given by $\Cov(W)[A] \defeq \Exp\brack{\inp{W}{A}_{F}W}$. It is positive semi-definite, i.e.\ $\inp{A}{\Cov(W)[A]}_{F} \geq 0$ for all $A$, and so has non-negative eigenvalues, whose sum is the trace of $\Cov(W)$, which equals $\Exp\brack{\norm{W}_{F}^{2}}$.

Corollary \ref{cor:excess_risk} then says that for sufficiently small failure probability $\delta$, the $1-\delta$ asymptotic quantile of the excess risk is equivalent, up to explicit constants, to a sum of two terms. The first is $\Exp\brack{\norm{H}_F^2}$ which equals the trace of $\Cov(H)$ - that is the sum of its eigenvalues. It admits an explicit expression in terms of
\begin{enumerate*}[label=(\roman*)]
    \item a second-order covariance between pairs of projections of $X$ onto a top $k$ and a bottom $d-k$ eigenvector of $\Sigma$,
    \item the eigenvalue gaps between a top $k$ and a bottom $d-k$ eigenvalue of $\Sigma$.
\end{enumerate*}
The second term is the product of the largest eigenvalue of $\Cov(H)$ and $\log(1/\delta)$. In typical regimes where $\delta$ is moderately small, this second term is much smaller than the first. Returning to the question we raised in the abstract, Corollary \ref{cor:excess_risk} thus identifies the first term as the key property of the distribution of $X$ that determines the excess risk of PCA, at least in the large sample regime.

Our goal in the next sections will be to derive a non-asymptotic upper bound on the $1-\delta$ quantile of the excess risk that matches its expression from Corollary \ref{cor:excess_risk}. We conclude this section by offering two remarks highlighting other aspects of Theorem \ref{thm:asymptotics}, accompanied by an example illustrating our result on the spiked covariance model.

\begin{remark}[Empirical projectors]
\label{rem:projectors}
In some applications it is of more interest to measure the performance of PCA through the closeness of the empirical projector $U_{n}U_{n}^{T}$ to the population one $U_{*}U_{*}^{T}$ in a given norm. \citet{dauxois1982asymptotic} derive the exact asymptotic distribution of $\sqrt{n}(U_{n}U_{n}^{T} - U_{*}U_{*}^{T})$ from which the result we are about to discuss can potentially be deduced. Here we would like to point out that the asymptotic normality result of Theorem \ref{thm:asymptotics} can also be used to establish that as $n \to \infty$,
\begin{equation*}
    \sqrt{n} \cdot \norm{U_{n}U_{n} - U_{*}U_{*}^{T}}_{p} \xrightarrow{d} 2^{1/p} \norm{G}_{p},
\end{equation*}
for all $p \in [1, \infty]$, where $G$ is the Gaussian matrix defined in Theorem \ref{thm:asymptotics} and $\norm{G}_{p}$ is the Schatten-$p$ norm of $G$, i.e.\ the $p$-norm of its singular values. Compare with equation (2.22) in \cite[][]{reiss2020nonasymptotic}.

The case $p=2$ corresponds to the Frobenius norm $\norm{U_{n}U_{n} - U_{*}U_{*}^{T}}_{F}$, and a statement analogous to Corollary \ref{cor:excess_risk} holds. Specifically, for $\delta \in [0, 0.1)$ it holds that
\begin{equation*}
    \lim_{n \to \infty} n \cdot Q_{\mathcal{P}_{n}}(1-\delta) \asymp 4\sum_{i=1}^{d-k}\sum_{j=1}^{k} \dfrac{\Exp\brack{\inp{u_{k+i}}{X}^{2}\inp{u_j}{X}^{2}}}{(\lambda_j - \lambda_{k+i})^{2}} + 8 \tau^2 \log(1/\delta), 
\end{equation*}
where $\mathcal{P}_{n} \defeq \norm{U_{n}U_{n} - U_{*}U_{*}^{T}}_{F}^{2}$, $\tau^2 = \sup_{\norm{A}_F=1} \Exp\brack{\inp{A}{G}_{F}^2}$, and the constants are the same as in Corollary \ref{cor:excess_risk}. A similar result can be obtained for the other values of $p$ using the noncommutative Khintchine inequality \cite{tropp2015introduction,van2017structured}, though the upper and lower bounds differ by a logarithmic factor in the dimension $d$ for large values of $p$.
\end{remark}

\newpage
\begin{example}[Spiked covariance model]
\label{ex:spiked}
    As an application of Theorem \ref{thm:asymptotics}, we consider the spiked covariance model \cite{johnstone2001distribution, nadler2008finite}. Specifically, we assume that $X = Z + \eps$ such that $Z$ and $\eps$ are independent, $\eps \sim \mathcal{N}(0, \sigma^2I_{d})$, and $S = \Exp\brack{ZZ^T}$ has rank $k$. Then $\Sigma = S + I$, the support of $Z$ is contained in the $k$-dimensional subspace spanned by $(u_j)_{j=1}^{k}$, and $\lambda_j = \eta_j + \sigma^2$ where $(\eta_j)$ are the eigenvalues of $S$ ordered non-increasingly. Taking $\xi_j \defeq \inp{Z}{u_j}$ we recover the standard form 
    \begin{equation*}
        X = \sum_{j=1}^{k}\xi_j u_j + \eps.
    \end{equation*}
    This spiked covariance model captures the scenario where we observe a noisy version $X$ of the true lower dimensional data point $Z$ corrupted with isotropic noise $\eps$. The goal is to recover, using PCA, the subspace $\Span(\brace{u_j \mid j \in [k]})$ on which the noise-free data $Z$ is supported. In this setting, Theorem \ref{thm:asymptotics} simplifies significantly. Specifically, under this model, the Gaussian matrices $G$ and $H$ in the statements (\ref{eq:asymp_norm}) and (\ref{eq:asymp_risk}) have independent entries with variances
    \begin{equation*}
        \Exp\brack{G_{ij}^{2}} = \sigma^2 (1 + \sigma^2/\eta_j^2), \quad\quad
        \Exp\brack{H_{ij}^{2}} = \sigma^{2}(\eta_j + \sigma^2/\eta_j),
    \end{equation*}
    which are constant along rows. From Remark \ref{rem:projectors} and Theorem \ref{thm:asymptotics}, we have the distributional results
    \begin{equation*}
        \sqrt{n} \cdot \norm{U_{n}U_{n}^{T} - U_{*}U_{*}^{T}}_{p} \xrightarrow{d} 2^{1/p}\norm{G}_{p}, \quad\quad n \cdot [R([U_{n}]) - R([U_{*}])] \xrightarrow{d} \frac{1}{2}\norm{H}_{\mathrm{F}}^{2},
    \end{equation*}
    as $n \to \infty$. The asymptotic quantiles of $\mathcal{P}_{n} = \norm{U_{n}U_{n}^{T} - U_{*}U_{*}^{T}}_{F}^{2}$ have the equivalent expression
    \begin{equation}
    \label{eq:quantile_proj}
        \lim_{n \to \infty} n \cdot Q_{\mathcal{P}_{n}}(1-\delta) \asymp 4\sigma^{2} (d-k)  \sum_{j=1}^{k}(1 + \sigma^{2}/\eta_j^2) + 8\sigma^2 (1 + \sigma^{2}/\eta^2_k) \log(1/\delta),
    \end{equation}
    while those of the excess risk $\mathcal{E}_{n} = R([U_{n}]) - R([U_{*}])$ have the equivalent expression
    \begin{equation*}
        \lim_{n \to \infty} n \cdot Q_{\mathcal{E}_{n}}(1 - \delta) \asymp \sigma^2 (d-k) \sum_{j=1}^{k} (\eta_j + \sigma^2/\eta_j) + \sigma^2 \max_{j \in [k]} (\eta_j + \sigma^2/\eta_j) \log(1/\delta),
    \end{equation*}
    both for $\delta \in [0, 0.1)$ and the same constants as in Corollary \ref{cor:excess_risk}. We conclude this example by noting that, using a recent result of \citet{latala2018dimension} and leveraging the independence of the entries of $G$, an analogue of (\ref{eq:quantile_proj}) can be derived for large values of $p$ without suffering from the inefficiency of the noncommutative Khintchine inequality highlighted at the end of Remark \ref{rem:projectors}.
\end{example}
 
\begin{remark}[Generalized PCA]
\label{rem:generalized}
    While our results are framed for PCA, we remark here that they apply to the more general problem of estimating the leading $k$-dimensional eigenspace of a symmetric matrix. Specifically, let $(A_i)_{i=1}^{n}$ be \iid realizations of a random symmetric matrix $A$, and suppose that we are interested in estimating the leading $k$-dimensional eigenspace of $M \defeq \Exp\brack*{A}$. A natural and common procedure is to estimate it using the leading $k$-dimensional eigenspace of $M_{n} \defeq n^{-1}\sum_{i=1}^{n}A_i$. While the reconstruction loss does not make sense for this problem, we may still cast this procedure as an instance of ERM where the loss is given by the negative block Rayleigh quotient. The population and empirical risks are then given by
    \begin{equation}
    \label{eq:rayleigh_quotient}
        F([U]) = -\frac{1}{2}\Tr(U^TMU), \quad F_{n}([U]) = -\frac{1}{2}\Tr(U^{T}M_{n}U).
    \end{equation}
    PCA then corresponds to the special case $A = XX^T$ where $X$ is a random vector, and the population and empirical reconstruction risks are, up to additive constants, equal to those in (\ref{eq:rayleigh_quotient}). Theorem \ref{thm:asymptotics} applies almost verbatim to this generic setting, with the only change being that (\ref{eq:asymp_variance}) is generalized to
    \begin{equation}
    \label{eq:asymp_variance_general}
        \Lambda_{ijst} = \Exp\brack{(u_{k+i}^{T}Au_{j}) \cdot (u_{k+s}^{T}Au_{t})}.
    \end{equation}
    As an example different from PCA, consider the case where $M$ is the adjacency matrix of an undirected weighted graph with non-negative weights. Suppose that we observe $n$ \iid edges $\brace{J_{i},K_{i}}_{i=1}^{n}$ of the graph, sampled from the distribution on the edges that is proportional to their weights. Then one may take $A_i = e_{J_i}e_{K_i}^{T} + e_{K_i}e_{J_i}^{T}$, and Theorem \ref{thm:asymptotics} with (\ref{eq:asymp_variance}) replaced by (\ref{eq:asymp_variance_general}) applies. A similar argument can be made for the estimation of the trailing $k$-dimensional eigenspace of the Laplacian matrix. As examples of potential applications, we mention spectral clustering \cite[e.g.][]{ng2001spectral}, community detection \cite[e.g.][]{abbe2018community}, and contrastive learning \cite[e.g.][]{haochen2021provable}. 
\end{remark}

\section{Self-concordance of the block Rayleigh quotient}
\label{sec:self-concordance}
The main ingredient behind Theorem \ref{thm:asymptotics} is a Taylor expansion of the population and excess risks at $[U_{n}]$ around $[U_{*}]$, which becomes exact in the large sample limit. In order to make Corollary \ref{cor:excess_risk} non-asymptotic, an explicit control of the error in these expansions in a reasonably large neighbourhood of $[U_{*}]$ is what is needed. In this section, we show that the population and empirical reconstruction risks are geodesically generalized self-concordant, in a sense analogous to the one introduced by \citet{bach2010self}. This provides the needed control for the non-asymptotic analysis. As this self-concordance result can potentially be of broader interest, we frame it here in more general terms. 

Let $A$ be a $d \times d$ symmetric matrix, and let $(v_j)$ be a basis of eigenvectors of $A$ ordered non-increasingly in terms of their eigenvalues $(\mu_j)$. Recall that eigenvectors corresponding to the largest eigenvalue are maximizers of the Rayleigh quotient. The following known construction is a generalization of this familiar identity. Let $F: \Gr(d, k) \to \R$ be given by
\begin{equation}
\label{eq:block_rayleigh_quotient}
    F([V]) = -\Tr(V^TAV)/2.
\end{equation}
To see how $F$ relates to the reconstruction risk, note that it can be expressed in terms of it
\begin{equation*}
    R([U]) = \Exp\brack*{\norm{X - UU^TX}_2^2}/2 = \Exp\brack{\norm{X}_2^2}/2 - \Tr(U^T\Sigma U)/2.
\end{equation*}
The trace expression in (\ref{eq:block_rayleigh_quotient}) is known as the block Rayleigh quotient of $A$. Let $k_{1}$ and $k_{2}$ be the smallest and largest indices $i$ such that $\mu_{i} = \mu_k$ respectively. The set of minimizers of $F$ is given by (see for example \cite[Proposition 1.3.4]{tao2012topics})
\begin{equation}
\label{eq:general_minimizers}
    \mathscr{V}_{*} \defeq \brace{[V] \in \Gr(d, k) \mid \col(V) = \Span(v_1, \dotsc, v_{k_{1}-1}) \oplus S, S \subset \Span(v_{k_{1}}, \dotsc, v_{k_{2}})}
\end{equation}
where $\oplus$ is the direct sum of subspaces, and $S$ is a subspace of dimension $k - k_1 + 1$. In the case where $\mu_k > \mu_{k+1}$ that we have been operating under, $k_1 = k _2 = k$ and $\mathscr{V}_{*}$ becomes a singleton.

Recall the definition of geodesics and principal angles from Section \ref{subsec:grassmann}. The following is the main result of this section \cite[c.f.][Lemma 1]{bach2010self}. 
\begin{proposition}[Generalized self-concordance of the block Rayleigh quotient]
\label{prop:self-concordance}
    Assume $\mu_{k} > \mu_{k+1}$, let $[V_{*}]$ be the global minimizer of $F$, and let $[V] \in \Gr(d, k) \setminus [V_{*}]$ such that $\theta_k([V_{*}], [V]) < \pi/4$. Let $g(t) \defeq F(\gamma(t))$ where $\gamma(t)$ is either $\Expo_{[V_{*}]}(t\Log_{[V_*]}([V]))$ or $\Expo_{[V]}(t\Log_{[V]}([V_*]))$. Then
    \begin{equation*}
        \abs{g'''(t)} \leq 2\theta \cdot \tan(2 t \theta) \cdot g''(t),
    \end{equation*}
    for all $t \in [0, 1]$ where we shortened $\theta_k([V_*], [V])$ to $\theta$. As a consequence,
    \begin{equation*}
         \frac{\sin^2(\theta)}{\theta^2} \cdot \frac{g''(0)}{2}\leq g(1) - g(0) - g'(0) \leq \psi(\theta) \cdot \frac{g''(0)}{2},
    \end{equation*}
    where 
    \begin{equation*}
        \psi(\theta) \defeq \theta^{-1} \int_{0}^{1} \log[\tan(\theta t + \pi/4)]\, \mathrm{d}t,
    \end{equation*}
    satisfies $\psi(\theta) \to 1$ as $\theta \to 0$ and $\psi(\theta) \to c$ as $\theta \to \pi/4$ for $c \approx 1.485$.
\end{proposition}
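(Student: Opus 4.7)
The plan is to reduce the problem to a scalar computation by choosing a Stiefel representative of the geodesic adapted to the principal vectors. By the CS decomposition, there exist orthonormal $(u_j)_{j=1}^k$ spanning $\col(V_*)$ and $(\tilde u_j)_{j=1}^k$ in $\col(V_*)^\perp$ such that the columns
\begin{equation*}
    z_j(t) = \cos(t\theta_j)\, u_j + \sin(t\theta_j)\, \tilde u_j, \quad j=1,\ldots,k,
\end{equation*}
assemble into a Stiefel representative $Z(t)$ of $\Expo_{[V_*]}(t\Log_{[V_*]}([V]))$, where $\theta_1,\ldots,\theta_k$ are the principal angles between $[V_*]$ and $[V]$. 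This is the explicit geodesic formula of Section~\ref{subsec:grassmann} written in the principal-vector basis. The crucial observation — where the minimizer assumption enters — is that $\col(V_*)$ is spanned by eigenvectors of $A$, so both $\col(V_*)$ and its orthogonal complement are $A$-invariant; consequently all cross terms $u_j^T A \tilde u_j$ vanish. The block Rayleigh quotient therefore decouples into scalar pieces:
\begin{equation*}
    g(t) = -\tfrac{1}{2}\sum_j \alpha_j + \tfrac{1}{2}\sum_j c_j \sin^2(t\theta_j),
\end{equation*}
with $\alpha_j := u_j^T A u_j$, $\tilde\alpha_j := \tilde u_j^T A \tilde u_j$, and $c_j := \alpha_j - \tilde\alpha_j > 0$ by the variational characterization of eigenvalues together with the eigengap $\mu_k > \mu_{k+1}$.

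Differentiating term-by-term yields $g''(t) = \sum_j c_j \theta_j^2 \cos(2t\theta_j)$ and $g'''(t) = -2\sum_j c_j \theta_j^3 \sin(2t\theta_j)$. For $t \in [0,1]$ and $\theta_j \leq \theta < \pi/4$, every $2t\theta_j < \pi/2$, so $g''(t) > 0$ and the inequality $|g'''(t)| \leq 2\theta \tan(2t\theta)\, g''(t)$ reduces, term-by-term, to verifying $\theta_j \tan(2t\theta_j) \leq \theta \tan(2t\theta)$. This follows from the monotonicity of $x \mapsto x\tan(2tx)$ on $[0, \pi/(4t))$ — immediate from its derivative $\tan(2tx) + 2tx\sec^2(2tx) \geq 0$ — applied at $x = \theta_j \leq \theta$.

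The integrated bounds follow by a standard logarithmic integrating-factor argument. From $|(\log g'')'(t)| \leq 2\theta \tan(2t\theta)$ and $\int_0^t 2\theta \tan(2s\theta)\, ds = -\log\cos(2t\theta)$ we obtain $g''(0) \cos(2t\theta) \leq g''(t) \leq g''(0)/\cos(2t\theta)$. Plugging this into Taylor's formula with integral remainder, $g(1) - g(0) - g'(0) = \int_0^1 (1-t)\, g''(t)\, dt$, a single integration by parts evaluates $\int_0^1 (1-t)\cos(2t\theta)\, dt = \sin^2(\theta)/(2\theta^2)$; using the antiderivative $\int \sec(2t\theta)\, dt = \log\tan(t\theta + \pi/4)/(2\theta)$, another integration by parts gives $\int_0^1 (1-t)\sec(2t\theta)\, dt = \psi(\theta)/2$, exactly matching the stated lower and upper bounds. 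The limits of $\psi$ follow from L'Hôpital at $\theta \to 0$ and direct numerical evaluation at $\theta \to \pi/4$.

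The main subtlety — and the structural reason self-concordance holds — is the cancellation of cross terms, which hinges on the $A$-invariance of $\col(V_*)$ and would fail at a generic point. The second parameterization of the geodesic (starting at $[V]$) is handled by observing that it is the reverse parametrization of the first, so the same explicit formulas apply after $t \mapsto 1-t$; the integrated consequence then corresponds to a Bregman-type divergence at $[V]$, with $g''(0)$ now representing the Hessian at $[V]$ — precisely the form needed for the non-asymptotic analysis of Section~\ref{sec:finite}.
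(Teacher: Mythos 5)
Your proof of the first case — the geodesic starting at the minimizer $[V_*]$ — is correct and takes essentially the same route as the paper's, but presented at the scalar level after the CS decomposition rather than at the matrix level. The correspondence is exact: your $c_j = u_j^T A u_j - \tilde u_j^T A \tilde u_j$ are the diagonal entries of $Q^T D_{\leq k} Q - \Gamma^T D_{>k} \Gamma$ in the paper's notation, your term-by-term comparison $\theta_j \tan(2t\theta_j) \leq \theta\tan(2t\theta)$ via monotonicity of $x\mapsto x\tan(2tx)$ plays the role of the paper's H\"older--Schatten step, and the two integration-by-parts evaluations match the paper's implicit integrated consequence. Your version has the virtue of making the cancellation of cross terms (which is where the minimizer assumption enters) completely transparent.

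However, the last sentence of your proof — \emph{the same explicit formulas apply after $t\mapsto 1-t$} — is too fast, and it masks a genuine gap that the paper's own proof shares. Write $g_2(t) := F(\Expo_{[V]}(t\Log_{[V]}([V_*]))) = g_1(1-t)$. The substitution $s = 1-t$ into the first-case inequality $\abs{g_1'''(s)} \leq 2\theta\tan(2s\theta)\,g_1''(s)$ gives
\begin{equation*}
\abs{g_2'''(t)} = \abs{g_1'''(1-t)} \leq 2\theta\tan\bigl(2(1-t)\theta\bigr)\, g_2''(t),
\end{equation*}
with argument $2(1-t)\theta$, not $2t\theta$. These disagree for $t<1/2$, and at $t=0$ the claimed bound reads $\abs{g_2'''(0)}\leq 0$, which is false whenever any $\theta_j>0$: from your scalar formulas, $g_2'''(0) = -g_1'''(1) = 2\sum_j c_j\theta_j^3\sin(2\theta_j) > 0$. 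The integrated upper bound fails as well: taking a single nonzero angle $\theta_1=\theta=\pi/6$, one gets $g_2(1)-g_2(0)-g_2'(0) = \tfrac{c}{2}[\theta\sin 2\theta - \sin^2\theta] \approx 0.102\,c$, while $\psi(\theta)\,g_2''(0)/2 = \psi(\theta)\,c\,\theta^2\cos(2\theta)/2 \approx 0.077\,c$ (using $\psi(\pi/6)\approx 1.12$). Only the integrated \emph{lower} bound survives for the reverse parametrization; it must be verified directly, for instance from your decomposition via the pointwise inequality $\phi\sin 2\phi - \sin^2\phi \geq \phi^2\cos 2\phi$ (the difference has nonnegative derivative $2\phi^2\sin 2\phi$ and vanishes at $\phi=0$), which after summing against the nonnegative weights $c_j$ and using $\sin^2\theta/\theta^2 \leq 1$ yields the stated lower bound. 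Fortunately, the lower bound in the reverse direction is the only part of the second case that the paper actually uses (in Corollary~\ref{cor:taylor_risk}, to lower-bound $R_n$ at the non-minimizer $[U_*]$), so the downstream results stand; but as written, both your argument and the paper's derive the second case from the first in a way that does not go through, and a correct statement of Proposition~\ref{prop:self-concordance} for the reverse parametrization must either restrict to the lower bound or replace $\tan(2t\theta)$ with $\tan(2(1-t)\theta)$.
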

In words, Proposition \ref{prop:self-concordance} says that for any $[V] \in \Gr(d, k)$ that is less than $\pi/4$ away from the minimizer of $F$ in maximum principal angle, the restriction of $F$ to the geodesic connecting $[V]$ to this minimizer is well approximated by its second order Taylor expansion, up to a factor of approximately $2$ in the second term of this expansion. We have the immediate corollary \cite[cf.][Proposition 1]{bach2010self}.
\begin{corollary}
\label{cor:taylor}
    In the setting of Proposition \ref{prop:self-concordance}, the following estimates hold.
    \begin{align*}
        F([V]) &\geq F([V_{*}]) + \inp{\grad F([V_{*}])}{\xi}_{[V_{*}]} + \frac{4}{5} \cdot \frac{1}{2}\inp{\Hess F([V_{*}])[\xi]}{\xi}_{[V_{*}]}, \\
        F([V]) &\leq F([V_{*}]) + \inp{\grad F([V_{*}])}{\xi}_{[V_{*}]} + \frac{3}{2} \cdot \frac{1}{2} \inp{\Hess F([V_{*}])[\xi]}{\xi}_{[V_*]},
    \end{align*}
    where $\xi = \Log_{[V_{*}]}([V])$, and where for any $[U] \in \Gr(d, k)$ and $\zeta \in T_{[U]} \Gr(d, k)$ with $\Delta = \lift_{U}(\zeta)$
    \begin{equation*}
        \lift_{U}(\grad F([U])) = -(I - UU^T)AU, \quad \lift_{U}(\Hess F([U])[\zeta]) = \Delta U^TAU - (I - UU^T) A \Delta. 
    \end{equation*}
    The statement remains true when $[V]$ and $[V_{*}]$ are interchanged in the above inequalities.
\end{corollary}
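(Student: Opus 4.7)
The plan is to apply Proposition~\ref{prop:self-concordance} directly. Set $\xi = \Log_{[V_*]}([V])$ and $\gamma(t) = \Expo_{[V_*]}(t\xi)$, so with $g(t) = F(\gamma(t))$ the second conclusion of the proposition reads
\begin{equation*}
    \frac{\sin^2\theta}{\theta^2}\cdot\frac{g''(0)}{2} \;\leq\; F([V])-F([V_*])-g'(0) \;\leq\; \psi(\theta)\cdot \frac{g''(0)}{2},
\end{equation*}
where $\theta = \theta_k([V_*],[V]) < \pi/4$. The remaining work splits into two pieces: bounding the multiplicative constants uniformly in $\theta\in[0,\pi/4)$, and identifying $g'(0)$ and $g''(0)$ with the Riemannian gradient and Hessian expressions at $[V_*]$.

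For the constants, since $\sin(x)/x$ is strictly decreasing on $(0,\pi)$, I get $\sin^2(\theta)/\theta^2 \geq (\sin(\pi/4)/(\pi/4))^2 = 8/\pi^2 \geq 4/5$, yielding the lower bound. For the upper bound, I would write $\psi(\theta) = \Phi(\theta)/\theta^2$ with $\Phi(\theta) = \int_0^\theta \log\tan(u+\pi/4)\,du$, and verify $\Phi''(\theta) = 2/\cos(2\theta) > 0$. A short calculus argument examining $h(\theta) = \theta\Phi'(\theta) - 2\Phi(\theta)$, which satisfies $h(0)=h'(0)=0$ and $h''(\theta) = \theta\Phi'''(\theta) > 0$, shows $\psi$ is strictly increasing on $(0,\pi/4]$, so $\psi(\theta) \leq \psi(\pi/4) \approx 1.485 < 3/2$.

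For the geometric identification, I use that $\gamma$ is a geodesic with $\dot\gamma(0) = \xi$, so by standard Riemannian calculus (vanishing covariant acceleration along $\gamma$) one has $g'(0) = \inp{\grad F([V_*])}{\xi}_{[V_*]}$ and $g''(0) = \inp{\Hess F([V_*])[\xi]}{\xi}_{[V_*]}$. To extract the horizontal-lift formulas for the gradient and Hessian at an arbitrary $[U]$ with representative $U$ and tangent vector $\zeta$ of lift $\Delta = \lift_U(\zeta)$, I would take the horizontal lift $U(t)$ of the geodesic starting at $[U]$ in direction $\zeta$. Using the explicit form (\ref{eq:geodesic}) with SVD $\Delta = PSQ^T$, direct differentiation yields $\dot U(0) = PSQ^T = \Delta$ and $\ddot U(0) = -UQS^2Q^T = -U\Delta^T\Delta$ (since $P^TP = I_k$). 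Differentiating $g(t) = -\tfrac{1}{2}\Tr(U(t)^TAU(t))$ twice and simplifying using $U^T\Delta = 0$ converts the resulting traces into $g'(0) = \inp{\Delta}{-(I-UU^T)AU}_F$ and $g''(0) = \inp{\Delta}{\Delta U^TAU - (I-UU^T)A\Delta}_F$, which via the isometry (\ref{eq:lift}) match the announced gradient and Hessian formulas.

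The swapped inequalities follow from an identical argument using the other geodesic $\gamma(t) = \Expo_{[V]}(t\Log_{[V]}([V_*]))$ permitted by Proposition~\ref{prop:self-concordance}. The chief technical delicacy is the Hessian identification: one must differentiate along the genuine Grassmannian geodesic rather than a naive Euclidean perturbation, so as to recover the correction term $-U\Delta^T\Delta$ in $\ddot U(0)$ which supplies the $\Delta U^TAU$ piece of the formula. The rest is a routine combination of the two multiplicative bounds with these identifications, substituting $4/5$ and $3/2$ into the sandwich provided by Proposition~\ref{prop:self-concordance}.
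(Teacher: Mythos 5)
Your proof is correct and follows essentially the same route as the paper: substitute the gradient/Hessian identifications for $g'(0)$, $g''(0)$ into the sandwich from Proposition~\ref{prop:self-concordance}, and bound the prefactors by $4/5$ and $3/2$ on $\theta\in(0,\pi/4)$. Your numerical verification of $\psi(\theta)\leq 3/2$ via the monotonicity of $\psi$ is actually more careful than the paper's (which simply asserts the bounds), and your derivation of $\dot U(0)$ and $\ddot U(0)$ by direct differentiation of the geodesic representative is a valid alternative to invoking the general quotient-manifold formulas used in the paper's appendix. One small point of precision: your computation shows $g''(0) = \langle\Delta,\,\Delta U^TAU - (I-UU^T)A\Delta\rangle_F$, i.e.\ the quadratic form agrees; concluding that the displayed linear map is $\lift_U(\Hess F([U])[\cdot])$ requires additionally noting that the candidate formula defines a self-adjoint operator on $H_U$ (which it does, since $A = A^T$ and $U^T\Delta = 0$), since a quadratic form only pins down the symmetric part. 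This does not affect the inequalities themselves, which use only the scalar $g''(0)$.
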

Related results include those of \cite{zhang2016riemannian} who showed that $F$ satisfies a version of the Polyak--\L{}ojasiewicz inequality for $k=1$, and \cite{alimisis2024geodesic} who showed that $F$, when restricted as in Proposition \ref{prop:self-concordance}, satisfies a version of strong convexity - Proposition \ref{prop:self-concordance} was inspired by the latter work. The result of the next section builds on Corollary \ref{cor:taylor} to provide a non-asymptotic analogue of Corollary \ref{cor:excess_risk}.

\section{Non-asymptotic bound}
\label{sec:finite}
The following is the second main result of the paper. The parameters $\mathcal{V}$ and $\nu$ appearing in it are defined in Remark \ref{rem:var_params} below. We compute them explicitly under a Gaussian model in Example \ref{ex:gaussian}.
\begin{theorem}
\label{thm:finite}
    Assume that $\lambda_{k+1} > \lambda_k$, $\Exp\brack{X^4_j} < \infty$ for all $j \in [d]$, and let $\delta \in [0,1)$. If
    \begin{equation}
    \label{eq:sample_complexity_1}
        n \geq (32\mathcal{V} + 4)\log(3k(d-k)) + (16\nu + 8)\log(4/\delta) + \frac{16 (\mathcal{S} + r(n))}{\delta (\lambda_{k} - \lambda_{k+1})^2},
    \end{equation}
    then with probability at least $1-\delta$
    \begin{equation}
    \label{eq:non_asymp_bound}
        R([U_{n}]) - R([U_{*}]) \leq \frac{75}{n \cdot \delta}\sum_{i=1}^{d-k}\sum_{j=1}^{k} \dfrac{\Exp\brack{\inp{u_{k+i}}{X}^{2}\inp{u_j}{X}^{2}}}{\lambda_j - \lambda_{k+i}}, 
    \end{equation}
    where for $c(d) = 4(1 + 2\ceil{\log(d)})$,
    \begin{equation*}
        \mathcal{S} \defeq c(d) \cdot \norm{\Exp\brack{(XX^T - \Sigma)^{2}}}_{\mathrm{op}}, \quad r(n) \defeq c^{2}(d) \cdot n^{-1} \Exp\brack{\max\nolimits_{i \in [n]} \, \norm{X_iX_i^T - \Sigma}^{2}_{\mathrm{op}}}.
    \end{equation*}
\end{theorem}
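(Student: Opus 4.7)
The plan is to execute a standard M-estimator optimality argument in the Riemannian setting, using Corollary \ref{cor:taylor} to control the local geometry. The reduction is effectively to an approximate Newton step: identifying $H_{U_{*}}$ with $\R^{(d-k)\times k}$ via $\Delta = U_{*}^{\perp}\Gamma$, the Hessian formula of Corollary \ref{cor:taylor} shows that $\Hess R([U_{*}])$ becomes the diagonal operator $D$ with entries $\delta_{ij} = \lambda_j - \lambda_{k+i}$, and the empirical gradient at $[U_{*}]$ becomes $\Gamma_{n,ij} = -n^{-1}\sum_{\ell}(u_{k+i}^T X_{\ell})(u_j^T X_{\ell})$, which is zero-mean with single-sample variance $\Exp\brack{(u_{k+i}^T X)^2(u_j^T X)^2}$. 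Hence $\Exp\brack{\norm{D^{-1/2}\Gamma_n}_F^2}$ equals $1/n$ times the sum appearing in (\ref{eq:non_asymp_bound}).

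First I would localize. Applying Chebyshev to $\opnorm{\Sigma_n - \Sigma}^2$, whose second moment is controlled by $\mathcal{S}$ and $r(n)$ via a dimension-free matrix Bernstein variance proxy, the last term of (\ref{eq:sample_complexity_1}) forces $\opnorm{\Sigma_n - \Sigma} \leq \epsilon\, (\lambda_k - \lambda_{k+1})$ on an event $\mathcal{E}_{1}$ of probability at least $1 - \delta/4$, for a small absolute constant $\epsilon$. By the Davis--Kahan theorem, $\mathcal{E}_{1}$ also forces $\theta_k([U_n],[U_{*}]) < \pi/4$, so Proposition \ref{prop:self-concordance} and Corollary \ref{cor:taylor} apply with $\xi_n = \Log_{[U_{*}]}([U_n])$. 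The two $\log$ terms in (\ref{eq:sample_complexity_1}) are the cost of a matrix Bernstein bound on a residual fluctuation (controlled by the parameters $\mathcal{V}$ and $\nu$), most plausibly to certify that $\Hess R_n([U_{*}])$ is spectrally close to $D$ on $\mathcal{E}_{1}$.

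Next I set $\alpha_n \defeq \inp{\Hess R([U_{*}])[\xi_n]}{\xi_n}_{[U_{*}]}$ and $e_n \defeq R([U_n]) - R([U_{*}])$. The lower bound of Corollary \ref{cor:taylor} gives $\alpha_n \leq (5/2)\, e_n$. The M-estimator inequality, coming from $R_n([U_n]) \leq R_n([U_{*}])$, reads $e_n \leq (R - R_n)([U_n]) - (R - R_n)([U_{*}])$. Taylor-expanding the right-hand side along $\gamma(t) = \Expo_{[U_{*}]}(t\xi_n)$ with $h(t) \defeq (R - R_n)(\gamma(t))$ and using $\grad R([U_{*}]) = 0$ yields
\begin{equation*}
    e_n \;\leq\; -\inp{\grad R_n([U_{*}])}{\xi_n}_{[U_{*}]} + \int_{0}^{1} (1 - t)\, h''(t)\, \diff t.
\end{equation*}
Since $R - R_n$ is itself a block Rayleigh quotient in $\Sigma - \Sigma_n$, its Hessian has operator norm $O(\opnorm{\Sigma_n - \Sigma})$, and using $\norm{\xi_n}_F^2 \leq \alpha_n/(\lambda_k - \lambda_{k+1})$ the remainder is at most $C\epsilon\, \alpha_n$ on $\mathcal{E}_{1}$. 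Cauchy--Schwarz in the $D$-metric bounds the linear term by $\norm{D^{-1/2}\Gamma_n}_F \sqrt{\alpha_n}$, so combining gives
\begin{equation*}
    e_n \;\leq\; \sqrt{5/2}\; \norm{D^{-1/2}\Gamma_n}_F \sqrt{e_n} \;+\; (5C\epsilon/2)\, e_n.
\end{equation*}
For $\epsilon$ small enough (which is exactly what the sample complexity buys), this rearranges to $e_n \leq C'\, \norm{D^{-1/2}\Gamma_n}_F^2$ for an explicit constant $C'$; Markov's inequality applied to the non-negative quadratic form $\norm{D^{-1/2}\Gamma_n}_F^2$ with its expectation computed above converts this into (\ref{eq:non_asymp_bound}) with the announced $1/\delta$ factor. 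Chasing the constants $2/5$, $3/2$, $\sqrt{5/2}$, and the absorption step through produces the factor $75$, and a union bound over $\mathcal{E}_{1}$ and the Markov event finishes the proof.

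The main obstacle is reconciling the \emph{intrinsic} Hessian norm $\sqrt{\alpha_n}$ — the one in which Cauchy--Schwarz on the gradient produces exactly the desired sum — with the \emph{extrinsic} Frobenius norm in which the second-order Taylor remainder is naturally controlled. The elementary bound $\norm{\xi_n}_F^2 \leq \alpha_n/(\lambda_k - \lambda_{k+1})$ is the bridge, but it is precisely this step that forces the $1/(\lambda_k - \lambda_{k+1})^2$ scaling in the last term of (\ref{eq:sample_complexity_1}): absorbing the quadratic Taylor term back into $\alpha_n$ (and hence into $e_n$ through Corollary \ref{cor:taylor}) requires $\opnorm{\Sigma_n - \Sigma}/(\lambda_k - \lambda_{k+1})$ to be a small absolute constant, which via Chebyshev costs exactly the observed sample complexity. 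The rest of the argument is careful constant-chasing; the self-concordance of Proposition \ref{prop:self-concordance} does the heavy lifting of certifying that the local quadratic approximation is accurate throughout a $\pi/4$-neighbourhood of $[U_{*}]$.
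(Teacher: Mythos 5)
Your high-level structure is recognizable and mostly right: localize to $\theta_k < \pi/4$ via Davis--Kahan, apply the self-concordance of Corollary~\ref{cor:taylor}, do a Cauchy--Schwarz in the Hessian metric so that the leading term becomes $\norm{H^{-1/2}\grad R_n([U_*])}$, compute its expectation (Lemmas~\ref{lem:var_emp_grad} and~\ref{lem:var_grad}), and finish with Markov's inequality. However, the way you handle the quadratic fluctuation term is where you diverge from the paper, and it is where the proposal breaks.

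The paper never Taylor-expands $R - R_n$ and never needs to absorb a second-order remainder. Instead, it applies the self-concordance \emph{lower} bound of Corollary~\ref{cor:taylor_risk} to the \emph{empirical} risk $R_n$, which gives $0 \geq R_n([U_n]) - R_n([U_*]) \geq \inp{\grad R_n([U_*])}{\xi_n} + \frac{2}{5}\norm{\xi_n}_{H_n}^2$, and then passes from $\norm{\cdot}_{H_n}$ to $\norm{\cdot}_H$ via the relative eigenvalue bound $\lambda_{\min}(H^{-1/2}\circ H_n\circ H^{-1/2}) \geq 1/2$ established in Lemma~\ref{lem:emp_hess_conv}. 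That lemma is exactly what the $(32\mathcal{V}+4)\log(3k(d-k))$ and $(16\nu+8)\log(4/\delta)$ terms in~\eqref{eq:sample_complexity_1} buy, via a Matrix Bernstein plus Talagrand-type (Bousquet) concentration argument. You guess that this is ``most plausibly'' their role but then never invoke it: in your chain of inequalities the parameters $\mathcal{V}$ and $\nu$ never appear. A proof of Theorem~\ref{thm:finite} must actually use all three terms of the sample complexity; yours uses only the third.

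Your substitute --- Taylor-expanding the fluctuation $h(t) = (R-R_n)(\gamma(t))$ and absorbing the remainder back into $e_n$ via $\alpha_n \leq \tfrac{5}{2}e_n$ --- does not deliver the stated constants, and the reason is quantitative rather than cosmetic. The remainder is bounded by $\opnorm{\Sigma_n - \Sigma}\norm{\xi_n}^2 \leq \epsilon\,\alpha_n \leq \tfrac{5\epsilon}{2}e_n$ where $\epsilon = \opnorm{\Sigma_n - \Sigma}/(\lambda_k - \lambda_{k+1})$. But the third term of~\eqref{eq:sample_complexity_1} only certifies (via Lemma~\ref{lem:davis-kahan} and the constraint $\sin\theta_k < \sin(\pi/4)$) that $\epsilon \leq 1/(2\sqrt{2}) \approx 0.354$ on the localization event. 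Then $\tfrac{5\epsilon}{2} \approx 0.884$, so the absorption factor $1 - \tfrac{5\epsilon}{2} \approx 0.116$ and your $C' = (5/2)/(1 - 5\epsilon/2)^2 \approx 186$, not anything that would, after Markov, yield $75/(n\delta)$. Your claim that ``for $\epsilon$ small enough (which is exactly what the sample complexity buys)'' is the gap: the stated sample complexity makes $\epsilon$ bounded, not arbitrarily small, and this self-referential absorption is precisely the lossy step that the paper's use of $\lambda_{\min}(\widetilde{H}_n) \geq 1/2$ is designed to avoid. To recover the paper's argument, replace your Taylor expansion of $R - R_n$ with the self-concordance lower bound applied to $R_n$ itself, and invoke Lemma~\ref{lem:emp_hess_conv}; the upper bound of Corollary~\ref{cor:taylor_risk} applied to the population risk then converts $\norm{\xi_n}_H^2$ into the excess risk with the factor $3/4$ that produces $75 = \tfrac{3}{4}\cdot 25 \cdot 4$.
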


Focusing on (\ref{eq:non_asymp_bound}), Theorem \ref{thm:finite} says that, up to a worse dependence on $\delta$, the tight asymptotic upper bound on the $1-\delta$ quantile of the excess risk in Corollary \ref{cor:excess_risk} holds true for finitely many samples, provided that the sample size is larger than a certain distribution-dependent constant. Given the weak moment condition assumed, the dependence on $\delta$ in Theorem \ref{thm:finite} is likely unimprovable. It is possible to obtain a $\log(1/\delta)$ dependence as in Corollary \ref{cor:excess_risk} under the assumption that $X$ is bounded. We favour the above statement as it highlights an important shortcoming of ERM, and thus PCA: its performance degrades under heavy-tailed data - we refer the interested reader to the literature on robust estimation \cite[e.g.][]{lugosi2019mean}. The results in \cite{reiss2020nonasymptotic} are the most closely related, though they do not capture the right dependence on the distribution of $X$ identified in Corollary \ref{cor:excess_risk} and recovered in (\ref{eq:non_asymp_bound}). They however hold under different assumptions and can cover a wider range of sample sizes.

The sample size restriction (\ref{eq:sample_complexity_1}) consists of three terms. They arise from two distinct steps of the analysis: a global and a local one. The global one ensures that with high probability, $[U_{n}]$ is within a maximum principal angle of $\pi/4$ from $[U_{*}]$. This step is carried out using standard existing tools - namely the Davis-Kahan theorem \cite[e.g.][]{yu2015useful} - and is likely loose. It results in the third term of (\ref{eq:sample_complexity_1}), the largest of the three. The second step is a local analysis that uses our new self-concordance result from Proposition \ref{prop:self-concordance}, and is where our original contribution lies. This step results in the first two terms of (\ref{eq:sample_complexity_1}), the first of which typically dominates: their role is to ensure that the curvature of the empirical risk at $[U_{*}]$ is strong enough to force $[U_{n}]$ to be near it. 
Qualitatively, the explicit expression of $\mathcal{V}$ and $\nu$ in Remark \ref{rem:var_params} below indicate that they induce a quadratic dependence on the inverse of the eigengap on the sample size restriction. Example \ref{ex:gaussian} below gives an easily interpretable expression for $\mathcal{V}$ and $\nu$ in the special case when $X$ is Gaussian and centered.

\begin{remark}[Variance parameters]
\label{rem:var_params}
    The parameters $\mathcal{V}$ and $\nu$ appearing in Theorem \ref{thm:finite} admit an explicit expression, though it is quite involved in the general case. Recall from Theorem \ref{thm:asymptotics} the definition of the eigengaps $\delta_{ij} = \lambda_{j} - \lambda_{k+i}$. Let $\Xtilde$ denote the coordinates of $X$ in the basis of eigenvectors $(u_j)$, i.e.\ $\Xtilde_{j} \defeq \inp{X}{u_j}$. Define and recall from Theorem \ref{thm:asymptotics}
    \begin{equation*}
        \Gamma_{jsrp} \defeq \Exp\brack{\Xtilde_{j}\Xtilde_{s}\Xtilde_{r}\Xtilde_{p}}, \quad
        \Lambda_{ijts} = \Exp\brack{\Xtilde_{k+i}\Xtilde_{j}\Xtilde_{k+t}\Xtilde_{s}}, \quad \Omega_{itql} \defeq \Exp\brack{\Xtilde_{k+i}\Xtilde_{k+t}\Xtilde_{k+q}\Xtilde_{k+l}},
    \end{equation*}
    for $j,s,r,p \in [k]$ and $i,t,q,l \in [d-k]$. These form a subset of the fourth order moments of $\Xtilde$. Let
    \begin{equation}
    \label{eq:var}
        \mathcal{V} \defeq \sup_{\norm{M}_{F}=1} \sum_{j,r,p} a_{jrp}(M) \cdot \Gamma_{jjrp} - 2\sum_{i,j,t,s}b_{ijts}(M) \cdot \Lambda_{ijts} + \sum_{i,q,l} c_{iql}(M) \cdot \Omega_{iiql}, 
    \end{equation}
    where the coefficients are given by
    \begin{equation*}
        a_{jrp}(M) \defeq \sum_{i}\frac{m_{ir}m_{ip}}{\delta_{i,j}\sqrt{\delta_{ir}\delta_{ip}}}, \quad b_{ijts}(M) \defeq \frac{m_{is}m_{tj}}{\delta_{ij}\sqrt{\delta_{is}\delta_{tj}}}, \quad c_{iql}(M) \defeq \sum_{j} \frac{m_{qj}m_{lj}}{\delta_{ij}\sqrt{\delta_{qj}\delta_{lj}}}.
    \end{equation*}
    Finally, define the parameter
    \begin{equation}
    \label{eq:wimpy_var}
        \nu \defeq \sup_{\norm{M}_{F}=1} \sum_{j,s,r,p} \alpha_{jsrp}(M) \cdot \Gamma_{jsrp} - 2 \sum_{t,s,q,r} \beta_{tsqr}(M) \cdot \Lambda_{tsqr} + \sum_{i,t,q,l} \kappa_{itql}(M) \cdot \Omega_{itql},
    \end{equation}
    where the coefficients are given by, suppressing the dependence on $M$
    \begin{gather*}
        \alpha_{jsrp} \defeq \sum_{i,t}\frac{m_{ij}m_{is}m_{tr}m_{tp}}{\sqrt{\delta_{ij}\delta_{is}\delta_{tr}\delta_{tp}}}, \beta_{tsqr} \defeq \sum_{i,j}\frac{m_{tj}m_{is}m_{qj}m_{ir}}{\sqrt{\delta_{tj}\delta_{is}\delta_{qj}\delta_{ir}}},
        \kappa_{itql} \defeq \sum_{j,s} \frac{m_{ij}m_{tj}m_{qs}m_{ls}}{\sqrt{\delta_{ij}\delta_{tj}\delta_{qs}\delta_{ls}}}.
    \end{gather*}
    Typically, $\mathcal{V}$ is much larger than $\nu$, and we always have $\mathcal{V} \geq \nu$.
\end{remark}

\newpage
\begin{example}[Gaussian model]
\label{ex:gaussian}
    The variance parameters appearing in Theorem \ref{thm:finite} and defined in Remark \ref{rem:var_params} simplify greatly when $X$ has mean zero and its coordinates in a basis of eigenvectors of $\Sigma$ are independent. When $X \sim \mathcal{N}(0, \Sigma)$ we can compute them exactly in terms of the spectrum of $\Sigma$:
    \begin{gather}
        \mathcal{V} = \sum_{s=1}^{k} \frac{(1+\mathbb{I}[s = k])\lambda_k\lambda_s}{(\lambda_k - \lambda_{k+1})(\lambda_s - \lambda_{k+1})} + \sum_{t=1}^{d-k} \frac{(1 + \mathbb{I}[t = 1])\lambda_{k+1}\lambda_{k+t}}{(\lambda_k - \lambda_{k+1})(\lambda_k - \lambda_{k+t})}, \\
        \nu = \max_{i \in [d-k]} \max_{j \in [k]} \frac{\lambda^2_j + \lambda^2_{k+i}}{(\lambda_j - \lambda_{k+i})^2}.
    \end{gather}
\end{example}

\section{Discussion}
\label{sec:discussion}
We started this paper with a simple question: which property of the distribution of $X$ governs the performance of PCA as measured by its excess risk? In the large sample limit, and under very mild assumptions, we found an equally simple answer (see (\ref{eq:asymp_risk}) and Corollary \ref{cor:excess_risk}):
\begin{equation}
\label{eq:beautiful}
    \sum_{i=1}^{d-k}\sum_{j=1}^{k} \dfrac{\Exp\brack{\inp{u_{k+i}}{X}^{2}\inp{u_j}{X}^{2}}}{\lambda_j - \lambda_{k+i}}.
\end{equation}
Our second main contribution was to derive an upper bound on the critical sample size - that beyond which the excess risk of PCA is governed by (\ref{eq:beautiful}) (see (\ref{eq:sample_complexity_1})). In the general case, this bound admits an explicit expression in terms of the fourth moments of $X$ and the eigengaps appearing in (\ref{eq:beautiful}) (see Remark \ref{rem:var_params}), though its precise description is quite involved. In the special case of Gaussian $X$, we showed that the terms in this bound take on an exceptionally simple form (see Example \ref{ex:gaussian}).   

There are two main limitations of our results. The first is that they rely on the eigengap condition $\lambda_{k+1} - \lambda_{k} > 0$. While this is a mild assumption, it would be desirable to relax it, though this is quite challenging with our approach. To see why, note that without it, the minimizers of the reconstruction risk form a submanifold (itself a Grassmannian) of $\Gr(d, k)$ (see (\ref{eq:general_minimizers})). The classical theory of asymptotic statistics, upon which our results rely, does not immediately apply in such a degenerate setting \cite{van1996weak,van2000asymptotic}, and we leave this problem to future work.

The second limitation we would like to point out is related to Theorem \ref{thm:finite}. As described after its statement, the global component of the analysis leading to it is unlikely to be tight. To accurately capture the sample complexity of this step, we suspect that one would need to leverage analytical properties of the reconstruction risk as we do in our local analysis. This is however quite challenging as globally the reconstruction risk is ill-behaved and has for example many critical points \cite{sato2014optimization}. We anticipate that new insights are needed to fully capture the sample complexity of this global step.

Finally, let us mention that while the setting we consider is both classical and quite general, there are potentially interesting cases that our framework does not cover. For example, our approach does not directly apply to Kernel PCA \cite{scholkopf1998nonlinear} or functional PCA \cite{ramsay2002applied}, and extending it to these settings would require us to work with an infinite-dimensional analogue of the Grassmannian - a daunting task. Similarly our focus is on characterizing the performance of PCA on a fixed but unknown data distribution, and to do this in as much generality as possible. This in contrast with the literature on high-dimensional PCA which typically considers sequences of Gaussian problems indexed by their dimension, but provides potentially finer-grained results \cite[e.g.][]{johnstone2001distribution,paul2007asymptotics,cai2013sparse}. Finally, we mention that in practice, PCA is typically performed with an initial centering step. This neatly fits in our setting by changing the search space from the Grassmannian to the Grassmannian of affine subspaces \cite{lim2021grassmannian}, though some work is required to make this approach viable. 

Beyond addressing the limitations discussed above, there are a few directions that are potentially worth exploring. From an analysis standpoint, the output of PCA - along with its generalized version described in Remark \ref{rem:generalized} - is often used as a preprocessing step for downstream tasks such as regression or classification. It would be interesting to investigate whether the techniques developed in this paper can be extended to provide end-to-end guarantees for such two-stage procedures. Another intriguing direction would be to explore whether the self-concordance result established in Proposition \ref{prop:self-concordance} can be leveraged to obtain improved convergence guarantees for optimization algorithms applied to the block Rayleigh quotient (\ref{eq:block_rayleigh_quotient}), particularly for \citet{edelman1998geometry}'s version of Newton's method.



%




\newpage
\printbibliography

\newpage
\appendix

\section{Further background on the Grassmannian}
\label{apdx:grassmann}
In Section \ref{sec:setup}, we focused on the properties of the Grassmannian necessary to state our results. For the analysis we require more tools, which we briefly describe here. As before, we adopt a computation-oriented description, and refer the reader to Chapter 10 in \cite{boumal2023introduction} for a rigorous treatment.
\paragraph{Parallel transport.} Let $[U] \in \Gr(d, k)$ and $\xi \in T_{[U]}\Gr(d, k)$, and consider the curve,
\begin{equation*}
    \alpha(t) = UQ\cos(tS)Q^T + P\sin(tS)Q^{T},
\end{equation*}
for $t \in [0, 1]$, and where $\lift_{U}(\xi) = PSQ^T$ is a SVD. This is the geodesic starting at $[U]$ in direction $\xi$ as defined in (\ref{eq:geodesic}) at the level of representatives in $\St(d, k)$, i.e.\ $[\alpha(t)] = \Expo_{[U]}(t\xi)$. For each $t \in [0,1]$ and $\zeta \in T_{[U]} \Gr(d, k)$, define the map $P_{\xi, t}$ by
\begin{equation}
\label{eq:pt}
    \lift_{\alpha(t)}(P_{\xi, t}(\zeta)) \defeq (-UQ\sin(tS)P^T + P\cos(tS)P^T + I_{d} - PP^T)\lift_{U}(\zeta).
\end{equation}
This is the parallel transport map along the geodesic $[\alpha(t)]$. See equation (3.18) in \cite{bendokat2024grassmann}. For any fixed $t$, $\zeta \mapsto P_{\xi, t}(\zeta)$ is an invertible linear map that preserves the inner product between $T_{[U]}\Gr(d, k)$ and $T_{[\alpha(t)]}\Gr(d, k)$. Informally, $t \mapsto P_{\xi, t}(\zeta)$ transports $\zeta$ along the tangent spaces of $[\alpha(t)]$ such that it stays "constant", i.e.\ the derivative of $t \mapsto P_{\xi, t}(\zeta)$, properly defined, is zero.

For the rest of this section, let $\tilde{f}: \R^{d \times k} \to \R$ be such that $\tilde{f}(U) = \tilde{f}(UQ)$ for any orthogonal $Q \in \R^{k \times k}$ and $U \in \St(d, k)$. We define $f: \Gr(d, k) \to \R$ by $f([U]) \defeq \tilde{f}(U)$. We assume that $f$ is sufficiently smooth, under the proper notion of smoothness, to justify the computations below.

\paragraph{Gradients and Hessians.}  The Riemannian gradient of $f$ at $[U]$ is the element of $T_{[U]}\Gr(d, k)$ given by
\begin{equation}
\label{eq:grad}
    \lift_{U}(\grad f([U])) = (I_d - UU^T) \nabla \tilde{f}(U),
\end{equation}
where $\nabla \tilde{f}$ is the Euclidean gradient of $\tilde{f}$. See equation (9.84) in \cite{boumal2023introduction}. Similarly, the Riemannian Hessian of $f$ is the linear map $\Hess f([U]): T_{[U]}\Gr(d, k) \to T_{[U]}\Gr(d, k)$ given by
\begin{equation}
\label{eq:hess}
    \lift_{U}(\Hess f([U])[\xi]) = (I_{d} - UU^T) \nabla^{2}\tilde{f}(U)[\lift_{U}(\xi)] - \lift_{U}(\xi) U^T \nabla \tilde{f}(U),
\end{equation}
for any $\xi \in T_{[U]} \Gr(d, k)$, and where $\nabla^{2}\tilde{f}$ is the Euclidean Hessian of $\tilde{f}$, viewed as a linear map $\R^{d \times k} \to \R^{d \times k}$. See equation (9.86) in \cite{boumal2023introduction}.

\paragraph{Higher order derivatives.} The total $s$-th order covariant derivative of $f$ is denoted by $\nabla^{m} f$. See Definition 10.77 and Example 10.78 in \cite{boumal2023introduction}. It is a map that takes an $m+1$ tuple $([U], \xi_1, \dotsc, \xi_m)$ where $[U] \in \Gr(d, k)$ and $\xi_j \in T_{[U]}\Gr(d, k)$ for all $j \in [m]$ and outputs a real number. It is linear in each of its last $m$ arguments, and can be computed iteratively as follows: $\nabla^{0} f = f$ and
\begin{equation}
\label{eq:higher-order}
    \nabla^{m} f([U])(\xi_1, \dotsc, \xi_m) =  \left. \frac{\mathrm{d}}{\mathrm{d}t} \nabla^{m-1}f(\Expo_{[U]}(t\xi_m), P_{\xi_m, t}(\xi_1), \dotsc, P_{\xi_m, t}(\xi_{m-1})) \right|_{t=0}.
\end{equation}
See equation (10.53) in \cite{boumal2023introduction}. In particular, for $m=1$ and $m=2$ we have the identities
\begin{equation}
\label{eq:higher-to-hess}
    \nabla f([U], \xi) = \inp{\grad f([U])}{\xi}_{[U]}, \quad
    \nabla^2 f([U], \xi_1, \xi_2) = \inp{\Hess f([U])[\xi_1]}{\xi_2}_{[U]}.
\end{equation}
See Example 10.78 in \cite{boumal2023introduction}.

\paragraph{Taylor expansions.} Let $[U] \in \Gr(d, k)$ and $\xi \in T_{[U]}\Gr(d, k)$, and consider the geodesic $\gamma(t) = \Expo_{[U]}(t\xi)$ for $t \in [0, 1]$. Define the function $g: [0, 1] \to \R$ by $g(t) \defeq f(\gamma(t))$. Then we have
\begin{gather}
    \label{eq:der_to_higher}
    g'(t) = \nabla f(\gamma(t), P_{\xi}(\xi)), \quad g''(t) = \nabla^2 f(\gamma(t), P_{\xi, t}(\xi), P_{\xi, t}(\xi)), \\ g'''(t) = \nabla^3 f(\gamma(t), P_{\xi, t}(\xi), P_{\xi, t}(\xi), P_{\xi, t}(\xi)). \nonumber
\end{gather}
See Example 10.81 in \cite{boumal2023introduction}. By Taylor's theorem applied to $g$ around $0$ and (\ref{eq:higher-to-hess}) we have
\begin{multline}
\label{eq:taylor_fun}
    f(\Expo_{[U]}(\xi)) = f([U]) + \inp{\grad f([U])}{\xi}_{[U]} + \frac{1}{2}\inp{\Hess f([U])[\xi]}{\xi}_{[U]} \\ + \frac{s^3}{6}\nabla^3 f(\gamma(s), P_{\xi, s}(\xi), P_{\xi, s}(\xi), P_{\xi, s}(\xi)),
\end{multline}
for some $s \in [0, 1]$, and where we used the mean value form of the remainder. We also have the following Taylor expansion the gradient of $f$
\begin{multline}
\label{eq:taylor_grad}
    P_{\xi, 1}^{-1}(\grad f(\Expo_{[U]}(\xi))) = \grad f([U]) + \Hess f([U])[\xi] \\
    + \int_{0}^{1} \brace{P_{\xi, s}^{-1} \circ \Hess f(\gamma(s)) \circ P_{\xi, s} - \Hess f([U])}[\xi] \, \mathrm{d}s.
\end{multline}
See Step 2 in the proof of Proposition 10.55 in \cite{boumal2023introduction}.

\paragraph{Lipschitz continuous derivatives.} The function $f$ is said to have an $L$-Lipschitz continuous $m$-th derivative if
\begin{equation}
\label{eq:lip_deriv}
    \sup_{[U] \in \Gr(d, k)} \sup_{\norm{\xi_j} = 1, j \in [m+1]} \abs{\nabla^{m+1} f([U], \xi_1, \dotsc, \xi_m)} \leq L.
\end{equation}
See Proposition 10.83 in \cite{boumal2023introduction}. For the special case $m=2$, this is equivalent to Hessian $L$-Lipschitzness, which states that for all $[U] \in \Gr(d, k)$ and $\xi \in T_{[U]}\Gr(d, k)$
\begin{equation*}
    \opnorm{P_{\xi, 1}^{-1} \circ \Hess f(\Expo_{[U]}(\xi)) \circ P_{\xi, 1} - \Hess f([U])} \leq L \norm{\xi}.
\end{equation*}
See Exercise 10.89 in \cite{boumal2023introduction}.


\section{Analysis of the block Rayleigh quotient}
\label{apdx:block}
In this section, we state and prove the two main technical results behind Theorems \ref{thm:asymptotics} and \ref{thm:finite}. We state them here in terms of the negative block Rayleigh quotient (\ref{eq:block_rayleigh_quotient}). We will use them in subsequent sections on the empirical and population reconstruction risk, which we recall from Section \ref{sec:self-concordance} are up to an additive constant equal to the negative block Rayleigh quotient of $\Sigma_n$ and $\Sigma$ respectively.

Recall the setup of Section \ref{sec:self-concordance}. We have a symmetric matrix $A \in \R^{d \times d}$ and its associated negative block Rayleigh quotient $F: \Gr(d, k) \to \R$ given by
\begin{equation*}
    F([V]) = -(1/2) \Tr(V^TAV).
\end{equation*}
In the context of Appendix \ref{apdx:grassmann}, this function can be obtained from the one defined on Euclidean space $\tilde{F}: \R^{d \times k} \to \R$ given by $\tilde{F}(B) = -(1/2) \Tr(B^TAB)$. Hence the Riemannian gradient and Hessian of $F$ are given by, using (\ref{eq:grad}) and (\ref{eq:hess}),
\begin{align}
    \lift_{V}(\grad F([V])) &= -(I_d - VV^T)AV \label{eq:grad_block}\\
    \lift_{V}(\Hess F([V])[\xi]) &= \Delta V^TAV - (I_{d} - VV^T) A\Delta \label{eq:hess_block}
\end{align}
where $\Delta = \lift_{V}(\xi)$. 

\subsection{Hessian Lipschitzness of the block Rayleigh quotient}
Recall the discussion on the higher order derivatives $\nabla^s F$ of $F$ from Appendix \ref{apdx:grassmann}.
\begin{proposition}
\label{prop:lipschitz}
    For all $[V] \in \Gr(d, k)$ and $\xi_1, \xi_2, \xi_3 \in T_{[V]}\Gr(d, k)$, it holds that
    \begin{equation*}
        \nabla^{3} F([V], \xi_1, \xi_2, \xi_3) = \inp{A}{V[\Delta_1^T\Delta_2\Delta_3^T + \Delta_2^{T}\Delta_1\Delta_3^{T} + \Delta_3^{T}\Delta_1\Delta_2^T + \Delta_3^{T}\Delta_2\Delta_1^{T}]}_{F},
    \end{equation*}
    where $\Delta_j = \lift_{V}(\xi_j)$ for $j \in \brace{1, 2, 3}$. As a consequence
    \begin{equation*}
        \sup_{[V] \in \Gr(d, k)} \sup_{\norm{\xi_1}=1, \norm{\xi_2}=1, \norm{\xi_3}=1} \abs{\nabla^{3} F([V], \xi_1, \xi_2, \xi_3)} \leq 4\norm{A}_{F},
    \end{equation*}
    and for all $[V] \in \Gr(d, k)$ and $\xi \in T_{[V]}\Gr(d, k)$,
    \begin{equation*}
        \opnorm{P_{\xi, 1}^{-1} \circ \Hess F(\Expo_{[V]}(\xi)) \circ P_{\xi, 1} - \Hess F([V])} \leq 4\norm{A}_{F} \norm{\xi}.
    \end{equation*}
\end{proposition}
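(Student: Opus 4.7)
My plan is to derive the closed-form expression for $\nabla^3 F$ directly from the inductive definition (\ref{eq:higher-order}) and then read off the two consequences essentially for free. Fix $[V] \in \Gr(d,k)$, tangent vectors $\xi_1, \xi_2, \xi_3$ with horizontal lifts $\Delta_i \defeq \lift_V(\xi_i)$, and work along the geodesic $\gamma(t) \defeq \Expo_{[V]}(t\xi_3)$ with canonical representative $B(t)$ from (\ref{eq:geodesic}). Writing $\Delta_i(t) \defeq \lift_{B(t)}(P_{\xi_3, t}(\xi_i))$ for $i \in \{1, 2\}$, substituting the Hessian formula (\ref{eq:hess_block}) into $\inp{\Hess F(\gamma(t))[P_{\xi_3,t}(\xi_1)]}{P_{\xi_3,t}(\xi_2)}_{\gamma(t)}$ and using horizontality, $B(t)^T \Delta_i(t) = 0$, to drop the projector summand, definition (\ref{eq:higher-order}) gives $\nabla^3 F([V])(\xi_1,\xi_2,\xi_3) = h'(0)$ where
\begin{equation*}
    h(t) \defeq \Tr\paren*{\Delta_2(t)^T \Delta_1(t)\, B(t)^T A B(t)} - \Tr\paren*{\Delta_2(t)^T A \Delta_1(t)}.
\end{equation*}

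To evaluate $h'(0)$, I will use the initial data $B(0)=V$, $B'(0) = \Delta_3$, $\Delta_i(0) = \Delta_i$, and differentiate (\ref{eq:pt}) at $t=0$ to obtain $\Delta_i'(0) = -V \Delta_3^T \Delta_i$. The product rule produces five summands; two of them vanish because each contains a factor equal to $V^T \Delta_1$ or $\Delta_2^T V$, both zero by horizontality. The three surviving summands yield four traces that, after cyclicity and one application of the symmetric-matrix identity $\Tr(AM) = \Tr(AM^T)$ per trace to recast them in the standardized form $\Tr(AV[\cdot])$, assemble into
\begin{equation*}
    h'(0) = \Tr\paren*{AV \brack*{\Delta_1^T \Delta_2 \Delta_3^T + \Delta_2^T \Delta_1 \Delta_3^T + \Delta_3^T \Delta_1 \Delta_2^T + \Delta_3^T \Delta_2 \Delta_1^T}}.
\end{equation*}

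For the supremum bound, I will apply $\abs{\Tr(AM)} \leq \norm{A}_F \norm{M}_F$ to each of the four terms together with $\norm{V \Delta_a^T \Delta_b \Delta_c^T}_F \leq \opnorm{V}\opnorm{\Delta_a}\opnorm{\Delta_b}\norm{\Delta_c}_F$, and use $\opnorm{V} = 1$, $\opnorm{\Delta_j} \leq \norm{\Delta_j}_F$, and $\norm{\Delta_j}_F = \norm{\xi_j}$ from (\ref{eq:lift}); on unit tangent vectors this yields $4\norm{A}_F$. The Hessian Lipschitz inequality then follows by invoking the equivalence recorded in Appendix \ref{apdx:grassmann} just beneath (\ref{eq:lip_deriv}) between the sup bound on $\nabla^3 F$ and the parallel-transport Lipschitz condition on $\Hess F$, specialized to $L = 4\norm{A}_F$.

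The main obstacle will be the bookkeeping inside the product rule in the second step: tracking all the summands produced by differentiating the two lifts and the middle factor $B(t)^T A B(t)$, identifying which vanish by horizontality, and applying symmetry of $A$ together with cyclicity of the trace exactly enough times to collapse the survivors into the stated four-term expression.
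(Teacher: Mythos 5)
Your proposal is correct and follows essentially the same route as the paper's proof: both expand $\nabla^3 F$ via the inductive definition along the geodesic with parallel-transported arguments, use (\ref{eq:hess_block}) and horizontality to drop the projector, differentiate at $t=0$ with $\dot V(0)=\Delta_3$ and $\dot\Delta_i(0)=-V\Delta_3^T\Delta_i$, and collapse the surviving traces into the stated four-term formula, finishing with Cauchy--Schwarz/submultiplicativity and the Appendix \ref{apdx:grassmann} equivalence for the Lipschitz statement. The only differences are notational (your $B(t)$ is the paper's $V(t)$) and that you evaluate the derivative directly at $t=0$ rather than at general $t$.
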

\begin{proof}
    Fix $[V] \in \Gr(d, k)$ and $\xi_1, \xi_2, \xi_3 \in T_{[V]}\Gr(d, k)$. Then we have by (\ref{eq:higher-order})
    \begin{equation}
    \label{eq:pf_prop_2_1}
        \nabla^{3} F([V], \xi_1, \xi_2, \xi_3) = \left. \frac{\mathrm{d}}{\mathrm{d}t} \nabla^{2}F(\Expo_{[V]}(t\xi_3), P_{\xi_3, t}(\xi_1), P_{\xi_3, t}(\xi_{2})) \right|_{t=0}.
    \end{equation}
    Let $\Delta_3 \defeq \lift_{V}(\xi_3) = PSQ^T$ be a SVD, $\Delta_1 \defeq \lift_{V}(\xi_1)$, and $\Delta_2 \defeq \lift_{V}(\xi_2)$. Define
    \begin{gather*}
        V(t) \defeq VQ \cos(tS) Q^T + P \sin(tS) Q^T, \quad 
        B(t) \defeq -VQ\sin(tS)P^T + P\cos(tS)P^T - PP^T, \\
        \Delta_1(t) \defeq \lift_{V(t)}(P_{\xi_3, t}(\xi_1)) = B(t)\Delta_1  + \Delta_1, \quad
        \Delta_2(t) \defeq \lift_{V(t)}(P_{\xi_3, t}(\xi_2)) = B(t)\Delta_2 + \Delta_2,
    \end{gather*}
    where $[V(t)] = \Expo_{[V]}(t\xi_3)$ by (\ref{eq:geodesic}) and where we used (\ref{eq:pt}) for the parallel transport map. Using these definitions, (\ref{eq:higher-to-hess}), (\ref{eq:hess_block}), and the fact that the map $\lift$ preserves the inner product
    \begin{equation}
    \label{eq:pf_prop_2_2}
        \nabla^{2}F(\Expo_{[U]}(t\xi_3), P_{\xi_3, t}(\xi_1), P_{\xi_3, t}(\xi_{2})) = \inp{\Delta_1(t)V(t)^{T}AV(t) - A\Delta_1(t)}{\Delta_2(t)}_{F},
    \end{equation}
    where we used the identity $(I_d - V(t)V(t)^T)\Delta_2(t) = \Delta_2(t)$, which holds since $V(t)^{T} \Delta_2(t) = 0$ by (\ref{eq:horizontal_space}), to simplify the resulting expression. Now taking derivatives with respect to $t$, dropping the dependence on $t$ in the notation, and writing $\dot{V}$ for the derivative of $V(t)$, we get
    \begin{multline}
    \label{eq:pf_prop_2_3}
        \frac{\mathrm{d}}{\mathrm{d}t} \inp{\Delta_1V^{T}AV - A\Delta_1}{\Delta_2}_{F} = \Tr(\dot{V}^TAV\Delta_1^T\Delta_2) + \Tr(V^TA\dot{V}\Delta_1^T\Delta_2) + \Tr(V^TAV\dot\Delta_1^T \Delta_2) \\
        + \Tr(V^TAV\Delta_1^{T}\dot{\Delta_2}) - \Tr(\dot\Delta_1^T A \Delta_2) - \Tr(\Delta_1^{T}A\dot\Delta_2).
    \end{multline}
    Noting that $\dot B(t) = -VQ\cos(tS)SP^T - P\sin(tS)SP^T$, we get 
    \begin{equation*}
        \dot B(0) = -VQSP^T = -V\Delta_3^T, \quad \dot\Delta_1(0) = -V\Delta_3^T\Delta_1, \quad \dot\Delta_2(0) = -V\Delta_3^T\Delta_2.
    \end{equation*}
    Replacing in (\ref{eq:pf_prop_2_3}) and simplifying, then using (\ref{eq:pf_prop_2_2}) and (\ref{eq:pf_prop_2_1}) finishes the proof of the first statement. The second follows from the Cauchy-Schwarz inequality, the inequality $\norm{VC}_{F} \leq \opnorm{V}\norm{C}_{F}$ and $\opnorm{V} = 1$, the submultiplicativity of the Frobenius norm, and the fact that $\norm{\lift_{V}(\xi)}_{F} = \norm{\xi}$ for all $\xi \in T_{[U]}\Gr(d, k)$. See the end of Appendix \ref{apdx:grassmann} for the last statement.
\end{proof}

\subsection{Generalized self-concordance of the block Rayleigh quotient}
\begin{proof}[Proof of Proposition \ref{prop:self-concordance}]
    Recall that $(v_j)_{j=1}^{d}$ is a basis of eigenvectors of $A$ ordered non-increasingly according to their corresponding eigenvalues $(\mu_{j})_{j=1}^{d}$. Let $V_{*} \in \St(d, k)$ be the matrix whose $j$-th column is $v_j$. We start with the first statement, and with the case where $\gamma(t) = \Expo_{[V_{*}]}(t \Log_{[V_{*}]}([V]))$. Define $\xi \defeq \Log_{[V_{*}]}([V])$, and let $\lift_{V_{*}}(\xi) = PSQ^T$ be a SVD. Let $r$ be the rank of $\lift_{V_{*}}(\xi)$, and let $P_{r}$ be the $d \times k$ matrix whose first $r$ columns match those of $P$, and whose last $k-r$ columns are $0$. Then we have by (\ref{eq:geodesic})
    \begin{equation}
    \label{eq:temp_2}
        \gamma(t) = [V_{*}Q\cos(tS) + P_r\sin(tS)],
    \end{equation}
    where we used that the post-multiplication by $Q^T$, an orthogonal matrix, can be dropped without affecting the equivalence class, and we used that $P\sin(tS) = P_r \sin(tS)$ since the last $k-r$ singular values in $S$ are zero. Now let $V(t) = V_{*}Q\cos(tS) + P_r \sin(tS)$. Then we have
    \begin{equation*}
        g(t) = F(\gamma(t)) = -\frac{1}{2}\Tr(V(t)^{T}AV(t)),
    \end{equation*}
    and its derivatives are given by
    \begin{align*}
        g'(t) &= -\Tr\paren{\dot{V}(t)^{T}AV(t)}, \\
        g''(t) &= -\Tr\paren{\ddot{V}(t)^{T}AV(t)} - \Tr\paren{\dot{V}(t)^{T}A\dot V(t)}, \\
        g'''(t) &= -\Tr\paren{\dddot{V}(t)^{T}AV(t)} - 3 \Tr\paren{\ddot{V}(t)^{T}A\dot{V}(t)}.
    \end{align*}
    A straightforward computation shows that
    \begin{equation*}
        \dot{V}(t) = (-V_{*}Q\sin(tS) + P_r\cos(tS))S, \quad
        \ddot{V}(t) = -V(t)S^2, \quad
        \dddot{V}(t) = -\dot{V}(t)S^{2}.
    \end{equation*}
    Replacing yields
    \begin{equation}
    \label{eq:pf_prop1_1}
        g''(t) = \Tr\paren{S^{2} V(t)^{T}AV(t)} - \Tr(\dot V(t)^{T} A \dot V(t)), \quad g'''(t) = 4\Tr\paren{S^2\dot{V}(t)^{T}AV(t)}.
    \end{equation}
    
    To further simplify this expression, let $V_{*}^{\perp} \in \St(d, d-k)$ be the matrix whose $j$-th column is $v_{k+j}$. Then since $V_{*}^{T} \lift_{V_{*}}(\xi) = 0$ by (\ref{eq:horizontal_space}) and by definition of $P_r$, we have $V_{*}^{T}P_r = 0$. Hence there exists $\Gamma \in \R^{(d-k)\times k}$ whose first $r$ columns are orthonormal and last $k-r$ columns are zero such that $P_{r} = V_{*}^{\perp} \Gamma$. Finally, we may write an eigendecomposition of $A$ as
    \begin{equation*}
        A = [V_* \mid V_{*}^{\perp}] \cdot D \cdot [V_{*} \mid V_{*}^{\perp}]^{T},
    \end{equation*}
    where $D = \operatorname{diag}(\mu_1, \dotsc, \mu_d)$. Performing block-wise matrix multiplication we obtain the identities
    \begin{equation}
    \label{eq:pf_prop1_2}
        V_{*}^{T}AV_{*} = D_{\leq k}, \quad V_{*}^{T}AP_r = 0, \quad P_r^{T}AP_r = \Gamma^{T} D_{> k} \Gamma.
    \end{equation}
    where $D_{<k} = \operatorname{diag}(d_1, \dotsc, d_k)$ and $D_{>k} = \operatorname{diag}(d_{k+1}, \dotsc, d_d)$. Replacing in (\ref{eq:pf_prop1_1}) $V(t)$ and $\dot V(t)$ by their expressions and using the identities (\ref{eq:pf_prop1_2}) yields 
    \begin{equation}
    \label{eq:pf_prop1_3}
        g''(t) = \Tr\paren{S^{2}\cos(2tS)\brace{Q^{T}D_{\leq k}Q - \Gamma^{T}D_{>k}\Gamma}},
    \end{equation}
    and
    \begin{align*}
        \abs{g'''(t)} &= 2\abs{\Tr\paren{S^{3} \sin(2tS)\brace{Q^TD_{\leq k}Q - \Gamma^{T}D_{>k}\Gamma}}} \\
        &= 2\abs{\Tr\paren{S\tan(2tS) \cdot S \sqrt{\cos(2tS)}\brace{Q^{T}D_{\leq k}Q - \Gamma^{T}D_{>k}\Gamma} S\sqrt{\cos(2tS)}}} \\
        &= 2\abs{\inp{S\tan(2tS)}{S \sqrt{\cos(2tS)}\brace{Q^{T}D_{\leq k}Q - \Gamma^{T}D_{>k}\Gamma} S\sqrt{\cos(2tS)}}_{F}} \\
        &\leq 2 \norm{S\tan(2tS)}_{\infty} \norm{S\sqrt{\cos(2tS)} \brace{Q^{T}D_{\leq k}Q - \Gamma^{T}D_{>k}\Gamma}S\sqrt{\cos(2tS)}}_{1} \\
        &= 2 \norm{S\tan(2tS)}_{\infty} \cdot g''(t),
    \end{align*}
    where in the penultimate line we have used Holder's inequality for Schatten $p$-norms, and in the last we have used the fact that the matrix $S\sqrt{\cos(2tS)} \brace{Q^{T}D_{\leq k}Q - \Gamma^{T}D_{>k}\Gamma}S\sqrt{\cos(2tS)}$ is positive-semidefinite, so its nuclear norm equals its trace. To see why the latter matrix is positive-semidefinite, note that for any $k$-dimensional unit vector $y$,
    \begin{equation*}
        y^{T}Q^TD_{\leq k}Qy \geq \mu_{k}, \quad y^T\Gamma^{T} D_{>k}\Gamma y \leq \mu_{k+1} 
    \end{equation*}
    where these inequalities follow from the fact that $Qy$ is unit norm, and $\Gamma y$ is at most unit norm by definition of $\Gamma$, and furthermore the singular values in $S$ correspond to the principal angles between $[V_{*}]$ and $[V]$, which by assumption are less than $\pi/4$, so that $\cos(2tS) > 0$. This last observation also shows that $2\norm{S \tan(2tS)}_{\infty} = 2\theta \tan(2t\theta)$ where $\theta$ is the maximum principal angle between $[V_{*}]$ and $[V]$. This concludes the proof of the first statement for the case $\gamma(t) = \Expo_{[V_{*}]}(t \Log_{[V_{*}]}([V]))$. 

    For the second statement, we first show that $g''(t) > 0$ for all $t \in [0, 1]$. Indeed, expanding the trace expression in (\ref{eq:pf_prop1_3}) we obtain
    \begin{align*}
        g''(t) &= \sum_{j=1}^{k} S^2_j \cos(2t S_j) \cdot \bigg(\sum_{i=1}^{k} \mu_i Q_{ji}^{2} - \sum_{i=1}^{d-k} \mu_{k+i} \Gamma_{ji}^{2}\bigg) \\
        &\geq \sum_{j=1}^{k} S_j^2 \cos(2tS_j) \cdot (\mu_{k} - \mu_{k+1}) \\
        &> 0.
    \end{align*}
    where in the second line we used that the columns of $Q$ are orthonormal, and that the columns of $\Gamma$ are either of length one or zero, and in the last line we used the assumption $\mu_{k} - \mu_{k+1} > 0$. We use this result to justify rearranging the first statement of Proposition \ref{prop:self-concordance} as follows
    \begin{equation*}
        -2\theta \tan(2t\theta) \leq \frac{\mathrm{d}}{\mathrm{d}t} \log(g''(t)) = \frac{g'''(t)}{g''(t)} \leq 2\theta \tan(2t\theta)  
    \end{equation*}
    Integrating once, exponentiating, then integrating twice yields the second statement in Proposition \ref{prop:self-concordance}. See the proof of Lemma 1 in \cite{bach2010self} for a very similar calculation.
    
    Finally, the case $\gamma(t) = \Expo_{[V]}(t\Log_{[V]}([V_*]))$ follows from the first one using the identity  $\Expo_{[V]}(t\Log_{[V]}([V_*])) = \Expo_{[V_*]}((1-t)\Log_{[V_*]}([V]))$. This holds since both curves parametrize the unique length-minimizing geodesic from $[V]$ to $[V_*]$.
\end{proof}

\begin{proof}[Proof of Corollary \ref{cor:taylor}]
    This is an immediate consequence of Proposition \ref{prop:self-concordance}. In particular, it is enough to replace the occurrences of $g'(0)$ and $g''(0)$ with their expressions in terms of the gradient and Hessian of $F$ using (\ref{eq:der_to_higher}) and (\ref{eq:higher-to-hess}) and using the coarse bounds
    \begin{equation*}
        \frac{\sin^2(\theta)}{\theta^2} \geq \frac{4}{5}, \quad\quad \psi(\theta) \leq \frac{3}{2},
    \end{equation*}
    valid for $\theta \in (0, \pi/4)$.
\end{proof}

\section{Technical lemmas and computations}
\label{apdx:lemmas}
This section collects several supporting lemmas and explicit computations used in the proofs of the main results in Section \ref{apdx:proofs}. Throughout we make the assumption that $\Exp\brack{\norm{X}_2^2} < \infty$, so that $\Sigma$ is well-defined. Recall also the definition of $U_{*}^{\perp}$ from the second paragraph of Section \ref{sec:asymptotics}.

\subsection{Gradient and Hessian computations}
Define the function $\tilde{\ell}: \R^{d \times k} \times \R^{d} \to [0, \infty)$ by
\begin{equation*}
    \tilde{\ell}(U, x) \defeq \frac{1}{2} \norm{x - UU^Tx}_2^2.
\end{equation*}
The reconstruction loss $\ell: \Gr(d, k) \times \R^{d} \to [0, \infty)$ is given by
\begin{equation}
\label{eq:rec_loss}
    \ell([U], x) \defeq \tilde{\ell}(U, x),
\end{equation}
which is well defined, as the right-hand side does not depend on the choice of representative in $[U]$. We thus have by (\ref{eq:grad}) and (\ref{eq:hess}), for any $[U] \in \Gr(d, k)$ and $\xi \in T_{[U]} \Gr(d, k)$
\begin{gather}
    \lift_{U}(\grad \ell([U], x)) = -(I_{d} - UU^T)xx^{T}U, \label{eq:grad_loss} \\ 
    \lift_{U}(\Hess \ell([U], x)[\xi]) = \Delta U^Txx^TU - (I_{d} - UU^T)xx^T \Delta, \label{eq:hess_loss} 
\end{gather}
where $\Delta = \lift_{U}(\xi)$, and where we computed the Euclidean gradient and Hessian of $\tilde{\ell}$ with respect to $U$ to obtain these expressions. We can express the empirical and population risk defined in Section \ref{sec:setup} in terms of the reconstruction loss (\ref{eq:rec_loss}) as
\begin{equation*}
    R_{n}([U]) = \frac{1}{n} \sum_{i=1}^{n} \ell([U], X_i), \quad R([U]) = \Exp\brack{\ell([U], X)}.
\end{equation*}
By linearity of the $\grad$ and $\Hess$ operators along with (\ref{eq:grad_loss}) and (\ref{eq:hess_loss}), or alternatively by working with $\widetilde{R}$ and $\widetilde{R}_n$ defined in (\ref{eq:population_risk}) and (\ref{eq:empirical_risk}) and formulas (\ref{eq:grad}) and (\ref{eq:hess}), the gradients of $R$ and $R_n$ satisfy
\begin{gather}
    \grad R([U]) = \Exp\brack{\grad \ell([U], X))}, \label{eq:grad_risk} \\ 
    \grad R_n([U])= \frac{1}{n}\sum_{i=1}^{n} \grad \ell([U], X_i), \label{eq:grad_emp_risk}      
\end{gather}
and similarly for their Hessians
\begin{gather}
    \Hess R([U])[\xi] = \Exp\brack{\Hess \ell([U], X)[\xi]}, \label{eq:hess_risk} \\
    \Hess R_n([U])[\xi]) = \frac{1}{n}\sum_{i=1}^{n}\Hess \ell([U], X_i)[\xi]. \label{eq:hess_emp_risk}
\end{gather}
\begin{lemma}
\label{lem:eigen}
    Let $i \in [d-k]$, $j \in [k]$, and $E_{ij} \in \R^{(d-k) \times k}$ be the matrix whose $(i,j)$-th entry is one and its remaining entries are zero. Define $\xi_{ij}$ by $\lift_{U_*}(\xi_{ij}) = U_{*}^{\perp}E_{i,j}$. Then
    \begin{equation*}
        \Hess R([U_{*}])[\xi_{ij}] = (\lambda_j - \lambda_{k+i}) \cdot \xi_{ij}, 
    \end{equation*}
    i.e. $(\xi_{ij})$ are a basis of eigenvectors of $\Hess R([U_{*}])$ and their associated eigenvalues are $(\lambda_j - \lambda_{k+i})$.
\end{lemma}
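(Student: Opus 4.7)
The plan is to use the explicit formulas for the Riemannian Hessian derived earlier in the appendix together with the fact that $U_{*}$ and $U_{*}^{\perp}$ jointly diagonalize $\Sigma$. Concretely, (\ref{eq:hess_risk}) together with (\ref{eq:hess_loss}) gives
\begin{equation*}
    \lift_{U}(\Hess R([U])[\xi]) = \Delta \, U^{T}\Sigma U - (I_{d} - UU^{T}) \Sigma \Delta,
\end{equation*}
where $\Delta = \lift_{U}(\xi)$. I will specialize this to $U = U_{*}$ and $\Delta = U_{*}^{\perp}E_{ij}$ and simplify each of the two summands by using the spectral decomposition of $\Sigma$ in the basis $[U_{*} \mid U_{*}^{\perp}]$.

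First, since $(u_{j})_{j=1}^{d}$ is an orthonormal basis of eigenvectors of $\Sigma$, the matrix $U_{*}^{T}\Sigma U_{*}$ equals the diagonal matrix $D_{\leq k} = \mathrm{diag}(\lambda_{1}, \ldots, \lambda_{k})$, and similarly $(U_{*}^{\perp})^{T}\Sigma U_{*}^{\perp} = D_{>k} = \mathrm{diag}(\lambda_{k+1}, \ldots, \lambda_{d})$. Also, $I_{d} - U_{*}U_{*}^{T} = U_{*}^{\perp}(U_{*}^{\perp})^{T}$ because $[U_{*} \mid U_{*}^{\perp}]$ is an orthogonal matrix. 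The first summand becomes $U_{*}^{\perp} E_{ij} D_{\leq k} = \lambda_{j} \, U_{*}^{\perp} E_{ij}$, since right-multiplication of $E_{ij}$ by $D_{\leq k}$ scales its unique nonzero column (the $j$-th) by $\lambda_{j}$. The second summand becomes $U_{*}^{\perp}(U_{*}^{\perp})^{T}\Sigma U_{*}^{\perp} E_{ij} = U_{*}^{\perp} D_{>k} E_{ij} = \lambda_{k+i} \, U_{*}^{\perp} E_{ij}$, since left-multiplication of $E_{ij}$ by $D_{>k}$ scales its unique nonzero row (the $i$-th) by $\lambda_{k+i}$.

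Combining the two, $\lift_{U_{*}}(\Hess R([U_{*}])[\xi_{ij}]) = (\lambda_{j} - \lambda_{k+i}) \, U_{*}^{\perp}E_{ij} = (\lambda_{j} - \lambda_{k+i}) \lift_{U_{*}}(\xi_{ij})$, and invertibility and linearity of $\lift_{U_{*}}$ yields the claim. To verify that $\xi_{ij}$ is a legitimate tangent vector (i.e.\ that $U_{*}^{\perp}E_{ij} \in H_{U_{*}}$), it suffices to note that $U_{*}^{T} U_{*}^{\perp} = 0$, so $U_{*}^{T}(U_{*}^{\perp}E_{ij}) = 0$, matching the definition (\ref{eq:horizontal_space}). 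Finally, the family $\{U_{*}^{\perp}E_{ij}\}_{i \in [d-k], j \in [k]}$ is manifestly an orthonormal basis of $H_{U_{*}} \cong \mathbb{R}^{(d-k)\times k}$ under the Frobenius inner product, so via $\lift_{U_{*}}$ the family $(\xi_{ij})$ is an orthonormal basis of $T_{[U_{*}]}\Gr(d,k)$, confirming the eigenbasis claim. I do not anticipate any serious obstacle here; the only care needed is in tracking which side the diagonal matrices act on when multiplying $E_{ij}$.
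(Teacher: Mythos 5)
Your proof is correct and follows the same route as the paper: specialize the explicit Hessian formula from (\ref{eq:hess_risk})--(\ref{eq:hess_loss}) to $U_*$ and $\Delta = U_*^\perp E_{ij}$, use $I_d - U_*U_*^T = U_*^\perp U_*^{\perp,T}$ and the spectral decomposition of $\Sigma$ in the $[U_*\mid U_*^\perp]$ basis, and read off the eigenvalue $\lambda_j - \lambda_{k+i}$ from right-multiplication by $D_{\leq k}$ and left-multiplication by $D_{>k}$. The only additions over the paper's proof are the (welcome but unnecessary) checks that $\xi_{ij}$ is a valid tangent vector and that $(\xi_{ij})$ forms an orthonormal basis.
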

\begin{proof}
    We have by (\ref{eq:hess_risk}) and (\ref{eq:hess_loss})
    \begin{align*}
        \lift_{U_{*}}(\Hess R([U_{*}])[\xi_{ij}]) &= U_{*}^{\perp}(E_{i,j} U_{*}^{T}\Sigma U_{*} - U_{*}^{\perp, T} \Sigma U_{*}^{\perp}E_{i, j}) \\
        &= U_{*}^{\perp}(E_{i,j} \Lambda_{\leq k} - \Lambda_{> k} E_{i,j}) \\
        &= (\lambda_{j} - \lambda_{k+i}) \cdot U_{*}^{\perp}E_{i,j}
    \end{align*}
    where in the first line we used the identity $(I - U_{*}U_{*}^{T}) = U_{*}^{\perp} U_{*}^{\perp, T}$, and in the second we expanded $\Sigma = [U_{*} \mid U_{*}^{T}] \cdot \Lambda \cdot [U_{*} \mid U_{*}^{T}]^T$ and performed block-wise matrix multiplication.
\end{proof}

\begin{corollary}
\label{cor:inv}
    Assume that $\lambda_{k} > \lambda_{k+1}$. Then $\Hess R([U_*])$ is positive definite, and for any $\xi \in T_{[U_{*}]}\Gr(d, k)$
    \begin{equation*}
        \lift_{U_{*}}(\Hess R([U_*])^{-1}[\xi]) = U_{*}^{\perp}C', \quad\quad \lift_{U_{*}}(\Hess R([U_{*}])^{-1/2}[\xi]) = U_{*}^{\perp} C'',
    \end{equation*}
    where $\lift_{U_{*}}(\xi) = U_{*}^{\perp} C$ and 
    \begin{equation*}
        C'_{ij} = \frac{C_{ij}}{\lambda_j - \lambda_{k+i}}, \quad\quad C''_{ij} = \frac{C_{ij}}{\sqrt{\lambda_{j} - \lambda_{k+i}}}.
    \end{equation*}
\end{corollary}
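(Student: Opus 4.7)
The plan is to leverage Lemma \ref{lem:eigen} directly: it already diagonalizes $\Hess R([U_{*}])$ in the basis $(\xi_{ij})$, so the corollary reduces to (i) verifying that $(\xi_{ij})$ is orthonormal and (ii) unwinding the inverse and inverse square-root through the lift.

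First I would check that the composite map $\Gamma \mapsto \xi$ defined by $\lift_{U_{*}}(\xi) = U_{*}^{\perp} \Gamma$ is a linear isometry from $\R^{(d-k) \times k}$ onto $T_{[U_{*}]}\Gr(d, k)$. Linearity and bijectivity onto $H_{U_{*}}$ are noted in the second paragraph of Section \ref{sec:asymptotics} (since $U_{*}^{\perp}$ has orthonormal columns, $\inp{U_{*}^{\perp} \Gamma_{1}}{U_{*}^{\perp}\Gamma_{2}}_{F} = \inp{\Gamma_{1}}{\Gamma_{2}}_{F}$), and composition with $\lift_{U_{*}}^{-1}$ preserves the inner product by (\ref{eq:lift}). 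Since $(E_{ij})_{i \in [d-k], j \in [k]}$ is an orthonormal basis of $\R^{(d-k)\times k}$ under the Frobenius inner product, its image $(\xi_{ij})$ is an orthonormal basis of $T_{[U_{*}]}\Gr(d, k)$.

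Second, Lemma \ref{lem:eigen} asserts $\Hess R([U_{*}])[\xi_{ij}] = (\lambda_{j} - \lambda_{k+i})\, \xi_{ij}$, so $(\xi_{ij})$ is an eigenbasis. Under the assumption $\lambda_{k} > \lambda_{k+1}$, every eigenvalue satisfies $\lambda_{j} - \lambda_{k+i} \geq \lambda_{k} - \lambda_{k+1} > 0$ for $j \in [k]$ and $i \in [d-k]$, which establishes positive definiteness. Consequently, $\Hess R([U_{*}])^{-1}$ and $\Hess R([U_{*}])^{-1/2}$ are well-defined symmetric operators and act on the eigenbasis by $\xi_{ij} \mapsto (\lambda_{j}-\lambda_{k+i})^{-1} \xi_{ij}$ and $\xi_{ij} \mapsto (\lambda_{j}-\lambda_{k+i})^{-1/2} \xi_{ij}$, respectively.

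Finally, given $\xi$ with $\lift_{U_{*}}(\xi) = U_{*}^{\perp} C$, the isometry from the first step gives the expansion $\xi = \sum_{i,j} C_{ij}\, \xi_{ij}$ because $C_{ij}$ is precisely the $(i,j)$-coordinate of $U_{*}^{\perp} C$ in the basis $(U_{*}^{\perp} E_{ij})$. Applying the two spectral formulas above and using linearity of $\lift_{U_{*}}$ yields
\begin{equation*}
\lift_{U_{*}}\bigl(\Hess R([U_{*}])^{-1}[\xi]\bigr) = \sum_{i,j} \frac{C_{ij}}{\lambda_{j}-\lambda_{k+i}}\, U_{*}^{\perp} E_{ij} = U_{*}^{\perp} C',
\end{equation*}
and likewise with $\sqrt{\lambda_{j}-\lambda_{k+i}}$ in place of $\lambda_{j}-\lambda_{k+i}$ for the inverse square root, giving $U_{*}^{\perp} C''$. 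There is no real obstacle here — the only thing one has to be careful about is the orthonormality of $(\xi_{ij})$, which is what justifies reading off the coordinates of $\xi$ directly from the entries of $C$.
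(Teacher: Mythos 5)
Your proposal is correct and follows essentially the same route as the paper: both arguments rest on the eigenbasis $(\xi_{ij})$ from Lemma \ref{lem:eigen}, positivity of the eigenvalues $\lambda_j - \lambda_{k+i}$ under the eigengap assumption, the expansion $\xi = \sum_{i,j} C_{ij}\,\xi_{ij}$ via the isometry (\ref{eq:lift}), and linearity of $\lift_{U_{*}}$. Your explicit verification that $(\xi_{ij})$ is orthonormal is a welcome bit of extra care that the paper leaves implicit, but it does not change the argument.
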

\begin{proof}
    The positive definiteness of $\Hess R([U_*])$ follows directly from the positiveness of its eigenvalues from Lemma \ref{lem:eigen} under the assumed condition $\lambda_{k} > \lambda_{k+1}$. This shows that $\Hess R([U_*])$ is invertible. Expanding $\xi$ into the basis of eigenvectors $(\xi_{ij})$ from Lemma \ref{lem:eigen} yields
    \begin{equation*}
        \xi = \sum_{i=1}^{d-k}\sum_{j=1}^{k} \inp{\xi}{\xi_{ij}}_{[U_{*}]} \cdot \xi_{ij}
        = \sum_{i=1}^{d-k}\sum_{j=1}^{k} \inp{U_{*}^{\perp} C}{U_{*}^{\perp}E_{ij}}_{F} \cdot \xi_{ij} = \sum_{i=1}^{d-k}\sum_{j=1}^{k} C_{ij} \cdot \xi_{ij}.
    \end{equation*}
    where the second equality holds by (\ref{eq:lift}). Therefore we have
    \begin{equation*}
        \Hess R([U_*])^{-1}[\xi] = \sum_{i=1}^{d-k}\sum_{j=1}^{k} C_{ij} \Hess R([U_*])^{-1}[\xi_{ij}] = \sum_{i=1}^{d-k}\sum_{j=1}^{k} \frac{C_{ij}}{\lambda_{j} - \lambda_{k+i}} \cdot \xi_{ij}.
    \end{equation*}
    Applying $\lift_{U_{*}}$ to both sides and using its linearity yields the first identity of the corollary. The second follows from a similar argument.
\end{proof}

\subsection{Convergence of empirical gradients and Hessians}
\begin{lemma}
\label{lem:var_emp_grad}
    For all $n \in \N$, it holds that
    \begin{equation*}
        \Exp\brack{\norm{\grad R_{n}([U_*])}_{H^{-1}}^2} = n^{-1} \cdot \Exp\brack{\norm{\grad \ell([U_*], X)}^{2}_{H^{-1}}},
    \end{equation*}
    where $H = \Hess R([U_{*}])$ and where $\norm{\xi}_{H^{-1}}^{2} = \inp{H^{-1}(\xi)}{\xi}_{[U_*]}$ for $\xi \in T_{[U_{*}]} \Gr(d, k)$.
\end{lemma}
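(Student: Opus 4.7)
The plan is to recognize this as a standard ``variance of the empirical mean'' identity, transported to the Riemannian setting on $T_{[U_*]}\Gr(d, k)$. The key ingredients will be (i) the linearity formula (\ref{eq:grad_emp_risk}) expressing $\grad R_n([U_*])$ as an empirical average of the iid tangent vectors $\grad \ell([U_*], X_i)$, (ii) the fact that $\Exp[\grad \ell([U_*], X)] = \grad R([U_*]) = 0$ because $[U_*]$ is a minimizer of $R$, and (iii) the existence and positive definiteness of $H^{-1/2}$, which is guaranteed under the eigengap condition by Corollary \ref{cor:inv}.

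The steps I would execute in order are as follows. First, I would rewrite the $H^{-1}$-norm as $\norm{\xi}_{H^{-1}}^2 = \inp{H^{-1/2}\xi}{H^{-1/2}\xi}_{[U_*]} = \norm{H^{-1/2}\xi}_{[U_*]}^2$, which uses that $H^{-1/2}$ is a well-defined self-adjoint operator on $T_{[U_*]}\Gr(d, k)$. Applying this identity to $\xi = \grad R_n([U_*])$ and then invoking the linearity of $H^{-1/2}$ together with (\ref{eq:grad_emp_risk}) yields
\begin{equation*}
    \norm{\grad R_n([U_*])}_{H^{-1}}^2 = \Big\|\frac{1}{n}\sum_{i=1}^{n} H^{-1/2}\grad \ell([U_*], X_i)\Big\|_{[U_*]}^2.
\end{equation*}

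Second, I would expand this squared norm into a double sum over $i, j \in [n]$ of inner products $\inp{H^{-1/2}\grad \ell([U_*], X_i)}{H^{-1/2}\grad \ell([U_*], X_j)}_{[U_*]}$ and take expectation. The diagonal terms ($i = j$) each contribute $\Exp\brack{\norm{H^{-1/2}\grad \ell([U_*], X)}_{[U_*]}^2}$. The off-diagonal terms vanish: by the iid assumption and bilinearity of the inner product, each off-diagonal term equals $\inp{H^{-1/2}\Exp[\grad \ell([U_*], X)]}{H^{-1/2}\Exp[\grad \ell([U_*], X)]}_{[U_*]}$, which is zero since $\Exp[\grad \ell([U_*], X)] = \grad R([U_*]) = 0$ by (\ref{eq:grad_risk}) and the first-order optimality of $[U_*]$. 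Collecting the $n$ diagonal contributions with the $1/n^2$ prefactor gives exactly the claimed identity.

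There is no real obstacle here; the argument is the textbook variance-of-the-sample-mean computation. The only things to handle carefully are the well-definedness of $H^{-1/2}$ (provided by Corollary \ref{cor:inv} under the eigengap assumption, which is implicit whenever we write $H^{-1}$) and the integrability required to exchange expectation with the finite sum, which holds as soon as $\Exp[\norm{\grad \ell([U_*], X)}_{H^{-1}}^2]$ is finite — otherwise both sides of the claimed equality are $+\infty$ and the statement is trivially true.
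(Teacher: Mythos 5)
Your proof is correct and takes essentially the same approach as the paper's: expand $\grad R_n([U_*])$ via (\ref{eq:grad_emp_risk}) into an iid average, open the squared $H^{-1}$-norm into a double sum, kill the off-diagonal terms using independence and $\grad R([U_*]) = 0$, and collect the $n$ identically-distributed diagonal terms. Your reformulation through $H^{-1/2}$ is a cosmetic variant; the paper works directly with the bilinear form $\inp{H^{-1}[\cdot]}{\cdot}_{[U_*]}$.
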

\begin{proof}
    By (\ref{eq:grad_emp_risk}) we have
    \begin{align*}
        \Exp\brack{\norm{\grad R_{n}([U_*])}_{H^{-1}}^2} &= \Exp \bigg[\bigg\lVert n^{-1} \sum_{i=1}^{n} \grad \ell([U_{*}], X_i)\bigg\rVert_{H^{-1}}^{2} \bigg] \\
        &= \frac{1}{n^2} \sum_{i=1}^{n} \sum_{j=1}^{n} \Exp\brack{\inp{H^{-1}[\grad \ell([U_{*}], X_i)]}{\grad \ell([U_{*}], X_j)}_{[U_{*}]}} \\
        &= \frac{1}{n^{2}} \sum_{i=1}^{n} \Exp\brack{\norm{\grad \ell([U_*], X_i)}_{H^{-1}}^{2}},
    \end{align*}
    and the statement follows since the elements of the sum are all equal to $\Exp\brack{\norm{\grad \ell([U_*], X)}_{H^{-1}}^{2}}$, since $(X_i)$ are \iid with the same distribution as $X$. The third equality holds since the cross-terms vanish by the independence of $(X_i)$, $\Exp\brack{\grad \ell([U_{*}], X)} = \grad R([U_*])$ by (\ref{eq:grad_risk}), and $\grad R([U_*]) = 0$ since $[U_{*}]$ is a minimizer of $R$.
\end{proof}

\begin{lemma}
\label{lem:var_grad}
    It holds that
    \begin{equation*}
        \Exp\brack{\norm{\grad \ell([U_*], X)}^{2}_{H^{-1}}} = \sum_{i=1}^{d-k}\sum_{j=1}^{k} \frac{\Exp\brack{\inp{u_{k+i}}{X}^{2} \inp{u_{j}}{X}^{2}}}{\lambda_j - \lambda_{k+i}},
    \end{equation*}
    where $H = \Hess R([U_{*}])$ and where $\norm{\xi}_{H^{-1}}^{2} = \inp{H^{-1}(\xi)}{\xi}_{[U_*]}$ for $\xi \in T_{[U_{*}]} \Gr(d, k)$.
\end{lemma}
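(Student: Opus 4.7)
The plan is to expand $\grad \ell([U_*], X)$ in the basis of eigenvectors of $H = \Hess R([U_*])$ furnished by Lemma \ref{lem:eigen}, and then invoke Corollary \ref{cor:inv} to read off the $H^{-1}$-norm explicitly in coordinates.

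First I would rewrite the gradient using $(\ref{eq:grad_loss})$ and the identity $I_d - U_*U_*^T = U_*^\perp U_*^{\perp, T}$ to get
\begin{equation*}
    \lift_{U_*}(\grad \ell([U_*], X)) = -U_*^\perp U_*^{\perp,T} X X^T U_* = U_*^\perp C,
\end{equation*}
where $C \in \R^{(d-k)\times k}$ has entries $C_{ij} = -\inp{u_{k+i}}{X}\inp{u_j}{X}$.

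Next, applying Corollary \ref{cor:inv} with this choice of $C$ yields
\begin{equation*}
    \lift_{U_*}(H^{-1}[\grad \ell([U_*], X)]) = U_*^\perp C', \quad C'_{ij} = \frac{C_{ij}}{\lambda_j - \lambda_{k+i}}.
\end{equation*}
Then, using (\ref{eq:lift}) and the fact that $U_*^{\perp, T} U_*^\perp = I_{d-k}$ so that $\inp{U_*^\perp A}{U_*^\perp B}_F = \inp{A}{B}_F$ for any compatible matrices $A, B$, we compute
\begin{equation*}
    \norm{\grad \ell([U_*], X)}_{H^{-1}}^2 = \inp{C'}{C}_F = \sum_{i=1}^{d-k}\sum_{j=1}^{k} \frac{C_{ij}^2}{\lambda_j - \lambda_{k+i}} = \sum_{i=1}^{d-k}\sum_{j=1}^{k} \frac{\inp{u_{k+i}}{X}^2 \inp{u_j}{X}^2}{\lambda_j - \lambda_{k+i}}.
\end{equation*}

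Taking expectations of both sides and applying Fubini (justified since each summand is non-negative) gives the claimed identity. There is no real obstacle here; the lemma is essentially a coordinate-wise unpacking of Corollary \ref{cor:inv} combined with the explicit form of the gradient, and the work is just to track that the lifted gradient lands in the image of $\Gamma \mapsto U_*^\perp \Gamma$ with the stated coordinates.
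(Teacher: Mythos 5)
Your proof is correct and matches the paper's argument essentially step for step: express the lifted gradient as $U_*^\perp C$ via (\ref{eq:grad_loss}) and the projector identity, apply Corollary \ref{cor:inv} to get $H^{-1}$ in coordinates, and evaluate the Frobenius pairing. The only cosmetic differences are that you absorb the sign into $C$ (immaterial since only $C_{ij}^2$ appears) and take the expectation at the end rather than throughout; also note that interchanging expectation and a finite sum needs only linearity, not Fubini.
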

\begin{proof}
    By (\ref{eq:grad_loss}), and using the identity $(I_d - U_{*}U_{*}^T) = U_{*}^{\perp}U_{*}^{\perp, T}$ we have
    \begin{equation*}
        \lift_{U_{*}}(\grad \ell([U_*], X)) = -U_{*}^{\perp} \brack{(U_{*}^{\perp, T} X)(U_{*}X)^{T}}.
    \end{equation*}
    Let $C = (U_{*}^{\perp, T} X)(U_{*}X)^{T}$. It has entries $C_{i,j} = \inp{u_{k+i}}{X} \cdot \inp{u_j}{X}$. Hence by Corollary \ref{cor:inv}
    \begin{equation*}
        \lift_{U_{*}}(H^{-1}[\grad \ell([U_*], X)]) = -U_{*}^{\perp}C', 
    \end{equation*}
    where $C' = C_{ij}/(\lambda_j - \lambda_{k+i})$. Now
    \begin{align*}
        \Exp\brack{\norm{\grad \ell([U_*], X)}^{2}_{H^{-1}}} &= \Exp\brack{\inp{\lift_{U_{*}}(H^{-1}[\grad \ell([U_*], X)])}{\lift_{U_{*}}(\grad \ell([U_*], X))}_{F}} \\
        &= \Exp\brack{\inp{U_{*}^{\perp} C'}{U_{*}^{\perp} C}_{F}}
        = \Exp\brack{\inp{C'}{C}_{F}}
        = \sum_{i=1}^{d-k}\sum_{j=1}^{k} \frac{\Exp\brack{C_{ij}^{2}}}{\lambda_{j} - \lambda_{k+i}}
    \end{align*}
    Replacing $C_{ij}$ by its value yields the result.
\end{proof}

\begin{lemma}
\label{lem:clt}
    Assume that for all $i, s \in [d-k]$ and $j, t \in [k]$
    \begin{equation*}
        \Lambda_{ijst} = \Exp\brack{\inp{u_{k+i}}{X}\inp{u_j}{X}\inp{u_{k+s}}{X}\inp{u_{t}}{X}} < \infty.
    \end{equation*}
    Then
    \begin{equation*}
        \lift_{U_{*}}(H_{n}^{-1}[\sqrt{n} \cdot \grad R_{n}([U_{*}])] \xrightarrow{d} U_{*}^{\perp} G,
    \end{equation*}
    where $H_{n} = \Hess R_n([U_{*}])$, and where $G$ is a $(d -k) \times k$ matrix with jointly Gaussian mean zero entries with covariances $\Exp\brack{G_{ij}G_{st}} = \Lambda_{ijst}/(\delta_{ij}\delta_{st})$ where $\delta_{ij} = \lambda_j - \lambda_{k+i}$.
\end{lemma}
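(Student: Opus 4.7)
The plan is to combine the classical multivariate central limit theorem with a Slutsky-type argument, exploiting the fact that the tangent space $T_{[U_*]} \Gr(d,k)$ is a finite-dimensional vector space that we can identify with $\R^{(d-k)\times k}$ via the map $\Gamma \mapsto U_*^\perp \Gamma$ composed with $\lift_{U_*}$.

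First, I would analyze $\sqrt{n} \cdot \grad R_n([U_*])$ on its own. By (\ref{eq:grad_emp_risk}) this is a normalized sum of \iid tangent vectors $\grad \ell([U_*], X_i)$ with mean $\grad R([U_*]) = 0$ (since $[U_*]$ minimizes $R$). Using the explicit formula computed in the proof of Lemma \ref{lem:var_grad}, namely $\lift_{U_*}(\grad \ell([U_*], X)) = - U_*^\perp M$ with $M_{ij} = \inp{u_{k+i}}{X}\inp{u_j}{X}$, the covariance of the entries of $M$ is exactly the array $\Lambda_{ijst}$, which is finite by hypothesis. The classical CLT in $\R^{(d-k)\times k}$ therefore yields
\begin{equation*}
    \lift_{U_*}(\sqrt{n} \cdot \grad R_n([U_*])) \xrightarrow{d} -U_*^\perp N,
\end{equation*}
where $N$ is a mean zero Gaussian matrix with $\Exp[N_{ij}N_{st}] = \Lambda_{ijst}$.

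Next I would handle the Hessian prefactor. By (\ref{eq:hess_emp_risk}), $H_n = \Hess R_n([U_*])$ is an \iid average of the linear maps $\Hess \ell([U_*], X_i)$, which have finite expectation equal to $H = \Hess R([U_*])$ (only second moments of $X$ are needed for this, and they are controlled by $\Lambda$). The strong law of large numbers on the finite-dimensional space of linear maps $T_{[U_*]}\Gr(d,k) \to T_{[U_*]}\Gr(d,k)$ gives $H_n \to H$ almost surely. Since $H$ is invertible under the eigengap assumption (Corollary \ref{cor:inv}), continuity of matrix inversion implies $H_n^{-1} \to H^{-1}$ in probability. A standard Slutsky argument (the map $(A, \zeta) \mapsto A[\zeta]$ is continuous) then combines with the CLT above to give
\begin{equation*}
    \lift_{U_*}\paren*{H_n^{-1}\brack*{\sqrt{n} \cdot \grad R_n([U_*])}} \xrightarrow{d} \lift_{U_*}(H^{-1}[-U_*^\perp N \text{ interpreted as a tangent vector}]).
\end{equation*}

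Finally, I would identify the limit explicitly using Corollary \ref{cor:inv}: if $\lift_{U_*}(\zeta) = U_*^\perp C$, then $\lift_{U_*}(H^{-1}[\zeta]) = U_*^\perp C'$ with $C'_{ij} = C_{ij}/\delta_{ij}$. Applying this entry-wise to the Gaussian matrix $-N$ defines $G_{ij} \defeq -N_{ij}/\delta_{ij}$, which is mean zero Gaussian with $\Exp[G_{ij}G_{st}] = \Lambda_{ijst}/(\delta_{ij}\delta_{st})$, matching the statement; the sign can be absorbed by symmetry of the Gaussian. The main obstacle here is mostly bookkeeping: ensuring the abstract Slutsky/CLT arguments are stated on the correct finite-dimensional vector space and that the identification $H_{U_*} \cong \R^{(d-k)\times k}$ commutes cleanly with $H^{-1}$; once this is in place, each individual step is standard.
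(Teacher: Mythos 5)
Your proposal is correct and follows essentially the same approach as the paper: a CLT for the per-sample gradients combined with $H_n^{-1}\xrightarrow{p}H^{-1}$ and Slutsky, with the covariance identified via Corollary~\ref{cor:inv}; the only cosmetic difference is that the paper applies $H^{-1}$ to the per-sample gradients \emph{before} invoking the CLT (defining $Z=U_*^{\perp,T}\lift_{U_*}(H^{-1}[\grad\ell([U_*],X)])$ so that $Z$ already has the target covariance), whereas you push $H^{-1}$ through the Gaussian limit afterwards, which is equivalent since $H^{-1}$ is a fixed linear map. One small caution: your parenthetical that the second moments of $X$ ``are controlled by $\Lambda$'' is not quite right (finiteness of the mixed fourth moments in $\Lambda$ does not by itself bound $\Exp[\|X\|_2^2]$), but this is immaterial because the appendix assumes $\Exp[\|X\|_2^2]<\infty$ globally, which is exactly what is needed for $H_n\to H$.
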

\begin{proof}
    Recall the global assumption $\Exp{\norm{X}_2^2} < \infty$ so that the population covariance matrix $\Sigma$ exists. By (\ref{eq:hess_risk}) and (\ref{eq:hess_loss}) this implies that $H = \Hess R([U_*])$ exists. Thus by (\ref{eq:hess_emp_risk}) and the weak law of large numbers, we have $H_{n} \xrightarrow{p} H$, and by the continuous mapping theorem $H_{n}^{-1} \xrightarrow{p} H^{-1}$. On the other hand consider the random matrix
    \begin{equation*}
        Z \defeq U_{*}^{\perp, T} \lift_{U_{*}}(H^{-1}[\grad \ell([U_{*}], X)]).
    \end{equation*}
    By the proof of Lemma (\ref{lem:var_grad}) we have $Z = -C'$ where $C'_{ij} = (\inp{u_{k+i}}{X} \cdot \inp{u_{j}}{X})/(\lambda_j - \lambda_{k+i})$, thus $\Exp\brack{Z_{ij}} = 0$ and $\Exp\brack{Z_{ij}Z_{st}} = \Lambda_{ijst}/(\delta_{ij} \delta_{st})$. Hence by the central limit theorem and (\ref{eq:grad_emp_risk}), as $n \to \infty$,
    \begin{equation*}
        U_{*}^{\perp, T} \lift_{U_{*}}(H^{-1}[\sqrt{n} \cdot \grad R_{n}([U_{*}])]) = \frac{1}{\sqrt{n}} \cdot \sum_{i=1}^{n}{Z_i} \xrightarrow{d} G,
    \end{equation*}
    where $G$ is the Gaussian random matrix in the statement. Finally, by another application of the central limit theorem, we have that $\sqrt{n} \cdot \grad R_{n}([U_{*}])$ converges in distribution to a random Gaussian element, and hence $(H_{n}^{-1} - H^{-1})[\sqrt{n} \cdot \grad R_{n}([U_{*}])]$ converges to $0$ in probability by Slutsky's theorem. Therefore
    \begin{multline*}
        U_{*}^{\perp, T} \lift_{U_{*}}(H_{n}^{-1}[\sqrt{n} \cdot \grad R_{n}([U_{*}])] \\ = \underbrace{U_{*}^{\perp, T} \lift_{U_{*}}((H_{n}^{-1} - H^{-1})[\sqrt{n} \cdot \grad R_{n}([U_{*}])]}_{\xrightarrow{p} 0}
        + \underbrace{U_{*}^{\perp, T} \lift_{U_{*}}(H^{-1}[\sqrt{n} \cdot \grad R_{n}([U_{*}])]}_{\xrightarrow{d} G},
    \end{multline*}
    and the final statement of the lemma is obtained by another application of Slutsky's theorem, an an application of the continuous mapping theorem with the map $C \mapsto U_{*}^{\perp} C$, recalling that $U_{*}^{\perp}U_{*}^{\perp, T} \Delta = \Delta$ for all $\Delta \in H_{U_{*}}$.
\end{proof}

\begin{lemma}
\label{lem:emp_hess_conv}
    Assume that $\lambda_k > \lambda_{k+1}$ and that $\Exp\brack{X_j^{4}} < \infty$ for all $j \in [d]$. If
    \begin{equation*}
        n \geq 4(8\mathcal{V} + 1)\log(3k(d-k)) + 8(2\nu + 1)\log(4/\delta),
    \end{equation*}
    then with probability at least $1-\delta/4$, 
    \begin{equation*}
        \lambda_{\min}(\widetilde{H}_{n}) \geq \frac{1}{2},
    \end{equation*}
    where $\widetilde{H}_{n} = H^{-1/2} \circ H_n \circ H^{-1/2}$ and 
    \begin{equation*} 
        \mathcal{V} = \sup_{\norm{\xi} = 1} \Exp\brack{\norm{M[\xi]}_{[U_*]}^2} - 1, \quad \quad
        \nu = \sup_{\norm{\xi} = 1} \Exp\brack{\norm{M^{1/2}[\xi]}_{[U_{*}]}^{4}} - 1,
    \end{equation*}
    where $M = H^{-1/2} \circ \Hess \ell([U_{*}], X) \circ H^{-1/2}$, $H_{n} = \Hess R_n([U_{*}])$, and $H = \Hess R([U_{*}])$.
\end{lemma}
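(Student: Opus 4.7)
The aim is to show $\lambdamin(\widetilde{H}_n) \geq 1/2$ with probability at least $1 - \delta/4$, which is equivalent to $\opnorm{\widetilde{H}_n - I} \leq 1/2$ where $I$ is the identity operator on $T_{[U_{*}]}\Gr(d, k)$. Writing $\widetilde{H}_n - I = n^{-1}\sum_{i=1}^{n}(M_i - I)$ as an empirical mean of iid mean-zero symmetric operators on this $D$-dimensional space with $D = k(d-k)$, the task reduces to a matrix concentration estimate. I would first identify $\mathcal{V} = \opnorm{\Exp\brack{(M - I)^{2}}}$ as the matrix variance of the centered summands (this equality follows directly from the definition of $\mathcal{V}$ and the fact that $\Exp\brack{M} = I$, with $\opnorm{\cdot}$ on a self-adjoint operator realized as the supremum of $\inp{\cdot \, \xi}{\xi}_{[U_*]}$ over unit $\xi$), and $\nu$ as the worst-case variance of the scalar quadratic form $\xi \mapsto \inp{M[\xi]}{\xi}_{[U_{*}]}$ over unit $\xi$; the bound $\nu \leq \mathcal{V}$ follows from Cauchy-Schwarz, matching the observation made in Remark \ref{rem:var_params}.

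The core step is to apply a matrix Bernstein-type inequality suited to the fact that $M_i$ is not uniformly bounded under the assumption $\Exp\brack{X_{j}^{4}} < \infty$. Since classical matrix Bernstein requires an almost sure bound on $\opnorm{M}$ but the given moment condition only yields a second moment, I would proceed via truncation: define $M_i' \defeq M_i \cdot \mathbb{I}[\opnorm{M_i} \leq K]$ for a threshold $K$ calibrated to the problem parameters, apply classical matrix Bernstein to the bounded sum $n^{-1}\sum_i(M_i' - \Exp\brack{M_i'})$ with variance parameter $\mathcal{V}$ and boundedness $K$, and control both the centering bias and the truncation tail via Markov's inequality combined with the second moment of $\opnorm{M}$. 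Choosing $K$ of order $\sqrt{\nu}$ ensures that the truncation contribution scales with $\sqrt{\nu \log(1/\delta)/n}$ rather than $\sqrt{\mathcal{V} \log(1/\delta)/n}$, producing the desired asymmetry between the two variance parameters. This should yield a tail bound roughly of the form
\begin{equation*}
    \Prob\paren{\opnorm{\widetilde{H}_n - I} \geq 1/2} \leq 2D \exp(-cn/\mathcal{V}) + 2 \exp(-cn/\nu),
\end{equation*}
from which the stated sample-size condition follows by requiring each term on the right to be $\leq \delta/8$ and solving for $n$.

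The main technical obstacle is obtaining the precise decomposition in which $\mathcal{V}$ appears only next to $\log(3k(d-k))$ and $\nu$ only next to $\log(4/\delta)$. A naive application of matrix Bernstein with the worst-case operator variance $\mathcal{V}$ would give $\mathcal{V} \log(D/\delta)$ in both places; the refinement to $\nu$ in the deviation term requires carefully separating the "dimension-dependent" concentration (controlled through $\mathcal{V}$ and an implicit $\varepsilon$-net or Ahlswede-Winter-style trace argument in dimension $D$) from the "per-direction" scalar deviation, where the weaker parameter $\nu$ suffices. Calibrating the truncation threshold to $\sqrt{\nu}$ rather than to a quantity involving $\mathcal{V}$ is precisely what achieves this separation, and verifying that the truncation tail and the centering correction remain controlled under the $\Exp\brack{X_j^4} < \infty$ assumption is the most delicate computation.
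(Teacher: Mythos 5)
Your identification of $\mathcal{V}$ as the matrix variance $\opnorm{\Exp[(M - \mathrm{Id})^2]}$ and of $\nu$ as the worst-case scalar variance of $\xi \mapsto \inp{M[\xi]}{\xi}_{[U_*]}$ (the ``wimpy variance'' in Talagrand's terminology) is correct, as is the observation $\nu \leq \mathcal{V}$ by Cauchy--Schwarz and the reduction to bounding $\lambda_{\max}(\mathrm{Id} - \widetilde{H}_n)$. However, the truncation-plus-Matrix-Bernstein route you propose cannot deliver the asymmetric dependence $\mathcal{V}\log D + \nu\log(1/\delta)$ that the lemma asserts, and this is precisely the nontrivial part. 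No matter what truncation threshold $K$ you pick, a single application of Matrix Bernstein to the truncated sum produces a tail of the shape $D\exp\paren{-cnt^2/(\mathcal{V} + Kt)}$, and solving this for $t = 1/2$ at failure probability $\delta$ forces $n \gtrsim (\mathcal{V} + K)\log(D/\delta)$, which contains an unavoidable $\mathcal{V}\log(1/\delta)$ cross-term. Calibrating $K \sim \sqrt{\nu}$ changes the Bernstein linear term, not the Gaussian-regime coefficient, so the cross-term persists. In addition, controlling the truncation tail via Markov with a second moment gives a polynomial, not exponential, decay in $n$, so the term $2\exp(-cn/\nu)$ in your sketched tail bound does not come out of that argument either.

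The paper instead decomposes the problem into ``expectation'' and ``deviation from expectation'' before any concentration inequality is applied. Writing $1 - \lambda_{\min}(\widetilde{H}_n) = \sup_{\norm{\xi}=1} n^{-1}\sum_i(1 - \inp{M_i[\xi]}{\xi}_{[U_*]})$ exposes the quantity as the supremum of an empirical process indexed by the unit sphere of $T_{[U_*]}\Gr(d,k)$. Bousquet's inequality (a Talagrand-type concentration inequality for such suprema) then bounds the deviation of this supremum from its mean using the wimpy variance $\nu$ alone, contributing $\sqrt{\nu\log(1/\delta)/n}$. The expected supremum $\Exp\brack{\lambda_{\max}(\mathrm{Id}-\widetilde{H}_n)}$ is bounded separately by the expectation form of Matrix Bernstein (\citet[Theorem 6.6.1]{tropp2015introduction}), giving $\sqrt{\mathcal{V}\log D/n}$ with $D = k(d-k)$. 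It is this two-stage structure — Talagrand for the deviation, Matrix Bernstein for the mean — that decouples $\mathcal{V}$ from $\log(1/\delta)$. Your proposal treats the whole quantity with one Bernstein-style inequality, which structurally cannot achieve the decoupling.
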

\begin{proof}
    Let $M_i = H^{-1/2} \circ \Hess \ell([U_*], X_i) \circ H^{-1/2}$. We have the variational characterization
    \begin{equation*}
        1 - \lambda_{\min}(\widetilde{H}_{n}) = \lambda_{\max}(\mathrm{Id} - \widetilde{H}_{n}) = \sup_{\norm{\xi} = 1} \frac{1}{n} \sum_{i=1}^{n} (\Exp\brack{\inp{M[\xi]}{\xi}_{[U_{*}]}} - \inp{M_i[\xi]}{\xi}_{[U_{*}]})
    \end{equation*}
    Thus by Bousquet's inequality \cite{bousquet2002bennett}, specifically the version in \cite[][Corollary 16.1]{van2016estimation}, with probability at least $1-\delta/4$
    \begin{equation*}
        \lambda_{\max}(\mathrm{Id} - \widetilde{H}_{n}) \leq 2\Exp\brack{\lambda_{\max}(\mathrm{Id} - \widetilde{H}_{n})} + \sqrt{\frac{2 \nu \log(4/\delta)}{n}} + \frac{4\log(4/\delta)}{3n}.
    \end{equation*}
    Now by the Matrix Bernstein inequality \cite[][Theorem 6.6.1]{tropp2015introduction}, we have
    \begin{equation*}
        \Exp\brack{\lambda_{\max}(\mathrm{Id} - \widetilde{H}_{n})} = \Exp\brack{\lambda_{\max}\big(\frac{1}{n} \sum_{i=1}^{n} \mathrm{Id} - M_i\big)} \leq \sqrt{\frac{2\mathcal{V}\log(3k(d-k))}{n}} + \frac{\log(3k(d-k))}{3n}.
    \end{equation*}
    Combining the two bounds and solving for $n$ yields the result.
\end{proof}

\begin{lemma}
\label{lem:explicit}
    The parameters $\mathcal{V}$ and $\nu$ defined in Lemma \ref{lem:emp_hess_conv} admit the explicit expression given in Remark \ref{rem:var_params}.
\end{lemma}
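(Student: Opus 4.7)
The plan is to carry out a direct computation in the eigenbasis of $H = \Hess R([U_{*}])$. By Lemma \ref{lem:eigen}, the family $(\xi_{ij})_{i \in [d-k], j \in [k]}$ with $\lift_{U_{*}}(\xi_{ij}) = U_{*}^{\perp} E_{ij}$ is an orthonormal basis of $T_{[U_{*}]}\Gr(d,k)$ diagonalizing $H$ with eigenvalues $\delta_{ij} = \lambda_j - \lambda_{k+i}$. Any unit tangent vector can therefore be parametrized as $\xi = \sum_{i,j} m_{ij}\, \xi_{ij}$ with $\norm{M}_{F} = 1$, where $M = (m_{ij}) \in \R^{(d-k)\times k}$. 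The suprema in the definitions of $\mathcal{V}$ and $\nu$ from Lemma \ref{lem:emp_hess_conv} thus agree with the suprema over matrices $M$ with $\norm{M}_{F} = 1$ appearing in Remark \ref{rem:var_params}.

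Next, I would write out $\mathcal{M}\xi$ for the random operator $\mathcal{M} \defeq H^{-1/2} \circ \Hess \ell([U_{*}], X) \circ H^{-1/2}$ in these coordinates. By Corollary \ref{cor:inv}, $\lift_{U_{*}}(H^{-1/2}\xi) = U_{*}^{\perp} N$ where $N_{ij} = m_{ij}/\sqrt{\delta_{ij}}$. Substituting into (\ref{eq:hess_loss}) and decomposing $X = U_{*}\widetilde{X}_{\leq k} + U_{*}^{\perp}\widetilde{X}_{>k}$ with $\widetilde{X}_{j} = \inp{u_{j}}{X}$ yields
\begin{equation*}
    \lift_{U_{*}}(\Hess\ell([U_{*}], X)[H^{-1/2}\xi]) = U_{*}^{\perp}\brack*{N \widetilde{X}_{\leq k}\widetilde{X}_{\leq k}^{T} - \widetilde{X}_{>k}\widetilde{X}_{>k}^{T} N},
\end{equation*}
after using $U_{*}^{T}U_{*}^{\perp} = 0$. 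A second application of $H^{-1/2}$ divides the $(i,j)$-th entry by $\sqrt{\delta_{ij}}$, so that $\lift_{U_{*}}(\mathcal{M}\xi) = U_{*}^{\perp} P$ with
\begin{equation*}
    P_{ij} = \frac{\widetilde{X}_{j}}{\sqrt{\delta_{ij}}}\sum_{s=1}^{k}\frac{m_{is}\widetilde{X}_{s}}{\sqrt{\delta_{is}}} \;-\; \frac{\widetilde{X}_{k+i}}{\sqrt{\delta_{ij}}}\sum_{t=1}^{d-k}\frac{m_{tj}\widetilde{X}_{k+t}}{\sqrt{\delta_{tj}}}.
\end{equation*}
Pairing $\lift_{U_{*}}(\mathcal{M}\xi)$ against $\lift_{U_{*}}(\xi) = U_{*}^{\perp} M$ in the Frobenius inner product further gives the useful scalar identity $\inp{\mathcal{M}\xi}{\xi}_{[U_{*}]} = \norm{N\widetilde{X}_{\leq k}}_{2}^{2} - \norm{N^{T}\widetilde{X}_{>k}}_{2}^{2}$.

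For $\mathcal{V}$, I would expand $\Exp\brack{\norm{\mathcal{M}\xi}_{[U_{*}]}^{2}} = \sum_{i,j}\Exp\brack{P_{ij}^{2}}$, distribute the expectation across the three families of fourth-order moments of $\widetilde{X}$ (the $\Gamma$ terms from the square of the first summand, the $\Omega$ terms from the square of the second, and the $\Lambda$ terms from the cross product), and swap summation orders. The coefficients of $\Gamma_{jjrp}$, $\Lambda_{ijts}$, and $\Omega_{iiql}$ that emerge then exactly match $a_{jrp}(M)$, $b_{ijts}(M)$, and $c_{iql}(M)$ in (\ref{eq:var}). For $\nu$, I would use $\norm{\mathcal{M}^{1/2}\xi}^{4} = \inp{\mathcal{M}\xi}{\xi}_{[U_{*}]}^{2}$, square the two-term expression above, take expectation, and re-index the resulting sixfold sums into the quadruple sums of $\Gamma, \Lambda, \Omega$ terms with coefficients $\alpha_{jsrp}$, $\beta_{tsqr}$, $\kappa_{itql}$ in (\ref{eq:wimpy_var}). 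The computation is elementary throughout; the main obstacle is the bookkeeping of the many summation indices and the careful naming of free versus bound labels so that the grouped sums align with the coefficient formulas in Remark \ref{rem:var_params}. Finally, the inequality $\mathcal{V} \geq \nu$ follows from pointwise Cauchy-Schwarz: $\inp{\mathcal{M}\xi}{\xi}_{[U_{*}]}^{2} \leq \norm{\mathcal{M}\xi}_{[U_{*}]}^{2}\norm{\xi}_{[U_{*}]}^{2}$, which for unit $\xi$ gives $\Exp\brack{\norm{\mathcal{M}^{1/2}\xi}^{4}} \leq \Exp\brack{\norm{\mathcal{M}\xi}^{2}}$ and passes to the supremum.
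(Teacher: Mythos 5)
Your argument is correct and follows essentially the same route as the paper's: both expand $\xi$ in the coordinate system $\lift_{U_{*}}(\xi) = U_{*}^{\perp}M$, apply Corollary \ref{cor:inv} to compute $H^{-1/2}$ entrywise, substitute into the Hessian formula (\ref{eq:hess_loss}), and reduce $\inp{\mathcal{M}\xi}{\xi}$ and $\norm{\mathcal{M}\xi}^{2}$ to fourth-moment combinatorics in the eigencoordinates $\widetilde{X}_{j}$. (The paper writes $C$ for your $M$ and $C'$ for your $N$; you also throw in a Cauchy--Schwarz proof of $\mathcal{V}\geq\nu$, which the paper asserts in Remark \ref{rem:var_params} but is not part of Lemma \ref{lem:explicit}.)
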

\begin{proof}
    Let $\xi \in T_{[U_{*}]} \Gr(d, k)$, and let $U_{*}^{T} C = \Delta = \lift_{U_*}(\xi)$. We have by Corollary \ref{cor:inv}, for $\xi_1 = H^{-1/2}[\xi]$
    \begin{equation*}
        \lift_{U_{*}}(\xi_1) = U_{*}^{\perp} C',
    \end{equation*}
    where $C'_{ij} = C_{ij}/\sqrt{\lambda_j - \lambda_{k+i}}$. Now by (\ref{eq:hess_loss}), we have for $\xi_2 = \Hess \ell([U_{*}], X)[\xi_1]$
    \begin{equation*}
        \lift_{U_{*}}(\xi_2) = U_{*}^{\perp} \brack{C' U_{*}^{T}XX^TU_{*} - U_{*}^{\perp, T}XX^T U_{*}^{\perp} C'}
    \end{equation*}
    Defining $\widetilde{X}_j = \inp{X}{u_j}$, and writing $\widetilde{X}_{\leq k}$ for its first $k$ entries, and $\widetilde{X}_{>k}$ for its remaining $d - k$ entries, we have
    \begin{equation*}
        \lift_{U_{*}}(\xi_2) = U_{*}^{\perp} \brack{C' \widetilde{X}_{\leq k} \widetilde{X}_{\leq k}^{T} - \Xtilde_{>k}\Xtilde_{>k}^{T}C'}.
    \end{equation*}
    Denote the term in brackets by $D$. Then again by Corollary \ref{cor:inv}, for $\xi_3 = H^{-1/2}[\xi_2]$, we get
    \begin{equation*}
        \lift_{U_{*}}(\xi_3) = U_{*}^{\perp} D',
    \end{equation*}
    where $D'_{ij} = D_{ij}/\sqrt{\lambda_{j} - \lambda_{k+i}}$. We start with the computation of $\inp{M[\xi]}{\xi}_{[U_{*}]}$. We have
    \begin{align*}
        \inp{M[\xi]}{\xi}_{[U_{*}]} &= \inp{H^{-1/2} \circ \Hess \ell([U_*], X) \circ H^{-1/2}[\xi]}{\xi}_{[U_{*}]} \\
        &= \inp{\Hess \ell([U_*], X) \circ H^{-1/2}[\xi]}{H^{-1/2}[\xi]}_{[U_{*}]} \\
        &= \inp{C' \Xtilde_{\leq k} \widetilde{X}_{\leq k}^{T} - \Xtilde_{>k}\Xtilde_{>k}^{T}C'}{C'}_{F} \\
        &= \Tr(\Xtilde_{\leq k} \Xtilde_{\leq k}^{T} C'^{T} C') - \Tr(C'^{T}\Xtilde_{>k}\Xtilde_{>k}^{T}C') \\
        &= \norm{C' \Xtilde_{\leq k}}_2^2 - \norm{C'^{T}\Xtilde_{>k}}_2^2 \\
        &= \sum_{i=1}^{d-k} \big(\sum_{j=1}^{k}\frac{C_{ij}}{\lambda_{j} - \lambda_{k+i}} \cdot \Xtilde_{j}\big)^2 - \sum_{j=1}^{k} \big(\sum_{i=1}^{d-k} \frac{C_{ij}}{\lambda_j - \lambda_{k+i}} \Xtilde_{k+i}\big)^2
    \end{align*}
    Taking the square of this expression, expanding, and taking expectations yields an explicit expression of $\inp{M[\xi]}{\xi}_{[U_{*}]}^2$ in terms of $C$. Noting that the map that sends $\xi$ to $C$ is an isometric isomorphism, the supremum of the former over vectors $\norm{\xi} = 1$ is equal to the supremum of the latter over $\norm{C}_{F} = 1$. This concludes the proof for $\nu$. For $\mathcal{V}$, note that
    \begin{align*}
        \norm{M[\xi]}_[U_*]^2 &= \norm{U_{*}^{\perp} D}_{F}^{2} = \norm{D}_{F}^{2} \\
        &= \sum_{i=1}^{d-k} \sum_{j=1}^{d-k} \frac{1}{\lambda_{j} - \lambda_{k+i}} \cdot \bigg(\Xtilde_j \sum_{s=1}^{k} \frac{C_{is}}{\sqrt{\lambda_{s} - \lambda_{k+i}}} \Xtilde_s - \Xtilde_{k+i} \sum_{t=1}^{d-k} \frac{C_{tj}}{\sqrt{\lambda_j - \lambda_{k+t}}} \Xtilde_{k+t}\bigg)^2
    \end{align*}
    Expanding the square and taking expectations gives an explicit expression of $\norm{M[\xi]}_2^2$ in terms of $C$, and by the same argument as for $\nu$, $\mathcal{V}$ is the supremum over $\norm{C}_{F} = 1$ of this explicit expression.
\end{proof}


\subsection{Localization argument for ERM}

The following two corollaries are immediate consequences of Corollary \ref{cor:taylor} and Proposition \ref{prop:lipschitz}, since the population and empirical reconstruction risks $R$ and $R_n$ are, up to an insignificant constant, equal to the negative block Rayleigh quotients $-(1/2)\Tr(U^T\Sigma U)$ and $-(1/2) \Tr(U^T \Sigma_n U)$ respectively.

\begin{corollary}
\label{cor:taylor_risk}
    Assume that $\lambda_{k} > \lambda_{k+1}$, and let $[U] \in \Gr(d, k)$ such that $\theta_{k}([U_*], [U]) < \pi/4$. Then
    \begin{gather*}
        R_{n}([U]) - R_{n}([U_{*}]) \geq \inp{\grad R_{n}([U_{*}])}{\xi}_{[U_{*}]} + \frac{2}{5} \inp{\Hess R_n([U_{*}])[\xi]}{\xi}_{[U_{*}]} \\
        R([U]) - R([U_{*}]) \leq \frac{3}{4} \inp{\Hess R([U_{*}])[\xi]}{\xi}_{[U_{*}]}
    \end{gather*}
    where $\xi = \Log_{[U_{*}]}([U])$.
\end{corollary}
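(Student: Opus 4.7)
The plan is to exhibit both bounds as immediate consequences of Corollary \ref{cor:taylor} applied to the negative block Rayleigh quotients $F_{\Sigma}([U]) = -\frac{1}{2}\Tr(U^T\Sigma U)$ and $F_{\Sigma_n}([U]) = -\frac{1}{2}\Tr(U^T\Sigma_n U)$, noting that $R = F_{\Sigma} + \frac{1}{2}\Exp\brack{\norm{X}_2^2}$ and $R_n = F_{\Sigma_n} + \frac{1}{2n}\sum_{i=1}^n \norm{X_i}_2^2$. Adding a constant in $U$ does not affect gradients, Hessians, or value differences, so the two inequalities of Corollary \ref{cor:taylor} transfer verbatim.

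For the upper bound on $R$, under $\lambda_k > \lambda_{k+1}$ the subspace $[U_*]$ is the unique global minimizer of $R$, hence $\grad R([U_*]) = 0$. The upper estimate of Corollary \ref{cor:taylor}, together with the explicit bound $\psi(\theta) \leq 3/2$ for $\theta \in [0, \pi/4)$, directly yields $R([U]) - R([U_*]) \leq \frac{3}{4}\inp{\Hess R([U_*])[\xi]}{\xi}_{[U_*]}$.

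For the lower bound on $R_n$, apply the lower estimate of Corollary \ref{cor:taylor} with $F = R_n$, $[V_*] = [U_*]$, and $[V] = [U]$. Retaining the now-nonvanishing gradient term $\inp{\grad R_n([U_*])}{\xi}_{[U_*]}$ and using the coarse bound $\sin^2\theta/\theta^2 \geq 4/5$ on $[0, \pi/4)$ (weaker than the sharp value $8/\pi^2$) yields the coefficient $\frac{4}{5} \cdot \frac{1}{2} = \frac{2}{5}$.

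The main obstacle is that Corollary \ref{cor:taylor}---and hence Proposition \ref{prop:self-concordance}---was stated with $[V_*]$ equal to the global minimizer of $F$, whereas for $F = R_n$ the minimizer is $[U_n]$ rather than $[U_*]$. A close reading of the proof of Proposition \ref{prop:self-concordance} shows that the minimizer assumption enters only to guarantee the block identities $V_*^T A V_* = D_{\leq k}$ and $V_*^T A P_r = 0$; without the latter, an extra term $2\Tr(S^2\sin(2tS) U_*^T \Sigma_n P_r)$ appears in $g''(t)$. Controlling it pointwise---where Proposition \ref{prop:lipschitz}'s bound $|\nabla^3 F| \leq 4\norm{A}_F$ is the essential tool---and reintegrating recovers the self-concordance inequality at the non-minimizer center $[U_*]$.
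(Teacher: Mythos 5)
Your derivation of the upper bound for $R$ and your observation that adding a constant leaves gradients, Hessians, and value differences unchanged are both correct, and your identification of the subtlety in the lower bound — that $[U_*]$ is not the minimizer of $R_n$ — is sharp. However, your proposed repair in the final paragraph does not work. The minimizer assumption in Proposition \ref{prop:self-concordance} does not enter merely through the block identities; it is essential for the positive-semidefiniteness of $Q^T D_{\leq k} Q - \Gamma^T D_{>k}\Gamma$, which requires $V_*$ to span the \emph{top-$k$} eigenspace so that $y^T Q^T D_{\leq k} Q y \geq \mu_k$ and $y^T \Gamma^T D_{>k} \Gamma y \leq \mu_{k+1}$. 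For $F = R_n$ centered at $[U_*]$, neither does $U_*$ diagonalize $\Sigma_n$ nor is it the leading eigenspace, so $g''(t)$ loses the PSD structure that makes $\|\cdot\|_1 = \Tr(\cdot)$, and the self-concordance inequality $|g'''(t)| \le 2\theta\tan(2t\theta)\,g''(t)$ is simply not true in that generality. The Lipschitz bound $|\nabla^3 F| \le 4\|A\|_F$ is additive rather than relative and cannot be "reintegrated" to recover a multiplicative self-concordance estimate; in particular nothing guarantees $g''(t) > 0$ at a non-minimizer center.

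The intended route is much simpler and uses a clause you overlooked: Corollary \ref{cor:taylor} explicitly says the estimates remain true when $[V]$ and $[V_*]$ are interchanged, i.e.\ when the geodesic is anchored at the \emph{non-minimizer} endpoint with the minimizer at the far end. Applying the interchanged lower bound with $F$ the negative block Rayleigh quotient of $\Sigma_n$, minimizer $[V_*] = [U_n]$, and arbitrary point $[V] = [U_*]$, and writing $\xi = \Log_{[U_*]}([U_n])$, gives
\[
R_n([U_n]) - R_n([U_*]) \geq \inp{\grad R_n([U_*])}{\xi}_{[U_*]} + \tfrac{2}{5}\inp{\Hess R_n([U_*])[\xi]}{\xi}_{[U_*]},
\]
which is exactly the first inequality with $[U] = [U_n]$ — the only case the paper ever invokes (in Lemma \ref{lem:local}). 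So the statement as written is mildly overstated for generic $[U]$, but the proof is genuinely immediate once the interchange is recognized; no modification of Proposition \ref{prop:self-concordance} is needed or possible.
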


\begin{corollary}
\label{cor:lip_risk}
    It holds that
    \begin{equation*}
        \sup_{[U] \in \Gr(d, k)} \sup_{\norm{\xi_1}=1, \norm{\xi_2}=1, \norm{\xi_3}=1} \abs{\nabla^{3} R([U], \xi_1, \xi_2, \xi_3)} \leq 4\norm{\Sigma}_{F},
    \end{equation*}
    and for all $[U] \in \Gr(d, k)$ and $\xi \in T_{[U]}\Gr(d, k)$,
    \begin{equation*}
        \opnorm{P_{\xi, 1}^{-1} \circ \Hess R_n(\Expo_{[U]}(\xi)) \circ P_{\xi, 1} - \Hess R_n([U])} \leq 4\norm{\Sigma_{n}}_{F} \norm{\xi}.
    \end{equation*}
\end{corollary}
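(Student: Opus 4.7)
The proof should be essentially immediate, so the plan is mostly to unpack the reduction carefully. The key observation is that the reconstruction risks and the negative block Rayleigh quotient agree up to an additive constant that is independent of $[U]$. Using the identity $\norm{x - UU^Tx}_2^2 = \norm{x}_2^2 - \Tr(U^Txx^TU)$, I would first write
\begin{equation*}
    R([U]) = \tfrac{1}{2}\Exp\brack{\norm{X}_2^2} - \tfrac{1}{2}\Tr(U^T\Sigma U), \qquad R_n([U]) = \tfrac{1}{2n}\sum_{i=1}^{n}\norm{X_i}_2^2 - \tfrac{1}{2}\Tr(U^T\Sigma_n U),
\end{equation*}
where the first terms on the right-hand sides do not depend on $[U]$. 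Hence, defining $F_\Sigma([V]) \defeq -\tfrac{1}{2}\Tr(V^T\Sigma V)$ and $F_{\Sigma_n}([V]) \defeq -\tfrac{1}{2}\Tr(V^T \Sigma_n V)$, we have $R = F_\Sigma + c$ and $R_n = F_{\Sigma_n} + c_n$ for scalars $c, c_n$.

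The next step is to observe that the higher-order covariant derivatives $\nabla^m$ (for $m \geq 1$) annihilate additive constants, since from the iterative definition in (\ref{eq:higher-order}) a constant function has vanishing derivative of every order. In particular, $\nabla^3 R = \nabla^3 F_\Sigma$ and $\Hess R_n = \Hess F_{\Sigma_n}$, and the associated parallel-transported Hessian operators coincide as well.

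With these two reductions in place, both bounds follow directly from Proposition \ref{prop:lipschitz}: applying the first conclusion with $A = \Sigma$ gives the uniform bound $\abs{\nabla^3 R([U], \xi_1, \xi_2, \xi_3)} \leq 4\norm{\Sigma}_F$ on unit-norm tangent vectors; applying the second conclusion with $A = \Sigma_n$ yields the Hessian-Lipschitz bound $\opnorm{P_{\xi,1}^{-1} \circ \Hess R_n(\Expo_{[U]}(\xi)) \circ P_{\xi,1} - \Hess R_n([U])} \leq 4\norm{\Sigma_n}_F \norm{\xi}$. There is no genuine obstacle: the only thing to check is the constant-shift reduction, and the rest is a direct invocation of the already-proved Proposition \ref{prop:lipschitz}.
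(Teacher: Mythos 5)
Your proposal is correct and matches the paper's own (one-line) argument exactly: it observes that $R$ and $R_n$ differ from the negative block Rayleigh quotients of $\Sigma$ and $\Sigma_n$ by $[U]$-independent constants, which covariant derivatives of order $\geq 1$ annihilate, and then invokes Proposition~\ref{prop:lipschitz} with $A=\Sigma$ and $A=\Sigma_n$. Nothing to add.
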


\begin{lemma}
\label{lem:local}
    On the event that $\theta_{k}([U_{n}], [U_{*}]) < \pi/4$, it holds that
    \begin{equation*}
        \norm{\Log_{[U_*]}([U_n])}_{H}^{2} \leq \frac{25}{4} \cdot \lambda^{-2}_{\min}(\widetilde{H}_{n}) \cdot \norm{\grad R_{n}([U_{*}])}_{H^{-1}}^{2}, 
    \end{equation*}
    where $\widetilde{H}_{n} = H^{-1/2} \circ H_n \circ H^{-1/2}$, $\lambda_{\min}(\widetilde{H}_{n})$ is its smallest eigenvalue, $H = \Hess R([U_*])$, $H_n = \Hess R_n([U_*])$, and $\norm{\xi}_{H}^{2} = \inp{H(\xi)}{\xi}_{[U_*]}$ for $\xi \in T_{[U_{*}]} \Gr(d, k)$.
\end{lemma}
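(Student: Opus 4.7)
The plan is to combine the ERM optimality condition with the lower Taylor bound of Corollary \ref{cor:taylor_risk}. Let $\xi \defeq \Log_{[U_*]}([U_n])$. On the event $\theta_k([U_n],[U_*]) < \pi/4$, the hypothesis of Corollary \ref{cor:taylor_risk} holds, so
\begin{equation*}
    R_n([U_n]) - R_n([U_*]) \geq \inp{\grad R_n([U_*])}{\xi}_{[U_*]} + \tfrac{2}{5} \inp{H_n[\xi]}{\xi}_{[U_*]}.
\end{equation*}
Since $[U_n]$ minimizes $R_n$, the left-hand side is $\leq 0$, which rearranges to
\begin{equation*}
    \tfrac{2}{5} \inp{H_n[\xi]}{\xi}_{[U_*]} \leq -\inp{\grad R_n([U_*])}{\xi}_{[U_*]} \leq \abs{\inp{\grad R_n([U_*])}{\xi}_{[U_*]}}.
\end{equation*}

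The next step is to convert each side into $H$-weighted norms. For the right-hand side I would insert the identity $\mathrm{Id} = H^{-1/2} \circ H^{1/2}$ and apply Cauchy--Schwarz in the Riemannian inner product at $[U_*]$:
\begin{equation*}
    \abs{\inp{\grad R_n([U_*])}{\xi}_{[U_*]}} = \abs{\inp{H^{-1/2}\grad R_n([U_*])}{H^{1/2}[\xi]}_{[U_*]}} \leq \norm{\grad R_n([U_*])}_{H^{-1}} \norm{\xi}_{H}.
\end{equation*}
For the left-hand side, the substitution $\eta = H^{1/2}[\xi]$ gives $\inp{H_n[\xi]}{\xi}_{[U_*]} = \inp{\widetilde{H}_n[\eta]}{\eta}_{[U_*]} \geq \lambda_{\min}(\widetilde{H}_n) \norm{\eta}^2 = \lambda_{\min}(\widetilde{H}_n) \norm{\xi}_H^2$, where the inequality is the variational characterization of the minimum eigenvalue.

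Combining the two bounds yields
\begin{equation*}
    \tfrac{2}{5} \lambda_{\min}(\widetilde{H}_n) \norm{\xi}_H^2 \leq \norm{\grad R_n([U_*])}_{H^{-1}} \norm{\xi}_H,
\end{equation*}
after which dividing by $\norm{\xi}_H$ (trivial if it vanishes) and squaring delivers the claimed bound with constant $(5/2)^2 = 25/4$. There is no genuine obstacle here: the only non-routine element is being careful that $H$ and $H_n$ are linear maps on $T_{[U_*]}\Gr(d,k)$ rather than matrices, so the square roots $H^{1/2}$ and $H^{-1/2}$ must be interpreted via the spectral theorem for self-adjoint operators on this tangent space, which is well-defined because Corollary \ref{cor:inv} guarantees that $H$ is positive definite under the eigengap assumption $\lambda_k > \lambda_{k+1}$.
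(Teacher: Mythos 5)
Your proof is correct and follows essentially the same route as the paper's: the ERM optimality condition combined with the lower Taylor bound from Corollary \ref{cor:taylor_risk}, Cauchy--Schwarz in the $H$-weighted inner product, and the variational characterization of $\lambda_{\min}(\widetilde{H}_n)$. The extra remark about $H^{1/2}$ via the spectral theorem is a nice touch but otherwise the argument matches step for step.
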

\begin{proof}
    Denote $\Log_{[U_*]}([U_{n}])$ by $\xi_n$. Since $[U_{n}]$ minimizes the empirical risk we have
    \begin{equation}
    \label{eq:lem_local_1}
        R_{n}([U_{n}]) - R_{n}([U_{*}]) \leq 0.
    \end{equation}
    On the other hand by Corollary \ref{cor:taylor_risk} we have
    \begin{equation}
    \label{eq:lem_local_2}
        R_{n}([U_{n}]) - R_{n}([U_{*}]) \geq \inp{\grad R_{n}([U_{*}])}{\xi_n} + \frac{2}{5} \norm{\xi_n}_{H_{n}}^{2}.
    \end{equation}
    By the Cauchy-Schwartz inequality
    \begin{equation}
    \label{eq:lem_local_3}
        \inp{\grad R_{n}([U_{*}])}{\xi_n} \geq - \norm{\grad R_{n}([U_{*}])}_{H^{-1}} \cdot \norm{\xi_n}_{H}.
    \end{equation}
    And we also have
    \begin{equation}
    \label{eq:lem_local_4}
        \norm{\xi_n}_{H_{n}}^{2} \geq \bigg(\inf_{\xi \neq 0} \frac{\norm{\xi}^{2}_{H_{n}}}{\norm{\xi}^{2}_{H}}\bigg) \cdot \norm{\xi_n}^{2}_{H} = \inf_{\norm{\xi}=1} \inp{\widetilde{H}_{n}[\xi]}{\xi} \cdot \norm{\xi_n}^{2}_{H} = \lambda_{\mathrm{min}}(\widetilde{H}_{n}) \cdot \norm{\xi_n}_{H}^{2}.
    \end{equation}
    Combining (\ref{eq:lem_local_1}), (\ref{eq:lem_local_2}), (\ref{eq:lem_local_3}), and (\ref{eq:lem_local_4}) we obtain the result.
\end{proof}

\subsection{Matrix identities and perturbation bounds}
\begin{lemma}
\label{lem:davis-kahan}
    Assume that $\lambda_{k} - \lambda_{k+1} > 0$ and that $\opnorm{\Sigma - \Sigma_n} \leq (\lambda_{k} - \lambda_{k+1})/2$. Then
    \begin{equation*}
        \sin\theta_{k}([U_n], [U_{*}]) \leq \frac{2 \opnorm{\Sigma_n - \Sigma}}{\lambda_k - \lambda_{k+1}}.
    \end{equation*}
\end{lemma}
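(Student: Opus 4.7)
The plan is to invoke the classical Davis--Kahan $\sin\Theta$ theorem (see, e.g., \cite{yu2015useful}). By the convention in Section \ref{subsec:grassmann}, the principal angles $\theta_1([U_n], [U_{*}]) \leq \cdots \leq \theta_k([U_n], [U_{*}])$ are arranged in non-decreasing order, since $\cos\theta_j = s_j$ with $s_1 \geq \cdots \geq s_k$. Hence $\theta_k([U_n], [U_{*}])$ is the largest principal angle, and applying the increasing function $\sin$ on $[0, \pi/2]$ gives $\sin\theta_k([U_n], [U_{*}]) = \opnorm{\sin\Theta}$, where $\Theta$ denotes the diagonal matrix of principal angles. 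This is precisely the quantity controlled by Davis--Kahan in its operator-norm form.

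The only preliminary ingredient needed is a spectral gap for the perturbed matrix $\Sigma_n$. By Weyl's inequality, each eigenvalue of $\Sigma_n$ deviates by at most $\opnorm{\Sigma_n - \Sigma}$ from the corresponding eigenvalue of $\Sigma$. Combined with the hypothesis $\opnorm{\Sigma - \Sigma_n} \leq (\lambda_k - \lambda_{k+1})/2$, this gives $\lambda_{k+1}(\Sigma_n) \leq \lambda_{k+1} + (\lambda_k - \lambda_{k+1})/2 = (\lambda_k + \lambda_{k+1})/2$. Consequently, the eigenvalues of $\Sigma_n$ associated with the orthogonal complement of $[U_n]$ all lie in the interval $(-\infty,\, (\lambda_k + \lambda_{k+1})/2]$, while those of $\Sigma$ associated with $[U_{*}]$ all lie in $[\lambda_k, \infty)$. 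These two spectral sets are thus separated by a gap of $\delta \defeq (\lambda_k - \lambda_{k+1})/2 > 0$.

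With this separation in hand, the Davis--Kahan $\sin\Theta$ theorem applied to $\Sigma$ and $\Sigma_n = \Sigma + (\Sigma_n - \Sigma)$ yields
\begin{equation*}
    \sin\theta_k([U_n], [U_{*}]) = \opnorm{\sin\Theta} \leq \frac{\opnorm{\Sigma_n - \Sigma}}{\delta} = \frac{2\, \opnorm{\Sigma_n - \Sigma}}{\lambda_k - \lambda_{k+1}},
\end{equation*}
which is the statement of the lemma. This argument is entirely routine; there is no real obstacle beyond careful bookkeeping to reconcile the paper's ordering of principal angles with the convention used in the invoked form of Davis--Kahan, and to check that the relevant spectral sets lie on opposite sides of the threshold $(\lambda_k + \lambda_{k+1})/2$ with margin $\delta$.
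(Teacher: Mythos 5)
Your proof is correct and follows essentially the same route as the paper: establish a spectral gap via Weyl's inequality, then invoke a Davis--Kahan $\sin\Theta$ theorem. The only substantive difference is which form of Davis--Kahan you use. You apply the classical gap version, where one places the relevant spectral sets in disjoint intervals with margin $\delta = (\lambda_k - \lambda_{k+1})/2$ and obtains $\sin\theta_k \le \opnorm{\Sigma_n - \Sigma}/\delta$ immediately, so the factor of $2$ falls out at once. The paper instead cites the operator-norm version of \citet[Theorem 1]{yu2015useful} in a form whose denominator mixes eigenvalues of $\Sigma$ and $\Sigma_n$, namely $\lambda_k - \lambda_{n,k+1}$, and then needs a second application of Weyl plus the elementary inequality $x/(c-x) \le 2x/c$ for $x \in [0,c/2]$ to recover the stated bound. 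Your route is marginally more direct; the paper's has the minor advantage of leaning on a statement ready-made for statisticians without having to verify the placement of the two spectral sets by hand. Both are equally valid and of comparable length.
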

\begin{proof}
    Let $\delta = \lambda_{k} - \lambda_{k+1}$. Recall that $(u_{n, j})$ is an orthonormal basis of eigenvectors of $\Sigma_n$ ordered non-increasingly according to their corresponding eigenvalues $(\lambda_{n, j})$, with ties broken arbitrarily. On the one hand, we have by inequality (1.63) in \cite{tao2012topics}
    \begin{equation}
    \label{eq:pf_lem1_1}
        \abs{\lambda_{j} - \lambda_{j, n}} \leq \opnorm{\Sigma_n - \Sigma},
    \end{equation}
    for all $j \in [d]$. This implies that
    \begin{equation}
    \label{eq:pf_lem1_2}
        \lambda_{n, k+1} \leq \lambda_{k+1} + \opnorm{\Sigma_n - \Sigma} < \lambda_{k+1} + (\lambda_k - \lambda_{k+1}) \leq \lambda_k,
    \end{equation}
    where the inequality follows by the assumed bound in the lemma. On the other hand, we have by the operator norm version of Theorem 1 in \cite{yu2015useful}\footnote{See the beginning of the second paragraph after the statement of the theorem.}, and taking $r=1$ and $s=k$ in the statement 
    \begin{equation*}
        \sin \theta_k([U_n], [U_*]) \leq \frac{\opnorm{\Sigma_n - \Sigma}}{\lambda_k - \lambda_{n, k+1}},
    \end{equation*}
    where we have used (\ref{eq:pf_lem1_2}) to simplify the denominator appearing in the original statement. Using again (\ref{eq:pf_lem1_1}) to lower bound the denominator we get
    \begin{equation*}
        \sin \theta_k([U_n], [U_*]) \leq \frac{\opnorm{\Sigma_n - \Sigma}}{\delta - \opnorm{\Sigma_n - \Sigma}}.
    \end{equation*}
    Using the inequality $x/(c-x) \leq 2x/c$ valid for $x \in [0, c/2]$ finishes the proof.
\end{proof}

\begin{corollary}
\label{cor:failure_prob}
    Assume that $\lambda_k > \lambda_{k+1}$ and that $\Exp\brack{X_j^4} < \infty$ for all $j \in [d]$. If
    \begin{equation*}
        n \geq \frac{16 (\mathcal{S} + r(n))}{\delta (\lambda_{k} - \lambda_{k+1})^2},
    \end{equation*}
    Then with probability at least $1-\delta/2$
    \begin{equation*}
        \theta_{k}([U_n], [U_{*}]) < \pi/4, 
    \end{equation*}
    where
    \begin{equation*}
        \mathcal{S} \defeq c(d) \cdot \norm{\Exp\brack{(XX^T - \Sigma)^{2}}}_{\mathrm{op}}, \quad\quad r(n) \defeq c^{2}(d) \cdot n^{-1} \Exp\brack{\max\nolimits_{i \in [n]} \, \norm{X_iX_i^T - \Sigma}^{2}_{\mathrm{op}}},
    \end{equation*}
    and $c(d) = 4(1 + 2\ceil{\log(d)})$.
\end{corollary}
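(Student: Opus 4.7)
The plan is to reduce the statement to a concentration bound on the sample covariance matrix in operator norm, which via Lemma \ref{lem:davis-kahan} controls the largest principal angle between $[U_n]$ and $[U_*]$. Specifically, I would first observe that from Lemma \ref{lem:davis-kahan}, whenever $\opnorm{\Sigma_n - \Sigma} \leq (\lambda_k - \lambda_{k+1})/(2\sqrt{2})$ the hypothesis of that lemma is met, and moreover $\sin \theta_k([U_n], [U_*]) \leq 1/\sqrt{2}$, i.e.\ $\theta_k([U_n], [U_*]) \leq \pi/4$. A small tightening (strict inequality) is obtained by using, say, $\opnorm{\Sigma_n - \Sigma} < (\lambda_k - \lambda_{k+1})/(2\sqrt{2})$; the resulting event is almost sure under the bound below. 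It therefore suffices to show that under the hypothesized sample size,
\begin{equation*}
    \Prob\paren*{\opnorm{\Sigma_n - \Sigma}^2 \geq (\lambda_k - \lambda_{k+1})^2 / 8} \leq \delta/2.
\end{equation*}

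The core step, and the one where the constants $c(d)$, $\mathcal{S}$ and $r(n)$ enter, is a second-moment matrix concentration inequality: I would establish
\begin{equation*}
    \Exp\brack*{\opnorm{\Sigma_n - \Sigma}^2} \leq \frac{\mathcal{S} + r(n)}{n}.
\end{equation*}
This is a matrix Rosenthal-type bound for sums of \iid{} mean-zero symmetric random matrices $Y_i = X_i X_i^T - \Sigma$ with only finite second moments, whose operator-norm moment is controlled by the matrix variance $\opnorm{\Exp\brack{Y_1^2}}$ (producing the $\mathcal{S}$ term, with $c(d)$ arising from the standard $\log d$ factor in noncommutative maximal inequalities, e.g.\ \cite{tropp2015introduction, van2017structured}) plus a remainder term reflecting the largest summand (producing the $r(n)$ term through $\Exp\brack{\max_i \opnorm{Y_i}^2}$). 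The finite-fourth-moment assumption on each $X_j$ ensures that both $\mathcal{S}$ and $r(n)$ are finite.

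Given this moment bound, a straightforward application of Markov's inequality yields
\begin{equation*}
    \Prob\paren*{\opnorm{\Sigma_n - \Sigma}^2 \geq (\lambda_k - \lambda_{k+1})^2/8} \leq \frac{8(\mathcal{S} + r(n))}{n (\lambda_k - \lambda_{k+1})^2},
\end{equation*}
and the right-hand side is at most $\delta/2$ exactly when $n$ satisfies the sample size assumption in the statement. Combined with the Davis--Kahan step above, this gives $\theta_k([U_n], [U_*]) < \pi/4$ on an event of probability at least $1-\delta/2$.

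The main obstacle is verifying the precise constants in the matrix Rosenthal-type moment inequality with the correct form of $c(d) = 4(1 + 2\ceil{\log d})$; this is where we must invoke the appropriate noncommutative $L^p$ or tail-integrated inequality for matrix sums. Everything else is bookkeeping: passing from the second-moment bound to a tail bound via Markov, converting an operator-norm perturbation bound on $\Sigma_n - \Sigma$ into a principal-angle bound via Lemma \ref{lem:davis-kahan}, and checking that the required threshold $(\lambda_k - \lambda_{k+1})/(2\sqrt{2})$ is simultaneously below the Davis--Kahan applicability threshold $(\lambda_k - \lambda_{k+1})/2$ and below the angular threshold $\pi/4$.
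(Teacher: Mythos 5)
Your proposal follows exactly the paper's route: a second-moment matrix concentration bound on $\opnorm{\Sigma_n - \Sigma}$ (the paper invokes Theorem~5.1(2) of \cite{tropp2016expected}), then Markov's inequality, then Lemma~\ref{lem:davis-kahan} with the threshold $\opnorm{\Sigma_n - \Sigma} < (\lambda_k - \lambda_{k+1})/(2\sqrt{2})$, which puts $\sin\theta_k < 1/\sqrt 2$ and hence $\theta_k < \pi/4$. The only difference is constant bookkeeping: the paper's proof records the moment bound as $n\,\Exp\brack{\opnorm{\Sigma_n-\Sigma}^2} \le 2\mathcal{S} + 2r(n)$, under which Markov with the squared threshold $(\lambda_k-\lambda_{k+1})^2/8$ would require $n \ge 32(\mathcal{S}+r(n))/\bigl(\delta(\lambda_k-\lambda_{k+1})^2\bigr)$ rather than the stated $16$; your version of the moment bound without the factor of $2$ is the one that makes the corollary's constant come out, so this is at worst a factor-of-two slack in the paper's intermediate display, not a gap in your argument.
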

\begin{proof}
    By the second item of Theorem 5.1 in \cite{tropp2016expected}, we have
    \begin{equation*}
        n \cdot \Exp\brack{\opnorm{\Sigma_n - \Sigma}^{2}} \leq 2 \mathcal{S} + 2r(n)
    \end{equation*}
    Applying Markov's inequality, using Lemma \ref{lem:davis-kahan}, and solving for $n$ yields the result.
\end{proof}

\begin{lemma}
\label{lem:angles}
    Let $U, V \in \St(d, k)$. Let $(\sigma_i)_{i=1}^{d}$ be the singular values of $UU^{T} - VV^T$ ordered non-increasingly. Then for $i \in [2k]$,
    \begin{equation*}
        \sigma_i = \sin \theta_{k-\floor{(i-1)/2}}([U],[V]),
    \end{equation*}
    and the remaining singular values are zero.
\end{lemma}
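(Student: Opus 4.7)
The plan is to diagonalize the problem via the SVD of $U^TV$ and then exhibit $UU^T - VV^T$, in a well-chosen orthonormal basis, as a symmetric matrix that decouples into $k$ independent $2\times 2$ blocks whose eigenvalues can be read off directly as $\pm \sin\theta_j$.

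First, let $U^TV = Y_1 C Y_2^T$ be an SVD with $Y_1, Y_2 \in \R^{k \times k}$ orthogonal and $C = \operatorname{diag}(\cos\theta_1([U],[V]), \dotsc, \cos\theta_k([U],[V]))$. Replacing $U$ by $UY_1$ and $V$ by $VY_2$ leaves $UU^T$ and $VV^T$ unchanged, and preserves both the principal angles between $[U]$ and $[V]$ and the singular values of $UU^T - VV^T$, so I may assume without loss of generality that $U^TV = C$. The decomposition $V = UU^TV + (I - UU^T)V = UC + (I - UU^T)V$ then exhibits the second summand as having columns orthogonal to $\col(U)$ with squared norms $1 - \cos^2\theta_j = \sin^2\theta_j$. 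Hence there exists $W \in \R^{d \times k}$ with orthonormal columns lying in $\col(U)^\perp$ such that $(I - UU^T)V = WS$ with $S = \operatorname{diag}(\sin\theta_1,\dotsc,\sin\theta_k)$. For indices $j$ with $\sin\theta_j = 0$ the corresponding column of $W$ is chosen arbitrarily in $\col(U)^\perp$ subject to orthonormality; this is always possible provided $2k \leq d$, the only regime where the claim is nonvacuous.

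Second, expanding $VV^T = (UC + WS)(UC + WS)^T$ and subtracting from $UU^T = U(C^2 + S^2)U^T$ gives
\begin{equation*}
    UU^T - VV^T = US^2U^T - UCSW^T - WSCU^T - WS^2W^T.
\end{equation*}
Because $[U \mid W] \in \R^{d \times 2k}$ has orthonormal columns and $UU^T - VV^T$ vanishes on the orthogonal complement of their span, in the basis $[U \mid W]$ the restriction of $UU^T - VV^T$ is represented by the $2k \times 2k$ symmetric matrix
\begin{equation*}
    B \defeq \begin{pmatrix} S^2 & -CS \\ -SC & -S^2 \end{pmatrix}.
\end{equation*}
Since $C$ and $S$ are simultaneously diagonal, a permutation of coordinates pairing the $j$-th column of $U$ with the $j$-th column of $W$ turns $B$ into a block-diagonal matrix with $k$ independent $2\times 2$ blocks
\begin{equation*}
    B_j \defeq \begin{pmatrix} \sin^2\theta_j & -\sin\theta_j\cos\theta_j \\ -\sin\theta_j\cos\theta_j & -\sin^2\theta_j \end{pmatrix},
\end{equation*}
each of trace $0$ and determinant $-\sin^2\theta_j$, hence with eigenvalues $\pm\sin\theta_j$.

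Consequently, the eigenvalues of $UU^T - VV^T$ consist of $\pm\sin\theta_j$ for $j \in [k]$ together with $d - 2k$ zeros. Being symmetric, its singular values are the absolute values of its eigenvalues, and recalling that the paper orders principal angles non-decreasingly ($\theta_1 \leq \dotsb \leq \theta_k$), reordering non-increasingly yields two copies of each $\sin\theta_j$ appearing in the sequence $\sin\theta_k \geq \sin\theta_{k-1} \geq \dotsb \geq \sin\theta_1$, followed by zeros. This matches $\sigma_i = \sin\theta_{k - \floor{(i-1)/2}}$ for $i \in [2k]$ with the remainder zero. The one delicate point is the degenerate case where some $\theta_j = 0$: the corresponding column of $W$ has genuine freedom in its choice, but since it enters every formula multiplied by $\sin\theta_j = 0$, the block-diagonalization goes through unchanged and the stated pair of zero eigenvalues is simply absorbed into the trailing zero singular values.
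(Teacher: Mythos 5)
Your proof is correct. The paper establishes this lemma by citation only (Theorem 5.5, Chapter 1 of Stewart and Sun, 1990) rather than giving an argument, and your self-contained derivation via the CS decomposition---reducing to the case $U^TV = C$ diagonal, extracting the orthonormal complement $W$ so that $V = UC + WS$, and block-diagonalizing the resulting $2k \times 2k$ symmetric matrix into $2\times 2$ pieces of trace $0$ and determinant $-\sin^2\theta_j$---is precisely the standard argument underlying that reference, matching the paper's convention $\theta_1 \leq \dotsb \leq \theta_k$.
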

\begin{proof}
    See Theorem 5.5 in Chapter 1 of \cite{stewart1990matrix}.
\end{proof}

\section{Proofs of main results}
\label{apdx:proofs}
\subsection{Proof of Theorem \ref{thm:asymptotics}}
\textbf{Consistency.} Since $\Exp\brack{\norm{X}_2^2} < \infty$, we have by the weak law of large numbers $\Sigma_{n} \xrightarrow{p} \Sigma$, i.e.\ for all $\eps > 0$ 
\begin{equation*}
    \lim_{n \to \infty} \Prob(\opnorm{\Sigma_{n} - \Sigma} \geq \eps) = 0.
\end{equation*}
Let $A_n$ be the event that $\opnorm{\Sigma_n - \Sigma} < (\lambda_{k} - \lambda_{k+1})/2$. By assumption the right-hand side is strictly larger than $0$, so $\lim_{n \to \infty} \Prob(A_{n}) = 1$. On the one hand, by Lemma \ref{lem:davis-kahan}, we have on the event $A_{n}$
\begin{equation*}
    \sin(\theta_{k}([U_{n}], [U_{*}])) \leq \frac{2 \opnorm{\Sigma_{n} - \Sigma}}{\lambda_{k} - \lambda_{k+1}}.
\end{equation*}
On the other we have
\begin{equation*}
    \dist([U_{n}], [U_{*}]) = \bigg(\sum_{j=1}^{k} \theta^{2}_{j}([U_n], [U_{*}])\bigg)^{1/2} \leq \sqrt{k} \cdot  \theta_{k}([U_{n}], [U_{*}]).
\end{equation*} 
Now let $0 < \eps < \sqrt{k}\pi/2$. Using the above two bounds we obtain
\begin{align*}
    \Prob(\dist([U_{n}], [U_{*}]) \geq \eps) &\leq \Prob(\theta_{k}([U_{n}], [U_{*}]) \geq \eps/\sqrt{k}) \\
    &\leq \Prob(\brace{\theta_{k}([U_{n}], [U_{*}]) \geq \eps/\sqrt{k}} \cap A_{n}) + \Prob(A_{n}^{c}) \\ 
    &\leq \Prob(\opnorm{\Sigma_n - \Sigma} \geq (\lambda_k - \lambda_{k+1}) \cdot \sin(\eps/\sqrt{k})/2) + \Prob(A_{n}^{c})
\end{align*}
and both terms go to zero as $n \to \infty$.

\textbf{$\sqrt{n}$-consistency.} For the rest of the proof, let $\xi_n = \Log_{[U_*]}([U_{*}])$ when well defined, and $0$ otherwise. Let $H = \Hess R([U_*])$ and $H_{n} = \Hess R_{n}([U_*])$, and for a positive semi-definite linear map $L: T_{[U_*]}\Gr(d, k) \to T_{[U_*]}\Gr(d, k)$, let $\norm{\xi}_{L}^{2} = \inp{L(\xi)}{\xi}$ denote the (squared) semi-norm it induces, for $\xi \in T_{[U_*]}\Gr(d, k)$. We note that under the eigengap assumption of the theorem, $\xi \mapsto \norm{\xi}_{H}$ is a norm since $H$ is positive-definite by Corollary \ref{cor:inv}.

Let $A_{n}$ be the event that $\theta_k([U_{n}] , [U_{*}]) < \pi/4$. By the consistency result, $\lim_{n \to \infty} \Prob(A_n) = 1$. Now on this event we have by Lemma \ref{lem:local}
\begin{equation*}
    \norm{\xi_n}_{H} \leq \frac{5}{2} \cdot \lambda^{-1}_{\min}(\widetilde{H}_{n}) \cdot \norm{\grad R_{n}([U_{*}])}_{H^{-1}}, 
\end{equation*}
where $\widetilde{H}_{n} = H^{-1/2} \circ H_n \circ H^{-1/2}$ and $\lambda_{\min}(\widetilde{H}_{n})$ is its smallest eigenvalue. 

On the one hand, by the weak law of large numbers $\widetilde{H}_{n} \xrightarrow{p} \mathrm{Id}$, and by the continuous mapping theorem $\lambda^{-1}_{\min}(\widetilde{H}_{n}) \xrightarrow{p} 1$. Let $B_{n}$ be the event that $\lambda^{-1}_{\min}(\widetilde{H}_{n}) \leq 2$, which thus satisfies $\lim_{n \to \infty} \Prob(B_{n}) = 1$.

On the other hand, we have by Lemma \ref{lem:var_emp_grad}
\begin{equation*}
    \Exp\brack{\norm{\grad R_{n}([U_*])}_{H^{-1}}^2} = n^{-1} \Exp\brack{\norm{\grad \ell([U_{*}], X)}_{H^{-1}}^{2}},
\end{equation*}
and by the moment assumption of the theorem and Lemma \ref{lem:var_grad} the expectation on the right-hand side is finite. 

Therefore we have for any $\eps > 0$ and $\alpha \in [0, 1/2)$, using the above two displays and Markov's inequality
\begin{align*}
    \Prob(\norm{\xi_n}_{H} \geq \frac{\eps}{n^{\alpha}}) &\leq \Prob(\brace{\norm{\xi_n}_{H} \geq \frac{\eps}{n^{\alpha}}} \cap (A_n \cap B_n)) + \Prob(A_{n}^{c}) + \Prob(B_{n}^{c}) \\
    &\leq \Prob(\norm{\grad R_{n}([U_{*}])}_{H^{-1}} \geq \frac{\eps}{5 n^{\alpha}}) + \Prob(A_{n}^{c}) + \Prob(B_{n}^{c}) \\
    &\leq 25 \cdot n^{2\alpha - 1} \cdot \eps^{-2} \cdot \Exp\brack{\norm{\grad \ell([U_{*}], X)}_{H^{-1}}^{2}} + \Prob(A_n^c) + \Prob(B_{n}^{c})
\end{align*}
and all three terms go to zero as $n \to \infty$ so that for all $\eps > 0$ and $\alpha \in [0, 1/2)$
\begin{equation}
\label{eq:root_n_cons}
    \lim_{n \to \infty} \Prob(\norm{\xi_n}_{H} \geq \frac{\eps}{n^{\alpha}}) = 0.
\end{equation}

\textbf{Asymptotic normality.} Let $A_{n}$ be the event that $\theta_k([U_{n}] , [U_{*}]) < \pi/4$. By the consistency result, $\lim_{n \to \infty} \Prob(A_n) = 1$. 

Since $[U_{n}]$ minimizes the empirical reconstruction risk, it holds that $\grad R_{n}([U_{n}]) = 0$. On the event $A_{n}$, we have by the Taylor expansion (\ref{eq:taylor_grad})
\begin{equation*}
    0 = \grad R_{n}([U_{n}]) = \grad R_{n}([U_{*}]) + H_{n}[\xi_n] + E_{n},
\end{equation*}
where we used that the parallel transport map introduced at the beginning of Appendix \ref{apdx:grassmann} sends $0$ to $0$, and where the term $E_{n}$ is given by
\begin{equation*}
    E_{n} = \int_{0}^{1} \brace{P_{\xi_n, s}^{-1} \circ \Hess R_{n}(\Expo_{[U_{*}]}(s\xi_n)) \circ P_{\xi_n, s} - \Hess R_n([U_*])}[\xi_n] \, \mathrm{d}s.
\end{equation*}
By the weak law of large numbers $H_n \xrightarrow{p} H$ which is invertible by the eigengap assumption of the theorem and Corollary \ref{cor:inv}. The event $B_{n}$ on which $H_{n}$ is invertible thus satisfies $\lim_{n \to \infty} \Prob(B_n) = 1$. On the event $A_n \cap B_n$ we thus have, rearranging the first display and scaling by $\sqrt{n}$
\begin{equation*}
    \sqrt{n} \cdot \xi_n = -H_n^{-1}[\sqrt{n} \cdot \grad R_{n}([U_{*}])] - H_{n}^{-1}[\sqrt{n} \cdot E_{n}]
\end{equation*}
Now we claim that $\sqrt{n} \cdot E_{n} \xrightarrow{p} 0$. Indeed, we have by Corollary \ref{cor:lip_risk}
\begin{align*}
    \norm{E_n} &\leq \norm{\xi_n} \cdot \int_{0}^{1} \opnorm{P_{s\xi_n, 1}^{-1} \circ \Hess R_{n}(\Expo_{[U_{*}]}(s\xi_n)) \circ P_{s\xi_n, 1} - \Hess R_n([U_*])} \mathrm{d}s \\
    &\leq 2 \cdot \norm{\xi_n}^{2} \cdot \norm{\Sigma_n}_{F}.
\end{align*}
where in the first line we used the easy to check identity $P_{\xi, s} = P_{s\xi, 1}$. Once again using the weak law of large numbers and the continuous mapping theorem we obtain $\norm{\Sigma_n}_{F} \xrightarrow{p} \norm{\Sigma}_F$. Thus using (\ref{eq:root_n_cons}) we obtain $\sqrt{n} \cdot E_{n} \xrightarrow{p} 0$. The asymptotic normality statement in the Theorem then follows from Lemma \ref{lem:clt}, Slutsky's theorem, and the fact that the events $A_{n}^{c}$ and $B_{n}^{c}$ have vanishing probability as $n \to \infty$.

\textbf{Excess Risk.}  Let $A_{n}$ be the event that $\theta_k([U_{n}] , [U_{*}]) < \pi/4$. By the consistency result, $\lim_{n \to \infty} \Prob(A_n) = 1$. By the Taylor expansion (\ref{eq:taylor_fun}), we have on this event
\begin{equation*}
    n \cdot [R([U_n]) - R([U_{*}])] = \frac{1}{2}\norm{\sqrt{n} \cdot \xi_n}_{H}^{2} + n \cdot E_{n},
\end{equation*}
where the error term $E_{n}$ is given by
\begin{equation*}
    E_{n} = \frac{s^3}{6} \nabla^3 R(\Expo_{[U_{*}]}(\xi_n), P_{s\xi_n, 1}(\xi_n), P_{s\xi_n, 1}(\xi_n), P_{s\xi_n, 1}(\xi_n)),
\end{equation*}
for some $s \in [0, 1]$, and where we again used the easy to check identity $P_{\xi, s} = P_{s\xi, 1}$. We claim that $n \cdot E_n \xrightarrow{p} 0$. Indeed by Corollary \ref{cor:lip_risk}, we have
\begin{equation*}
    \norm{E_{n}} \leq \norm{\Sigma}_{F} \cdot \norm{P_{s\xi_n, 1}(\xi_n)}^{3} = \norm{\Sigma}_{F} \cdot \norm{\xi_n}^{3},
\end{equation*}
where we used in the equality that the parallel transport map is an isometry as described in the beginning of Appendix \ref{apdx:grassmann}. Thus using (\ref{eq:root_n_cons}) we obtain $n \cdot E_{n} \xrightarrow{p} 0$. Finally, we have, using the asymptotic normality statement in the theorem, proven above, the continuous mapping theorem, and the explicit description of $H^{1/2}$ from Lemma \ref{lem:eigen} that $\lift_{U_{*}}(H^{1/2}[\sqrt{n} \cdot \xi_n])$ converges in distribution $U_{*}^{\perp} H$ where $H$ is the Gaussian matrix in the statement of the theorem. The result then follows from an application of Slutsky's theorem, the fact that the event $A_{n}^{c}$ has vanishing probability as $n \to \infty$, and that $\norm{U_{*}^{\perp}C}_{F} = \norm{C}_{F}$ for any $(d-k) \times k$ matrix $C$.

\subsection{Proof of Remark \ref{rem:projectors}}
We have on the one hand by Lemma \ref{lem:angles} that
\begin{equation}
\label{eq:rem1}
    \norm{U_{n}U_{n}^{T} - U_{*}U_{*}^{T}}_{p} = 2^{1/p} \cdot \big(\sum_{j=1}^{k} \sin^{p}(\theta_j)\big)^{1/p}, 
\end{equation}
where we have shortened $\theta_j = \theta_j([U_{n}], [U_{*}])$. Recall from Section \ref{sec:setup} that the singular values of $\Delta_{n} = \lift_{U_*}(\Log_{[U_*]}([U_n]))$ are the principal angles between $[U_{n}]$ and $[U_{*}]$. Define the function $\varphi: \R^{d \times k} \to \R^{d \times k}$ as follows. For a matrix $A \in \R^{d \times k}$, let $A = PSQ^T$ be a SVD of $A$. Then we define $\varphi(A) = P \sin(S) Q^{T}$ where $\sin$ is applied element-wise to the singular values. Now
\begin{equation*}
    \varphi(A) - \varphi(0)  - A = P\sin(S)Q^{T} - PSQ^T = P(\sin(S) - S)Q^T,
\end{equation*}
Hence
\begin{equation*}
    \lim_{\opnorm{A} \to 0} \frac{\opnorm{\varphi(A) - \varphi(0) - A}}{\opnorm{A}} \leq \lim_{\opnorm{A} \to 0} \frac{s_{\mathrm{max}} - \sin(s_{\mathrm{max}})}{s_{\mathrm{max}}} = 0 
\end{equation*}
where $s_{\mathrm{max}}$ is the maximum singular value of $A$, and we used the fact that $(x-\sin(x))/x \to 0$ as $x \to 0$.
Therefore $\varphi$ is differentiable at $0$, and its derivative there is the identity map, so by the delta method \cite[e.g.][Theorem 3.1]{van2000asymptotic}) and the asymptotic normality result in Theorem \ref{thm:asymptotics} we obtain
\begin{equation*}
    \sqrt{n} \cdot \varphi(\Delta_n) \xrightarrow{d} U_{*}^{\perp} G
\end{equation*}
Applying the continuous mapping theorem with the map $A \mapsto 2^{1/p}\norm{A}_{p}$, using (\ref{eq:rem1}), and noting that $\norm{U_{*}^{\perp} G}_{p} = \norm{G}_{p}$ since $U_{*}^{\perp} \in \St(d, d-k)$ finishes the proof.

\subsection{Proof of Theorem \ref{thm:finite}}
Let $\xi_n = \Log_{[U_*]}([U_{*}])$ when well defined, and $0$ otherwise. Let $H = \Hess R([U_*])$ and $H_{n} = \Hess R_{n}([U_*])$, and for a positive semi-definite linear map $L: T_{[U_*]}\Gr(d, k) \to T_{[U_*]}\Gr(d, k)$, let $\norm{\xi}_{L}^{2} = \inp{L(\xi)}{\xi}$ denote the (squared) semi-norm it induces, for $\xi \in T_{[U_*]}\Gr(d, k)$.

Let $A_{n}$ be the event that $\theta_k([U_{n}], [U_{*}]) < \pi/4$. Under the sample size restriction of the theorem, and in particular the third term in this restriction, this event happens with probability at least $1-\delta/2$ by Corollary \ref{cor:failure_prob}. 

Now on this event, we have by Lemma \ref{lem:local} that the following inequality holds
\begin{equation*}
    \norm{\xi_n}_{H} \leq \frac{5}{2} \cdot \lambda^{-1}_{\min}(\widetilde{H}_{n}) \cdot \norm{\grad R_{n}([U_{*}])}_{H^{-1}}, 
\end{equation*}
where $\widetilde{H}_{n} = H^{-1/2} \circ H_{n} \circ H^{-1/2}$.

Let $B_{n}$ be the event that $\lambda_{\mathrm{min}}(\widetilde{H}_{n}) \geq 1/2$. Under the sample size restriction of the theorem, and in particular the first two terms in this restriction, this event happens with probability at least $1-\delta/4$ by Lemmas \ref{lem:emp_hess_conv} and \ref{lem:explicit}.

Now by Lemmas \ref{lem:var_emp_grad} and \ref{lem:var_grad} and Markov's inequality we also have that on an event $C_{n}$ that holds with probability at least $1-\delta/4$
\begin{equation*}
    \norm{\grad R_{n}([U_{*}])}_{H^{-1}}^{2} \leq \frac{4}{n \cdot \delta} \cdot \sum_{i=1}^{d-k}\sum_{j=1}^{k} \frac{\Exp\brack{\inp{u_{k+i}}{X}^{2} \inp{u_{j}}{X}^{2}}}{\lambda_j - \lambda_{k+i}}.
\end{equation*}
Therefore, on the event $A_{n} \cap B_{n} \cap C_{n}$ that holds with probability at least $1-\delta$, we have
\begin{equation*}
    \norm{\xi_n}_{H}^{2} \leq \frac{100}{n \cdot \delta} \sum_{i=1}^{d-k}\sum_{j=1}^{k} \frac{\Exp\brack{\inp{u_{k+i}}{X}^{2} \inp{u_{j}}{X}^{2}}}{\lambda_j - \lambda_{k+i}}
\end{equation*}
and on this same event, we have by Corollary \ref{cor:taylor_risk}, noting that on this event $\xi_n = \Log_{[U_*]}([U_{*}])$
\begin{equation*}
    R([U_{n}]) - R([U_*]) \leq \frac{75}{n \cdot \delta}  \sum_{i=1}^{d-k}\sum_{j=1}^{k} \frac{\Exp\brack{\inp{u_{k+i}}{X}^{2} \inp{u_{j}}{X}^{2}}}{\lambda_j - \lambda_{k+i}}.
\end{equation*}
which concludes the proof.

\subsection{Proof Sketch for Example \ref{ex:gaussian}}
When $X \sim \mathcal{N}(0, \Sigma)$, we have
\begin{equation*}
    \Gamma_{jrsp} = \begin{cases*}
        3 \lambda_j^2 & if $j=s=r=p$ \\
        \lambda_j \lambda_r & if $j=p \neq r=s$ or $j=s \neq r=p$ \\
        \lambda_j \lambda_{s} & if $j=r \neq s=p$ \\
        0 & otherwise
    \end{cases*}
\end{equation*}
and 
\begin{equation*}
    \Lambda_{tsqr} = \begin{cases*}
        \lambda_{k+t} \lambda_{s} & if $t = q$ and $s = r$ \\
        0 & otherwise
    \end{cases*}
\end{equation*}
and finally
\begin{equation*}
    \Omega_{itql} = \begin{cases*}
        3 \lambda_{k+i}^2 & if $i=t=q=l$ \\
        \lambda_{k+i} \lambda_{k+t} & if $i=q \neq t=l$ or $i=l \neq t=q$ \\
        \lambda_{k+i} \lambda_{k+1q} & if $i=t \neq q=l$ \\
        0 & otherwise
    \end{cases*}
\end{equation*}
Using these identities in the sums in Remark \ref{rem:var_params} and simplifying shows that the maximization problem becomes one over the simplex which can be solved directly for $\mathcal{V}$ or very well-approximated for $\nu$.

\subsection{Remarks on omitted proofs}
\paragraph{Quantile bounds in Corollary \ref{cor:excess_risk} and Remark \ref{rem:projectors}.} The upper bounds in these statements are a direct consequence of the Gaussian concentration inequality \cite[e.g.][Theorem 5.6]{boucheron2013concentration}. For the lower bounds and a step by step derivation, see for example \cite[][Appendix A]{el2024minimax}. Note also that the quantile bounds in Example \ref{ex:spiked} are a direct consequence of those in Corollary \ref{cor:excess_risk} and Remark \ref{rem:projectors}.

\paragraph{Claim in Remark \ref{rem:generalized}.} As is clear from the Proof of Theorem \ref{thm:asymptotics}, the key properties leading to it are the self-concordance and Hessian Lipschitzness of the empirical and populations reconstruction risks, i.e.\ Corollaries \ref{cor:taylor_risk} and \ref{cor:lip_risk}. These are themselves derived from the more general results we obtained in Propositions \ref{prop:self-concordance} and  \ref{prop:lipschitz} that holds for the negative block Rayleigh quotient. These can be used directly with minor adjustments to prove the claim we made in Remark \ref{rem:generalized}. Furthermore, up to generalizing the parameters appearing in the sample size restriction, one may use Propositions \ref{prop:self-concordance} and \ref{prop:lipschitz} to obtain a more general version of Theorem \ref{thm:finite} that holds in the setting described in Remark \ref{rem:generalized}.

\end{document}